\newtheorem{teo}{Theorem}[section]
\newtheorem{cor}{Corollary}[section]
\newtheorem{lem}{Lemma}[section]
\newtheorem{rem}{Remark}[section]
\newtheorem{defin}{Definition}[section]
\def\Z{{\mathbb Z}}
\def\R{{\mathbb R}}
\def\C{{\mathbb C}}
\def\O{{\mathcal O}}
\def\G{{\mathcal G}}
\def\M{{\mathcal M}}
\def\L{{\mathcal L}}
\def\g{\gamma}
\def\S{\Sigma}
\def\menos{\hspace{-0.05cm}-\hspace{-0.05cm}}
\def\wt{\widetilde}
\def\b{\beta}
\def\a{\alpha}
\def\Im{{\rm Im}\,}
\begin{document}
\title[Limit points of the branch locus of $\M_g$. ]
{Limit points of the branch locus of $\M_g$.  }

\author{Raquel D\'{\i}az}
\address{Departamento de Geometr\'{\i}a y Topolog\'{\i}a. Facultad de Ciencias Matem\'aticas. Universidad Complutense de Madrid. Espa\~na}
\email{radiaz@mat.ucm.es}

\author{V\'ictor Gonz\'alez-Aguilera}
\address{Departamento de Matem\'atica. Universidad T\'ecnica Federico Santa Mar\'{\i}a. Valpara\'{\i}so, Chile}
\email{victor.gonzalez@usm.cl}
\thanks{{\it 2010 Mathematics Subject
    Classification}: Primary 32G15; Secondary 14H10.\\
  \mbox{\hspace{11pt}}{\it Key words}: Moduli space, stratification, noded Riemann surfaces.\\
  \mbox{\hspace{11pt}}
The first author was partially supported by the Project MTM2012-31973. The second author was partially supported by Project   PIA, ACT 1415}


\begin{abstract}
Let $\mathcal{M}_{g}$ be the moduli space of compact connected hyperbolic  surfaces of genus $g\geq2$, and  ${\mathcal B}_g \subset {\mathcal M}_{g} $ its branch locus. Let  $\widehat{{\mathcal{M}}_{g}}$ be  the   Deligne-Mumford compactification of the moduli space of smooth, complete, connected surfaces of genus $g\geq 2$ over $\C$. The branch locus  ${\mathcal B}_g$ is stratified by smooth locally closed equisymmetric strata, where a stratum consists of  hyperbolic surfaces with equivalent action of their preserving orientation isometry group. Any stratum can be determined by a certain epimorphism $\Phi$. In this  paper, for any of these strata, we describe the topological type  of its limits points in $\widehat{\M}_g$ in terms of $\Phi$.   We apply our method to the $2$-complex dimensional stratum corresponding to the  pyramidal hyperbolic surfaces.
\end{abstract}
\maketitle

\maketitle
\section{Introduction}

Let $\mathcal{M}_{g}$ be the moduli space of Riemann surfaces of genus $g\geq2$, i.e., the space of complex structures on a compact connected topological surface $S$ up to isomorphism or, equivalently, the space of   hyperbolic surfaces of genus $g$ up to preserving orientation isometries. Still a third way of seeing $\mathcal{M}_{g}$ is as the moduli space of smooth, complete, connected curves of genus $g\geq 2$   defined over $\mathbb C$. The moduli space can be considered as the quotient of the Teichm\"uller space ${\mathcal T}_g$ by the action of  the modular group $\mbox{Mod}_g$  (or the mapping class group). The branch locus of the map ${\mathcal T}_g \rightarrow  {\mathcal T}_g/\mbox{Mod}_g \cong  {\mathcal M}_g$ is denoted by ${\mathcal B}_g$. Any point   $S \in {\mathcal B}_g$ has a nontrivial isotropy subgroup $G_S$ under the action of $\mbox{Mod}_g$ on  ${\mathcal T}_g$   and $G_S$ is isomorphic to the biholomorphic group of automorphisms of $S$. The points of  ${\mathcal B}_g$ can be organized in strata corresponding to surfaces with   isomorphic  non trivial automorphism groups   with the same topological action   on the surface. In this way the moduli space ${\mathcal M}_g$ is stratified  by Broughton into smooth locally closed strata \cite{Broughton}.
 
In \cite{abikoff}, Abikoff introduced the augmented Teichm\"uller space $\widehat{{\mathcal T}_g}$, by  adding
marked Riemann surfaces with nodes. The mapping class group acts on the augmented Teichm\"uller space, giving as  quotient the {\it augmented moduli space} $\widehat{\M_g}$. 
 
Earlier than that, Deligne-Mumford \cite{deligne} had compactified the moduli space by adding non-smooth stable curves. Their compactification is an  irreducible projective complex variety of dimension  $3g-3$, which contains $\mathcal{M}_{g}$ as a dense open subvariety. Harvey \cite{harvey} proved that the augmented moduli space and the Deligne-Mumford compactification are homeomorphic. More recently, Hubbard and Koch \cite{Hubbard} give an analytic structure to the augmented moduli space, making it isomorphic (in the analytic category) to the Deligne-Mumford compactification.

 The different topological types of the Riemann surfaces with nodes attached to $\M_g$ provide a stratification of  
 $\widehat{{\mathcal{M}}_{g}}$, the so called stratification by topological type. Each stratum is encoded combinatorially in terms of the weighted dual graph associated to the corresponding Riemann surface with nodes \cite{miranda}.

   

The purpose of this paper is the following.    
 Consider a  topological action of a group $G$ on a surface   $S$  of genus $g\geq 2$ by preserving orientation homeomorphisms. The action  induces  a branched covering  $p: S \rightarrow S/G\cong {\mathcal O}$ 
 and an epimorphism  $\Phi: {\pi}_1(\mathcal O,*) \rightarrow G$. The topological class of this action produces an equisymmetric stratum ${\mathcal L}=\M(g,\O,\Phi)$ of $\M_g$ (see Section \ref{sec:equisymmetric}). In this paper we describe the limit points of the stratum $\mathcal L$ in $\partial \M_g= \widehat{\M_g}\menos\M_g$, i.e., 
the intersection  $\widehat{\mathcal{L}}\cap \partial \M_g$, where $\widehat{\mathcal{L}}$ denotes the closure of $\mathcal{L}$ in $\widehat{\M_g}$. The main part is the combinatorial type of the strata in $\widehat{\mathcal{L}}$, and this is obtained in Theorem \ref{thm:Combinatotial}. Some topological properties of these strata are obtained in Theorem \ref{thm:Existence}. 
Finally, in order to illustrate how our results may be used, we work out  the case of the $2$-complex dimensional equisymmetric stratum of the pyramidal action of $D_n$ on surfaces of genus $n$.
Our methods for the proofs arise from a natural topological  and  hyperbolic point of view and are mainly based in the study of the $G$-invariant  admissible system of multicurves of the quotient orbifold $S/G$. 


Let us review some previous results related to the content of this paper. 
Let   $\mbox{Sing}\,{\mathcal M}_{g}\subset {\mathcal M}_g$ be  the sublocus of smooth, complete, connected curves of genus $g\geq 3$ over $\mathbb C$   with non trivial automorphism group. The irreducible components of the subvariety  $\mbox{Sing}\,{\mathcal M}_{g} $   are closely related to the stratification of ${\mathcal M}_g$ and have been characterized in \cite{cornalba}.  
The properties  of ${\mathcal B}_g$, mainly its topology and connectivity have been characterized in \cite{Broughton}, \cite{bartolini}, \cite{costa0}. 
The admissible degeneration of hyperelliptic  and trielliptic curves have been characterized in \cite{archer}. 
From a hyperbolic point of view, where a geodesic multicurve is pinched to length $0$, the topological monodromies of singular fibres of curves of genus three have been classified in \cite{ashikaga}.  In \cite{costa1}, it is  proved that the set of trigonal surfaces is connected in $\widehat{{\mathcal M}_g}$ and for $p\geq 11$ prime  the branch locus $B_{p-1}$ is disconnected in $\widehat{{\mathcal M}_{p-1}}$.  For any admissible group $G$, the nodal Riemann surfaces that are limits  of a $1$-dimensional equisymmetric stratum with the fixed signature $s=(0;[2,2,2,2,n])$, $n\geq3$ are characterized  in \cite{costa2}.

The content of the paper is organized as follows. In Section $2$ we give definitions, introduce notation and gather enough well known results together to provide the support of our statements and proofs.  In Sections $3$ and $4$ our main results are stated and proved. In Section $5$, as an application of the main theorem we work out  a complete description of the topological type of the limit points  of the $2$-dimensional equisymmetric  stratum corresponding to the  pyramidal Riemann surfaces \cite{victor}.

\section{Preliminaries}

\subsection{Orbifolds}
\label{sec:Orbifolds}
 In this section we will recall some notations and properties of the orbifolds that we will use in   the paper.  For the general theory about  orbifolds   see \cite{porti}.

We will denote an orbifold  by $\O$, and its underlying space by $|\O|$  (although, if there is no confusion, we will use $\O$ both for the orbifold and its underlying space).   The  local (or isotropy) group   of $\O$ at a point $x\in \O$ is denoted by $\Gamma_x$. The set of singular  points of $\O$ (points whose local groups are non-trivial) is denoted by ${\rm Sing}\,\O$.



Next, we will list   the type of orbifolds that will appear in the paper and some of theirs properties.

(I) Let  $S$ be a closed orientable surface  of genus $g$  and $H$ a finite subgroup of orientation preserving homeomorphisms. Then   the quotient $\O= S/H$ is a  closed (compact, without boundary) 2-orbifold   with a finite number of singular points, and the local groups $\Gamma_x$  at $x\in \O$   are cyclic groups of rotations. We call $x$ a {\it cone point} of order $m$ if  $\Gamma_x$ is  cyclic of order $m$.   The quotient map $p\colon S\to S/H$ is a regular branched covering.

The {\it (orbifold) fundamental group } $\pi_1(\O,*)$ of this kind of orbifolds is isomorphic to the quotient of $\pi_1(|\O|\menos{\rm Sing}\,\O)$ by  the 
  normal subgroup  generated by $\mu_x^{m_x}$ where $\mu_x$ is a loop in  $\pi_1(|\O|\menos{\rm Sing}\,\O)$ surrounding a disc which contains just the cone point $x$, and $m_x$ is the order of $x$. We remark that if $\a$ is a path in $\O$ from $*$ to a cone point  $P$  (and containing no other cone points), then   the loop $\a\a^{-1}$  is homotopic, in the orbifold fundamental group $\pi_1(\O,*)$,  to a path that goes along $\a$ until arriving to a small disc $D$ containing just the cone point  $P$, surrounds $D$, and comes back to $*$ along $\a^{-1}$. 

(II)  The unit interval  $I=\R/K$, where $K$ is the group generated by the two reflections $r_1(x)=-x, r_2(x)=-x+2$, is a 1-dimensional orbifold with two singular points, $\{0,1\}$.   
By definition, its fundamental group is  the group $K$, which is isomorphic to $\Z_2*\Z_2$.  It can also be interpreted as  the space of homotopy classes of loops $\pi_1(I,\frac{1}{2})$   generated by the loops $a,b$ where $a$ starts at $\frac{1}{2}$, goes until $0$ and comes back, and  $b$ starts at $\frac{1}{2}$, goes until $1$ and comes back, and subject to  the relations $a^2=b^2=1$.
\\
If $\O$ is a  2-orbifold, a simple  arc $\g$ in $\O$ joining two cone points of $\O$ of order 2 (and containing no other cone point)  inherits a structure of orbifold homeomorphic to $I$. We denote by $\g^a, \g^b$ the loops in $\g$ corresponding to the loops $a,b$ of $I$ by a homeomorphism.

(III) Let $\O$ be a 2-orbifold with ${\rm Sing }\,\O$  consisting only of  cone points. Let $\S=\{\g_1,\dots,\g_m\}$ be a collection of simple closed curves and simple arcs joining cone points of order 2 in  $\O$ and let $\O\menos\S$ denote $\O\menos\cup_{i=1}^m \g_i$. Then each connected component $\O_j$ of $\O\menos\S$ is an open (without boundary, non compact) suborbifold of $\O$ and its metric completion  $\O_j^c$  is an orbifold with boundary whose singular points are cone points. Here, for the metric completion, we are assuming any metric in $\O$ compatible with its topology. Actually, in the cases we will deal  with, the orbifolds  $\O_j$ will be   hyperbolic, and   we can consider the hyperbolic metric on $\O_j$, while the curves $\g_i$ can be considered to be geodesics.

  In what follows, a 2-orbifold will always be as in cases (I) or (III), that is,   orbifolds with orientable underlying space whose singular locus consists just of cone points.  The {\it signature } of $\O$ is $(\tau,c;m_1,\dots,m_k)$, where $\tau$ is the genus of $|\O|$, $c$ the number of boundary components of $\O^c$ and $\O$ has $k$ cone points of orders $m_1,\dots, m_k$. 
The Euler characteristic of an orbifold with signature   $(\tau,c;m_1,\dots,m_k)$ is defined to be 
  $$
  \chi(\O)=\chi(|\O|)-\sum_{i=1}^k(1-\frac{1}{m_i})=2-2\tau-c-\sum_{i=1}^k(1-\frac{1}{m_i}).
  $$ 
In particular, there is a finite list of  orbifolds with non-negative Euler characteristic.

  Notice that if $H$ is a group acting on an compact orientable surface $S$, giving as quotient an orbifold of signature $(\tau;m_1,\dots,m_k)$,  then the Riemann-Hurwitz formula can be expressed just as 
  $$\chi(S)=|G|\chi(\O).$$

A 2-orbifold admits a hyperbolic structure   if its interior is homeomorphic to the quotient of the hyperbolic plane by a Fuchsian group. 
It is well known that a closed  orbifold is hyperbolic  if and only if its Euler characteristic is negative. If the orbifold has boundary, the hyperbolic structure in its interior can be chosen to be of finite area.

 A homeomorphism of orbifolds is a   homeomorphism of the underlying spaces which takes cone points to cone points preserving their orders.

\begin{defin}
  Let $\O$ be a closed 2-orbifold of genus $g\geq 2$. An {\it (admissible) multicurve} in   $\O$ is a collection $\Sigma =\{\g_1,\dots, \g_k\}$ of disjoint   simple closed curves or simple arcs joining cone points of order 2 in the complement of the singular locus of $\O$   such that:
\begin{itemize}
\item[i)]  the $\g_i$ are homotopically different    in $\pi_1(\O\setminus Sing(\O))$;
\item[ii)]  none of them is homotopically trivial;
\item[iii)] none of them surrounds only one cone point;
\item[iv)] none of them bounds a disc with exactly two cone points of order 2.  
\end{itemize} 
\end{defin} 
We remark that the above conditions are equivalent to requiring that (the  metric completion of) each component of $\O\menos\S$  has negative Euler characteristic.  
  
The following lemma explains the reason why we only admit arcs joining cone points of order 2 (and not of other orders) in an admissible multicurve. 
  
 \begin{lem}\label{lem:multicurves}
 Let $S$ be a compact hyperbolic surface, $H$ a   subgroup of preserving orientation isometries  of $S$ with $\O=S/H$ the quotient orbifold and $p\colon S\to \O$ the associated branched covering.  
 Then, the preimage $p^{-1}(\S)$ of a multicurve $\S$ in $\O$ is a multicurve in $\S$. Conversely, if $\Gamma$ is a multicurve in $S$ invariant under $H$, then $p(\Gamma)$ is a multicurve in $\O$. 
 
 \end{lem}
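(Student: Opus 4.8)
The plan is to analyze the two directions separately, working locally near the singular locus and using the characterization (stated right after the definition of admissible multicurve) that admissibility is equivalent to each component of the cut orbifold having negative Euler characteristic. Throughout I will use the standard fact that $p\colon S\to\O$ is a regular branched covering and that, since $H$ acts by isometries, we may take $\S$ (or $\Gamma$) to consist of geodesics, so $p^{-1}(\S)$ (resp. $p(\Gamma)$) is again a geodesic (multi)curve; in particular the components are simple and pairwise disjoint once we know they are embedded. The key point to watch is what happens to arcs joining order-$2$ cone points and to the endpoints of such arcs.

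\smallskip

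\textbf{Direction 1: $p^{-1}(\S)$ is a multicurve in $S$.} First I would describe $p^{-1}(\g_i)$ for a single component $\g_i$ of $\S$. If $\g_i$ is a simple closed curve in $\O\menos\mathrm{Sing}\,\O$, then $p^{-1}(\g_i)$ is a disjoint union of simple closed curves in $S$ (a covering of a circle), and they are pairwise non-homotopic and non-trivial because $p$ is $\pi_1$-injective on these pieces; here I would invoke the negative-Euler-characteristic criterion: a component of $S\menos p^{-1}(\S)$ covers a component of $\O\menos\S$, so by the Riemann–Hurwitz identity $\chi(S)=|G|\chi(\O)$ applied componentwise (or rather to the covering between the completed pieces) its Euler characteristic is a positive multiple of a negative number, hence negative. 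If $\g_i$ is an arc joining two cone points of order $2$, this is the delicate case: locally near such a cone point $P$, the branched cover looks like $z\mapsto z^2$ (or an iterate) on a disc, and the preimage of the local radial arc at $P$ is a union of arcs through the preimage points; since $P$ has order exactly $2$, each preimage point of $P$ is either a regular point (where two half-arcs meet to form a single locally embedded arc crossing the point) or has even local degree — I would check that in all cases $p^{-1}(\g_i)$ is a disjoint union of simple closed curves and/or simple arcs and that no new order-$2$ cone-point-joining arcs with forbidden behaviour appear. Then I would conclude admissibility of $p^{-1}(\S)$ from the Euler characteristic criterion exactly as above: each completed component of $S\menos p^{-1}(\S)$ finitely covers (in the orbifold sense) a completed component of $\O\menos\S$, which has $\chi<0$, so it has $\chi<0$ too.

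\smallskip

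\textbf{Direction 2: $p(\Gamma)$ is a multicurve in $\O$.} Now $\Gamma$ is an $H$-invariant multicurve in $S$; since $\Gamma$ is $H$-invariant, $p(\Gamma)$ is a closed subset of $\O$ and $p^{-1}(p(\Gamma))=\Gamma$. The components of $p(\Gamma)$ are simple closed curves or simple arcs: a component of $\Gamma$ either maps homeomorphically to its image (if its $H$-stabilizer is trivial or acts freely on it) or is folded onto an arc by an order-$2$ stabilizer — and a curve in $S$ fixed setwise by an orientation-preserving involution of $S$, with the involution acting nontrivially on it, is folded at exactly two fixed points, which become cone points of order $2$ in $\O$; this is precisely why the definition allows arcs between order-$2$ points. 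Here the main obstacle, and the step I expect to require the most care, is ruling out the four forbidden configurations (i)--(iv) for $p(\Gamma)$: two distinct components of $p(\Gamma)$ could a priori become homotopic in $\pi_1(\O\menos\mathrm{Sing}\,\O)$, or a component could bound a disc, or a once-punctured-disc, or a twice-punctured disc with two order-$2$ points, even though no component of $\Gamma$ does upstairs. I would handle this again via the componentwise Euler characteristic identity $\chi(S)=|G|\chi(\O)$: if a completed component $\O_j^c$ of $\O\menos p(\Gamma)$ had $\chi(\O_j^c)\ge 0$, then every component of $S\menos\Gamma$ lying over it would have Euler characteristic a positive multiple of $\chi(\O_j^c)\ge 0$, i.e. $\ge 0$, contradicting the admissibility of $\Gamma$ in $S$. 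This forces every component of $\O\menos p(\Gamma)$ to have negative Euler characteristic, which by the remark following the definition is equivalent to $p(\Gamma)$ being an admissible multicurve. The only remaining technical points are that the components of $p(\Gamma)$ are genuinely disjoint and embedded (clear, as $p$ maps the closed $H$-invariant set $\Gamma$ with fibered structure, and two components of $\Gamma$ in different $H$-orbits have disjoint images while those in the same orbit have equal image) and that a "component" here is counted once — i.e. we first throw away repeated images, matching condition (i).
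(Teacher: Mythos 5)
Your proposal is correct and follows essentially the same route as the paper: a case analysis on closed curves versus arcs, the observation that an orientation-preserving isometry stabilizing a circle acts either freely or with exactly two fixed points (yielding a circle or an arc between order-$2$ cone points in the quotient), and the Euler-characteristic/Riemann--Hurwitz criterion applied to the complementary components to get admissibility. The one point you leave hedged---the local structure of $p^{-1}(\g_i)$ at preimages of an order-$2$ cone point---resolves immediately because the covering is regular, so every point in the fibre over such a cone point is a branch point of local degree exactly $2$; hence the lifted arcs pair up and close into simple closed curves, and no arcs can appear in $p^{-1}(\S)$ (consistent with $S$ having no cone points).
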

 \begin{proof} 
 Let $\S$ be a multicurve in $\O$. If $ \g\in \S$ is a closed curve, then $p^{-1}(\g)$ is a union of closed curves. If $\g\in \S$ is an arc, because it joins cone points of order 2,  its preimage is also a union  of closed curves. Since the components of $\O\menos\S$ are hyperbolic,   their preimages under $p$ are also hyperbolic (because of the above formula relating the Euler characteristics of a component and its preimage). This shows that $ p^{-1}(\S)$ is an admissible  multicurve in $S$.
 
 Conversely, let $\Gamma$ be a multicurve on $S$ invariant under $H$. Because $S$ is a surface, all the components of $\Gamma$ are closed curves. An isometry   acting on a closed curve has either no fixed points or two fixed points. The quotient in the  first case is again a closed curve, while in the second case is an arc joining two cone points of order 2. 
 The remaining  conditions on the definition of multicurve are satisfied because each component of $S\menos\Gamma$  is hyperbolic, and so their images under $p$ are also hyperbolic.
 \end{proof}

\subsection{Stable hyperbolic surfaces}\label{sec:HypSurfWithNodes}

 \subsubsection{Stable hyperbolic surfaces.}
We will expressed most of our statements and results in terms of hyperbolic surfaces, rather than in terms of Riemann surfaces. The 
 reader will have no difficulty in making the appropriate translation. Let $S$ be a  closed orientable  surface  of genus
$g$, and let ${\mathcal F} \subset S$ be an admissible multicurve, i.e., a collection
(maybe empty) of homotopically independent  pairwise disjoint
simple loops. Consider the quotient space $S  =S/\mathcal{F}$ obtained by
identifying the points belonging to the same curve in ${\mathcal
F}$. We say that $S$ is a {\it stable surface of genus $g$} and
that each element of $S$ which is the projection of a curve in
${\mathcal F}$ is a {\it node} of $S$. We denote by $N(S) \subset
S$ the collection of nodes of $S$ and we say that each connected
component of $S\hspace{-0.05 cm}\menos N(S)$ is a {\it part} of $S$. In particular, a
stable surface $S$ with $N(S)=\emptyset$ is just a closed surface.

 As was seen in Section \ref{sec:Orbifolds}, each part admits a hyperbolic structure, so it may sense to define
a  {\it stable hyperbolic surface} of genus $g$    as  a stable surface $X$ of genus $g$ so that each of its parts  has a complete hyperbolic structure  of finite area (that is, each part is a hyperbolic  surface with punctures).   An {\it isometry} $h$ between two stable hyperbolic surfaces is a homeomorphism $h$ whose  restriction is an isometry  on the complement of the nodes.

\subsubsection{Correspondence with stable graphs.}
Next, we will associate to each stable hyperbolic surface (or a Riemann surface with nodes) a combinatorial object.

 A {\it stable graph} is a connected  weighted graph $\mathfrak G$, where  each vertex with weight zero
has  degree at least three (the {\it degree} of a vertex is the number of edges coming into it, taking into account that loops count by two). 
The genus of $\mathfrak G$ is defined as 
$$g=\displaystyle\sum_{i=1}^{v_{\mathfrak G}} g_i+ e_{\mathfrak G}-v_{\mathfrak G}+1,
$$
where $v_{\mathfrak G}$ is the number of vertices and $e_{\mathfrak G}$ is the number of edges and the $g_i$ is the weight of the vertex $v_i$ of $\mathfrak{G}$. An isomorphism between stable graphs is a usual graph isomorphism preserving  the weights. Note that in the previous formula, a loop  counts as one edge.

To a stable hyperbolic surface   $X$  we can associate  the stable graph ${\mathcal G}(X)=(V_{X},E_{X},w)$, where $V_{X}$ is the set of vertices, $E_{X}$ is the set of edges,
and $w$ is a function on the set $V_{X}$ with non-negative integer values.
This triple is defined in the following way:

\begin{enumerate}
\item To each   part of $X$ corresponds a vertex in $V_{X}$.

\item To each node in the boundary of  two parts  
corresponds and edge in $E_{X}$ connecting the corresponding vertices.
Multiple edges between the same pair of vertices and loops are allowed in
${\mathcal G}(X)$.

\item The function $w:V(X)\rightarrow{\mathbb{Z}}_{\geq0}$ associates to each
vertex the genus $g_{i}$ of the corresponding part.
\end{enumerate}
Notice that  if a part of $X$ has genus $0$, since it is hyperbolic,  it should  have at least three
punctures; therefore, the associated graph is stable.

Conversely, given a stable graph, we can easily find a stable hyperbolic surface $S$    such that $\mathfrak G \cong {\mathcal G}(S)$.

Two hyperbolic surfaces with nodes $X_{1}$ and $X_{2}$ are homeomorphic if and only if
their stable graphs ${\mathcal G}(X_{1})$ and ${\mathcal G}(X_{2})$ are isomorphic as weighted graphs \cite{miranda}. Therefore,   stable graphs up to isomorphism are in bijection with  homeomorphism classes of   hyperbolic stable surfaces.

We end with the following remark. For each hyperbolic  surface with nodes $X$ its preserving orientation isometry group    ${\rm Iso}^+(X))$ is finite.  Also, the group of automorphisms of the graph $\G(X)$, which we denote by   $\mbox{Aut}({\mathcal G}(X))$, is finite.  Any $\varphi \in {\rm Iso}^+(X)$ induce an
automorphism  of ${\mathcal G}(X)$, therefore   there is a  homomorphism $\theta_X:{\rm Iso}^+(X) \rightarrow \mbox{Aut}({\mathcal G}(X))$. The homomorphism $\theta_X$ is, in general, neither injective nor surjective.

\subsubsection{Augmented moduli space} The augmented moduli space of genus $g$ is the space $\widehat{\M}_g$ of  stable hyperbolic surfaces of genus $g$ up to isometry. This space provides a compactification of moduli space, once given a topology that intuitively works as follows. Consider a point  $X\in \widehat{\M}_g$ with nodes $N_1,\dots, N_r$.  A sequence $X_n\in \M_g$ converges to   $X\in \widehat{\M}_g$  if there is a family of geodesics $\mathcal{F}_n=\{\a_1^n,\dots, \a_r^n\}$ in $X_n$ so that:
\begin{itemize}
\item[(i)] the stable surface $X_n/\mathcal{F}_n$ is homeomorphic to $X$;
\item[(ii)]    the length of each $\a_i^n, i=1,\dots, r,\,$ tends to zero when $n\to \infty$; and 
\item[(iii)]  away from the curves $\a_i$ and the nodes, the hyperbolic surfaces $X_n$ are close to $X$ when $n\to \infty$.
 \end{itemize}
A way of formalizing this is by first considering marked surfaces and the augmented  Teichm\"uller space. See for instance \cite{Hubbard} for details.

 The augmented moduli space is stratified in the following way. For each stable graph  $\mathfrak G$, we consider the space ${\mathcal E}({\mathfrak G})\subset  \widehat{\M}_g$   of stable hyperbolic surfaces $X$ whose associated graph $\G_X$ is isomorphic to ${\mathfrak G}$. This space is homeomorphic to  the product of the moduli spaces of the vertices of $\G_X$, i.e., moduli spaces of punctured surfaces. (See \cite{Hubbard} for a detailed exposition of this.)
Notice that the codimension of the stratum ${\mathcal E}({\mathfrak G})$ in  
$\widehat{\M}_g$ is equal to the number of edges of ${\mathfrak G}$, or the number of nodes of the stable hyperbolic surface $X$.
Since there is a finite number of non-isomorphic  stable graphs of a fixed genus $g$, then $\widehat{   {\mathcal M}_g}$ decomposes into a finite number of strata ${\mathcal E}({\mathfrak G})$, one for each stable graph. This is called the {\it topological stratification} of $\widehat{\M}_g$.

The problem of determining  the number of non isomorphic stable graphs of a fixed genus $g\geq2$  (or the number of different strata of $\widehat{\M}_g$), is not an easy problem which is not in the scope of this paper. Nevertheless, as a curiosity for $g=2$ there are seven different stratum including the smooth curves; two $0$-dimensional stratum, two $1$-dimensional stratum and two $2$-dimensional strata. For $g=3$, it can be checked that there are forty two different strata.

 \subsection{Equisymmetric stratification.}
 \label{sec:equisymmetric}
 
 \subsubsection{Topological actions.}
 Let $G$ be a finite group acting on a surface $S$ by orientation-preserving homeomorphisms, i.e., there is a 
monomorphism $\iota\colon G\to Homeo^+(S)$. 
Two actions $(S_1,G,\iota_1)$, $(S_2,G,\iota_2)$ of $G$ on $S_i,i=1,2$ 
are {\it topologically equivalent} if there is a  homeomorphism $f\colon S_1\to S_2$ and an isomorphism $\varphi$ of $G$ such that for each $g\in G$ the following diagram commutes
$$
\label{diagram}
\begin{diagram} \node{S_1}\arrow{e,t}{\iota_1(g)}\arrow{s,l}{f}  \node{S_1}\arrow{s,r}{f}\\ \node{S_2}\arrow{e,b}{\iota_2(\varphi(g))}  \node{S_2} \end{diagram} 
$$

The quotient space $\O=S/\iota(G)$ is an orbifold, and the quotient map
  $p\colon S\to \O$ is a regular branched covering whose automorphism group is $\iota (G)$. By covering theory, there is an epimorphism $\bar\Phi\colon \pi_1(\O,*) \to \iota(G)$ with kernel $p_*(\pi_1(S,\tilde *))$, where $*$ is a basepoint in $\O$ and $\tilde *$ is a point in $p^{-1}(*)$.  We denote  $\Phi=\iota^{-1}\circ \bar\Phi$.
 
Conversely, any epimorphism $\Phi\colon\pi_1(\O,*) \to G$   with kernel isomorphic to the fundamental group of a surface $S$ determines an action of $G$ on $S$ up to 
  topological equivalence. In this setting,  the topological  equivalence of actions can be restated as saying that two epimorphisms  $\Phi_i\colon \pi_1(\O_i,*_i)\to G$ are {\it equivalent} if there is a homeomorphism $h\colon \O_1\to \O_2, h(*_1)=*_2$ and an isomorphism   $\varphi$ of $G$ such that the following diagram commutes:
  \[
\begin{diagram} \node{\pi_1(\O_1,*_1)}\arrow{e,t}{\Phi_1}\arrow{s,l}{h_*}  \node{G}\arrow{s,r}{\varphi}\\ \node{\pi_1(\O_2,*_2)}\arrow{e,b}{\Phi_2}  \node{G} \end{diagram}
\]
  (notice that, when proving the second diagram from the first one,  $h$ is  the homeomorphism  induced by the homeomorphism $f$  and the isomorphism $\varphi$ is the same). 
 
 We remark that the classification of actions of finite groups on surfaces is a problem of great complexity if the genus of the surface is big. For   genera  2,3,4, the complete classification can be seen in 
  \cite{Broughton-Classifying} and \cite{kimura}.

 \subsubsection{Equisymmetric locus.}
 
 Let $G$ be a finite group, $\O$ be an orbifold and  $\Phi\colon \pi_1(\O,*)\to G$  be an epimorphism with kernel isomorphic to the fundamental group of a surface $S$. Consider the topological class of action   determined by   $\Phi $.  We define the {\it equisymmetric stratum} of $\mathcal{M}_g$ determined by $\Phi$ as the set
$\mathcal{M}_g(G,\O,\Phi)$ consisting of the hyperbolic surfaces $X\in \M_g$ such that the action  
$(X,{\rm Iso}^+(X),i)$ (where $i$ is the inclusion)  is topologically equivalent to the action given by $\Phi $.

When considering all possible actions of finite groups on $S$, these loci determine a stratification of the moduli space $\mathcal{M}_g$ known as the {\it equisymmetric stratification of moduli space}, see \cite{Broughton}.

 \section{Strata in the boundary of equisimmetric loci}  \label{sec:strata}

 In this section (and on the remaining of the paper) we will fix an action of a finite group $G$ on a surface $S$ given by an epimorphism $\Phi\colon \pi_1(\O,*)\to G$. For brevity, 
 let us denote by $\mathcal{L}$ the equisymmetric stratum $ \mathcal{M}_g(G,\O,\Phi)$.  Let $\partial \mathcal{L}$ be  the set of points $Z\in\widehat{\M}_g\menos\M_g$ which are in the closure of $\mathcal{L}$ as a subset of $\widehat{\M}_g$.
 
 The intersection of $\partial\mathcal{L} $ with the strata of $\widehat\M_g$ provides a stratification of $\partial\mathcal{L} $. 
The next theorem gives a first description of the strata of $\partial\mathcal{L} $. We first need a notion of equivalent admissible  multicurves on an orbifold.


\begin{defin}
Let $p\colon S\to \O$ be a regular  branched covering determined by an epimorphism $\Phi\colon \pi_1(\O,*)\to G$.  Two (admissible) multicurves $\S,\S'$ in   $\O$ are said to be {\it  equivalent under $\Phi$}, written as $\S\sim_{\Phi}\S'$,   if there is a homeomorphism $h_* $ of $\O$ fixing $*$,  and there is an isomorphism $\varphi$ of $G$ such that $h(\S)=\S'$  and $\varphi\circ\Phi\circ h_*^{-1}=\Phi$. 
\end{defin}


  \begin{teo}\label{thm:Existence} 
  Let $\mathcal{L}=\mathcal{M}_g(G,\O,\Phi)$.
Then we have:  
  \begin{itemize}
  \item[(a)] Each  stratum of  $\partial\mathcal{L} $ is connected. 
   \item[(b)]  
   There is a surjection $\Psi$ from the set of equivalence classes of multicurves in $\O$ onto the set strata of $\partial\mathcal{L} $. 

  \item[(c)] If the signature of $\O$ is $(\tau; m_1,\dots, m_r)$ and $\S=\{\g_1,\dots, \g_k\}$ is a multicurve in $\O$, then  the dimension of the stratum $\Psi(\S)$ is    equal to $3\tau-3+r-k$.
 %
 %
  \end{itemize}

 \end{teo}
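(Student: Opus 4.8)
The plan is to produce, for each equivalence class $[\S]$ of admissible multicurves in $\O$, a stable hyperbolic surface $X_\S$ that lies in $\partial\mathcal{L}$, by pinching the $G$-invariant multicurve $p^{-1}(\S)$ on a surface of $\mathcal{L}$; the map $\Psi$ sends $[\S]$ to the stratum of $\widehat{\M}_g$ containing $X_\S$. First I would fix a hyperbolic structure on $\O$ (of finite area, with the $\g_i$ realized as geodesics, using the discussion in Section~\ref{sec:Orbifolds}) and lift it through $p$ to obtain $X\in\mathcal{L}$ together with the $G$-invariant geodesic multicurve $\Gamma=p^{-1}(\S)$, which is admissible by Lemma~\ref{lem:multicurves}. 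Pinching $\Gamma$ (letting the lengths of its components go to zero while keeping the complementary pieces convergent) gives a path in $\M_g$ whose limit $X_\S\in\widehat{\M}_g$ has node set the image of $\Gamma$ and parts the metric completions of the components of $X\menos\Gamma$; by the description of convergence in Section~\ref{sec:HypSurfWithNodes} this $X_\S$ lies in $\partial\mathcal{L}$. That $\Psi$ is well defined (independent of the representative of $[\S]$ and of the chosen hyperbolic structure) follows because an equivalence $h,\varphi$ of multicurves lifts to a $G$-equivariant homeomorphism of the pinched surfaces, hence an isomorphism of the associated stable graphs, so the stratum $\mathcal{E}(\G(X_\S))$ depends only on $[\S]$.

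Surjectivity, part (b): given any $Z\in\partial\mathcal{L}$, choose a sequence $X_n\in\mathcal{L}$ converging to $Z$ with pinching geodesics $\mathcal{F}_n$. For each $n$, because $X_n\in\mathcal{L}$ carries an action of $G$ topologically equivalent to the one given by $\Phi$, one can replace $\mathcal{F}_n$ by the $G$-orbit of its shortest systole-type representative and argue (finiteness of short geodesics below a Margulis-type constant, plus the fact that the isometry group permutes them) that, after passing to a subsequence, $\mathcal{F}_n$ may be taken $G$-invariant with a fixed topological type $\Gamma$; then $\S:=p(\Gamma)$ is an admissible multicurve in $\O$ by Lemma~\ref{lem:multicurves}, and $Z$ lies in the same stratum as $X_\S$, i.e.\ $Z\in\mathrm{Im}\,\Psi$.

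For part (a), connectedness of a stratum $\Psi(\S)$: the stratum is, up to the action of the mapping class group, the image of the product of Teichm\"uller-type spaces of the $G$-equivariant hyperbolic structures on the parts compatible with the combinatorics fixed by $\S$ and $\Phi$; such a parameter space is a quotient of a product of (contractible) Teichm\"uller spaces of suborbifolds by a group, hence connected, and its continuous image is connected. For part (c), the dimension count: the complex dimension of $\mathcal{M}_g(G,\O,\Phi)$ equals $\dim_{\C}\T(\O)=3\tau-3+r$ (Teichm\"uller space of an orbifold of signature $(\tau;m_1,\dots,m_r)$), and pinching each of the $k$ curves of $\S$ imposes one complex condition (equivalently, cutting reduces the dimension by one per curve, matching that the codimension of a stratum of $\widehat{\M}_g$ is its number of nodes), giving $3\tau-3+r-k$; this can be verified directly by adding up $3\tau_j-3+\text{(punctures)}$ over the parts of $\O\menos\S$ and checking the bookkeeping.

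The main obstacle I expect is part (b), specifically the step that lets one assume the pinching multicurves $\mathcal{F}_n$ are $G$-invariant of a single topological type after passing to a subsequence: one must rule out that the short geodesics producing $Z$ are ``asymmetric'' in the limit, which requires exploiting that $X_n$ genuinely has a $G$-action (not just that $X_n\in\mathcal{L}$ as a point) and a compactness argument on the finitely many topological types of short multicurves; the rest is essentially assembling standard facts about augmented Teichm\"uller space and orbifold Teichm\"uller theory.
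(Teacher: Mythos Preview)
Your proposal is correct and follows essentially the same route as the paper. The one place you make life harder than necessary is the surjectivity step in (b): since each $X_n\in\mathcal{L}$ carries $G$ as its full orientation-preserving isometry group, the set of closed geodesics on $X_n$ of length less than any fixed $\varepsilon$ is \emph{automatically} $G$-invariant (isometries preserve lengths), and for $n$ large and $\varepsilon$ small this set is precisely the pinching multicurve $\mathcal{F}_n$. So there is no need to replace $\mathcal{F}_n$ by a $G$-orbit or to pass to a subsequence to fix a topological type; your ``main obstacle'' dissolves with this observation, which is exactly how the paper argues. For (a) the paper phrases things on the surface side---writing $\mathcal{E}(\mathfrak{G})\cap\partial\mathcal{L}$ as a product of moduli spaces of orbit representatives together with equisymmetric loci in the moduli spaces of $G$-fixed vertices, then citing Broughton for connectedness of the latter---but this is equivalent to your orbifold-Teichm\"uller description.
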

\begin{proof}

(a)   Consider the stratum $\mathcal{E}(\mathfrak{G})$ of $\partial\M_g$ corresponding to the stable graph $\mathfrak{G}$. We need to show that $\mathcal{E}(\mathfrak{G})\cap\partial\mathcal{L} $ is connected. We have
$$
\mathcal{E}(\mathfrak{G})=\prod_{V\in V(\G)}\M_{w(V),d(V)},
$$
where $w(V)$ is the weight of the vertex $V$, $d(V)$ its degree, and $\M_{g,b}$ is the moduli space of surfaces of genus $g$ and $b$ punctures. If $ \mathcal{E}(\mathfrak{G})\cap \partial \mathcal{L}\not=\emptyset$, then the group $G$ acts on the graph $\mathfrak{G} $. Let 
$\mathcal{W}_1,\dots,\mathcal{W}_s$ be the orbits  with more than one vertex  under this action. 
Then, $\mathcal{E}(\mathfrak{G})\cap \partial \mathcal{L} $ is homeomorphic to the product of the  moduli spaces of one vertex in each of the orbits $\mathcal{W}_i$ and certain  equisymmetric loci in the  moduli spaces of the vertices fixed by $G$. Since equisymmetric loci are connected (see \cite{Broughton}),  we have the result. 
  

(b) We first define the map $\Psi$. Let $\S=\{\g_1,\dots, \g_k\}$ be a multicurve in $\O$. Since $p^{-1}(\S)$  is  a multicurve 
in $S$ (by Lemma \ref{lem:multicurves}), we can consider the stratum $\mathcal{E}(p^{-1}(\S))$ of $\widehat\M_g$ to which  the stable surface $S/p^{-1}(\S)$ belongs.
 We define $\Psi([\S])= \mathcal{E}({p^{-1}(\S)})\cap \partial \mathcal{L}$. 
 We need to check that this intersection is non-empty and that the definition is independent of the  chosen representative of the equivalence class $[\S]$. 

First, since all the components of $\O\setminus\Sigma$ have negative Euler characteristic (by the definition of multicurve), then there is a hyperbolic structure on $\O$ with all the curves in $\Sigma$ pinched, 
producing a hyperbolic orbifold  with nodes. This hyperbolic structure lifts to a 
 hyperbolic structure on  
$S/p^{-1}(\Sigma)$, i.e., we have a stable curve $X$ in 
$\mathcal{E}({p^{-1}(\S)})\cap \partial \mathcal{L}$, and so this set is non-empty. 
 
To show  that   $\Psi$ is well defined  is just an observation, due to the fact that, if   $\S\sim_{\Phi}\S'$, then the homeomorphism $h$ determines a covering homeomorphism $ f$ 
with    $ f(p^{-1}(\S))=p^{-1}(\S')$. Thus the stable surfaces  $S/p^{-1}(\S)$ and  $S/p^{-1}(\S')$ are isomorphic, so they belong to the same stratum.

Finally, let us see that $\Psi$ is onto. Let $\mathcal{E}(\mathfrak{G})\cap\partial \mathcal{L}$ a non-empty stratum. Take $X\in \mathcal{E}(\mathfrak{G})\cap\partial \mathcal{L}$  and $X_n\in \L$ a sequence converging to $X$. For $n$ sufficiently large and $\varepsilon$ sufficiently small, the set of curves in $X_n$ with length less than $\varepsilon$ is a multicurve $\Gamma$ invariant under $G$ (these are the curves which converge to the nodes of $X$).  Let $\S=\Gamma/G$. Then, the construction shows that  $\Psi([\S])=\mathcal{E}(\mathfrak{G})\cap\partial \mathcal{L}$.


 (c) The stratum $\Psi(\S)$, with  $\S=\{\g_1,\dots, \g_k\}$, can also be seen as the moduli space of the orbifold $\O$ with $k$ curves or arcs pinched. Since each curve or arc pinched reduces the dimension by 1, we have the result. 
  \end{proof}

\begin{rem}   In the situation of Theorem \ref{thm:Existence}, the equisymmetric locus $\mathcal{L}$ can be determined from the moduli space of the orbifolds $\O$ with signature  $(\tau; m_1,\dots, m_r)$. That is, if $X$ is a hyperbolic orbifold homeomorphic to $\O$ and $H\colon \pi_1(\O,*)\to {\rm Iso}^+ \mathbb H^2$  is its holonomy representation, then the holonomy representation of $p^{-1}(X)$, which is an element of $\mathcal{L}$, is the restriction of $H$ to the kernel of $\Phi$.  Conversely, the holonomy representation of any point in   $\mathcal{L}$ can be obtained in this way. In similar way, each stratum of $\partial \mathcal{L}$ is determined by a stratum in the augmented moduli space of $\O$.  
\end{rem}

\section{Stable graph determined by a multicurve.}\label{sec:covering}

\subsection{Notations about the action.}
As in the previous section, we consider a fixed action $(S,G,\iota)$, where $S$ is a surface of genus $g$ and $G$ a finite group. Let  $p\colon S\to \O$ be the associated branched covering, $\Phi\colon \pi_1(\O,*)\to G$ the associated epimorphism,    and let $\mathcal{L}=\M_g(G,\O,\Phi)$ be the corresponding equisymmetric locus. 
 In  Theorem \ref{thm:Existence}, we defined a map $\Psi$  which assigns to any multicurve $\S$ in $\O$ a stratum of $\partial\mathcal{L}$, namely, the stratum corresponding to the stable surface $S/p^{-1}(\S)$. The stable graph corresponding to $S/p^{-1}(\S)$ is denoted by  $\G_{\S}$ and called {\it stable graph determined by $\S$ under $p$}. In Theorem \ref{thm:Combinatotial}, we will describe  $\G_{\S}$
  in terms of $\Phi$.

Before stating the theorem, we need to give some notations and preliminary results.

Paths are always maps from the unit interval $[0,1]$ to a space, and we will use the same notation for  both  the map and  its image. 
Also by abuse of notation, usually  we will not distinguish between a loop   and its homotopy class. 
 We will also use the notations $g\cdot x= \iota g(x)=\iota(g)(x)$ and $g\cdot Z=\iota(g)(Z)$, for $g\in G$ and $x\in S$, $Z\subset S$.

Fix basepoints $*\in\O\setminus{\rm Sing}\,\O$ and $\tilde *\in p^{-1}(*)$. We recall the definition of $\Phi=\iota^{-1}\circ \bar\Phi$ from covering theory. 
The epimorphism $\bar\Phi\colon \pi_1(\O,*) \to \iota(G)$ is defined by the condition $\bar\Phi([\a])(\tilde{*}) =\tilde{\a}(1)$, where   $[\a]\in\pi_1(\O,*)$ and $\tilde{\a}$ is the unique  lift of $\a$ starting at $\tilde{*}$. Clearly, ${\rm ker}\,\bar\Phi=p(\pi_1(S,\tilde{*}))$.
   We also recall that if a covering homeomorphism $h$ maps $x$ to $x'$, and if   $\tilde \b , \widetilde \b'$ are, respectively, the unique lifts starting at $x,x'$ of a same path $\b$  then
 $h(\tilde \b(1))=\widetilde \b'(1)$.

\subsection{Homomorphisms $\Phi_X$ induced by $\Phi$.}\label{sec:InducedHomo} 
%
%
Let  $X\subset \O$ be: either a 2-dimensional suborbifold of $\O$,   or a closed curve in $\O$ not containing singular points, or a simple arc whose endpoints are cone points of $\O$ of order 2 and no containing other cone points. 
We choose a basepoint $*_X\in X\setminus {\rm Sing}\,X$  and a path $\b_X$ from $*$ to $*_X$ and we  consider the homomorphism  $i_*\colon \pi_1(X,*_X)\to \pi_1(\O,*)$ defined by $i_*(\a)=\b_X\a\b_X^{-1}$. Now, define $\Phi_X=\Phi\circ i_*$. Notice that $i_*$   depends on the choice of $\b_X$, but only weakly: if we choose another $\b'_X$, then the analogous homomorphism $i'_*$  is conjugate to $i_*$.

 \medskip

Next lemma will   be useful for Theorem \ref{thm:Combinatotial}. It  concerns the stabilizers of connected components of $p^{-1}(X)$, for $X\subset \O$. 
 If $Z\subset S$, its {\it stabilizer} is the subgroup   
$
{\rm Stab}_Z =  \{g\in G\,\colon \,   g\cdot Z=Z 
 \}$.


 \begin{lem}  
 \label{lem:stabilizers}
 Let $S, G, \O , p, *, \tilde *$ and $\Phi$  be as above, and let $X $  be a connected subset of $\O$. Choose $*_X, \b_X$ as above. Let $\widetilde \b_X$ be the unique lift of $\b_X$ starting at $\tilde *$, and   denote  $\tilde *_X=\wt\b_X(1)$. Finally,  let  $\mathcal{C}$ be the connected component of $p^{-1}(X)$ containing $\tilde *_X$. Then
\begin{itemize}
\item[(a)]  ${\rm Stab}_{\mathcal C }={\rm Im}\Phi_X$, and $\mathcal C$ contains the points $h\cdot \tilde*_X$ for $h\in {\rm Im}\Phi_X$.
\item[(b)]  If $\mathcal C'$ is another component of $p^{-1}(X)$, then there exists $g\in G$ so that $\mathcal{C}'=g\cdot \mathcal{C}$,   ${\rm Stab}_{\mathcal C' }=g{\rm Im}\Phi_Xg^{-1}$, and $\mathcal C'$ contains the points $g {\rm Im}\Phi_X\cdot \tilde*_X$.
\item[(c)]
 The number of components of $p^{-1}(X)$ is $|G|/|{\rm Im}\Phi_X|$.\end{itemize} 
\end{lem}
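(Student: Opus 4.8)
The plan is to reduce everything to the standard covering-space dictionary relating subgroups of $\pi_1(\O,*)$, via $\bar\Phi$, to the action of $G = \iota(G)$ on the fiber $p^{-1}(*)$ and on connected components of preimages. First I would establish part (a). The key observation is the lifting property recalled just before the lemma: if $\a$ is a loop in $X$ based at $*_X$, then $i_*(\a) = \b_X \a \b_X^{-1}$ is a loop in $\O$ based at $*$, and its lift starting at $\tilde *$ runs along $\wt\b_X$ to $\tilde *_X$, then along the lift $\wt\a$ of $\a$ starting at $\tilde *_X$, and back along the reverse of a lift of $\b_X$. By definition of $\bar\Phi$, the element $\bar\Phi(i_*(\a))$ sends $\tilde *$ to the endpoint of this lift; tracing through, one gets that $\bar\Phi(i_*(\a))$ is exactly the deck transformation sending $\tilde *_X$ to $\wt\a(1)$. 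Since $\wt\a$ lies in the component $\mathcal C$ of $p^{-1}(X)$ containing $\tilde *_X$, its endpoint $\wt\a(1)$ also lies in $\mathcal C$; hence $\bar\Phi(i_*(\a)) = \iota(\Phi_X(\a))$ maps $\tilde *_X$ into $\mathcal C$. Conversely, because $X$ is connected and path-connected, every point of $\mathcal C$ in the fiber over $*_X$ is the endpoint of such a lifted loop, so $\Im\Phi_X \cdot \tilde *_X$ is precisely the set of points of $\mathcal C$ over $*_X$. From this I deduce $\Im\Phi_X \subseteq {\rm Stab}_{\mathcal C}$ (an element of $\Im\Phi_X$ moves $\tilde *_X$ within $\mathcal C$, hence moves $\mathcal C$ to a component meeting $\mathcal C$, hence fixes $\mathcal C$ setwise); and for the reverse inclusion, if $g \in {\rm Stab}_{\mathcal C}$ then $g \cdot \tilde *_X$ is a point of $\mathcal C$ over $*_X$, so $g \cdot \tilde *_X = h \cdot \tilde *_X$ for some $h \in \Im\Phi_X$, and since the action on the fiber is free (regular covering), $g = h \in \Im\Phi_X$.

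For part (b), I would use transitivity of $G$ on the components of $p^{-1}(X)$, which follows because $X$ is connected and $G$ acts transitively on each fiber $p^{-1}(x)$: given a component $\mathcal C'$, pick any point $y \in \mathcal C'$ over $*_X$; there is $g \in G$ with $g \cdot \tilde *_X = y$, and then $g \cdot \mathcal C$ is the component through $y$, namely $\mathcal C'$. The stabilizer statement is then the general fact that stabilizers along an orbit are conjugate: ${\rm Stab}_{g \cdot \mathcal C} = g\, {\rm Stab}_{\mathcal C}\, g^{-1} = g\, \Im\Phi_X\, g^{-1}$, using part (a). And $\mathcal C' = g \cdot \mathcal C$ contains $g \cdot (\Im\Phi_X \cdot \tilde *_X) = g\,\Im\Phi_X \cdot \tilde *_X$. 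Part (c) is then immediate: the number of components of $p^{-1}(X)$ equals the index $[G : {\rm Stab}_{\mathcal C}] = |G| / |\Im\Phi_X|$ by the orbit–stabilizer theorem.

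The one point requiring a little care — and the main thing I would want to nail down rather than wave at — is the precise identification in part (a) of the deck transformation $\bar\Phi(i_*(\a))$ with "the element moving $\tilde *_X$ along $\wt\a$": this rests on the composition law for $\bar\Phi$ together with the fact (also recalled before the lemma) that a covering homeomorphism commutes with lifting, so that conjugating the loop $\a$ by $\b_X$ corresponds, on the fiber, to transporting by the lift of $\b_X$. I would write this out carefully, since it is exactly the step where the choice of $\b_X$ enters and where one must check that $\Im\Phi_X$ — which a priori depends on $\b_X$ only up to conjugacy — pins down $\mathcal C$ unambiguously once $\tilde *_X = \wt\b_X(1)$ is fixed. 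Everything else is a direct application of orbit–stabilizer and the freeness/transitivity of the deck action on fibers of the regular branched covering $p$ (away from the branch locus, which we may avoid since $*_X \notin {\rm Sing}\,X$ when $X$ is a curve or arc, and for $2$-dimensional $X$ one works over a non-singular basepoint as well).
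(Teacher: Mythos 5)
Your proposal is correct and follows essentially the same route as the paper: identify $\Im\Phi_X\cdot\tilde *_X$ with the fiber of $\mathcal C$ over $*_X$ by lifting loops conjugated through $\b_X$, use freeness of the deck action on the fiber over the non-singular basepoint for the reverse inclusion, and get (b) by transitivity. The only cosmetic difference is in (c), where you invoke orbit--stabilizer while the paper counts the $|\Im\Phi_X|$ preimages of a point inside each component; the two counts are equivalent.
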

  
 \begin{proof}
(a) 
  Let $\a\in\pi_1(X,*_X)$ and $\tilde \a$ its lift from 
 $  \tilde *_X $. Since $\tilde *_X\in\mathcal C$, then   $\tilde \a \subset \mathcal{C}$.  By the definitions of $\Phi,\tilde \a$ and $\tilde \b$,   the covering automorphism $\Phi_X(\a)=\Phi(\b_X\a\b_X^{-1})$ maps  $\tilde *$ to $\widetilde{\b_X\a\b_X^{-1}}(1)$, and the trace of the path  $\widetilde{\b_X\a\b_X^{-1}}$ contains the traces of the paths $\tilde{\b}$ and $\tilde{\a}$. Now the   lift of $\b$ starting at $\tilde *$ is $\tilde\b$ and ends at $\tilde *_{\g}$, while the   lift  of $\b$ starting at $\widetilde{\b_X\a\b_X^{-1}}(1)$ ends at $\tilde\a(1)$. Hence, $\Phi_X(\a)$ maps $\tilde *_{X}$ to  $\tilde\a(1)$, and so it preserves $\mathcal C$.
 
 On the other hand, let $h\in {\rm Stab}_\mathcal C$ and let $\tilde y=h(\tilde *_X)$. Let  $\tilde w$ be a path in $\mathcal C$ from $\tilde *_X$ to $\tilde y$. Then $p\circ \tilde w$ is  a loop in $X$ based on $*_X$ and, by the previous paragraph, $\Phi_X(p\circ \tilde w)( \widetilde*_X)  =\tilde y $. Hence $h=\Phi_X(p\circ \tilde w)$, i.e., $h\in {\rm Im}\,\Phi_X$.
  
Part (b) is clear. 

(c) A point $x\in X$ has $|G|$ preimages. By (a) and (b), each component of $p^{-1}(X)$ contains $|{\rm Im}\Phi_X|$ preimages of $x$. Thus, 
the result follows. 
  \end{proof}


\subsection{The multicurve $\S$.}\label{sec:Multicurve} 
 Let $\O$ be an orbifold and $\S=\{\g_1,\dots, \g_k\}$ be a multicurve in $\O$. The  complement $\O\setminus \cup_i \g_i$ is a union of (open) suborbifolds   $\O_1,\dots, \O_r$. We denote by $\bar \O_j$ the closure of $\O_j$ in $\O$. We do the following definitions
$$
\S_j=\{\g\in \S  \,:\, \g\subset\bar \O_j \}\quad \hbox{for each } j=1,\dots, r
$$ 
 $$
 \S^1=\{\g\in \S \,:\, \hbox{there exists a unique } j\in\{1,\dots,r\} \hbox{ with } \g\subset \bar\O_j \}
 $$
  $$
 \S^2=\{\g\in \S \,:\, \hbox{there exist different } j,j'\in\{1,\dots,r\} \hbox{ with } \g\subset \bar\O_j\cap \bar\O_{j'} \}
 $$

 It will be convenient to  collect the combinatorial  information given by the suborbifolds $\O_j$ and the curves in $\S^2$ in  a graph  $\mathcal{R}$, as follows. The vertices of $\mathcal{R}$ are the suborbifolds $\O_1,\dots, \O_r$ and  there is an edge between two different vertices $\O_j,\O_{j'}$ if and only if there is a $\g\subset \bar O_j\cap \bar\O_{j'}$ (this graph does not have loops). We choose a    spanning tree $\mathcal{T}$ of $\mathcal R$.

\subsection{Choice of basepoints and paths $\b_{j,\g}, \b_j,\b_{\g}$.} 
\label{sec:ChoicePaths} We will choose basepoints and paths joining them in a convenient way that we explain next (see Figure \ref{fig:CaminosBeta}). 

\begin{itemize}
\item[(1)] For each $j=1,\dots , r$, take a basepoint $*_j$ in $\O_j\setminus {\rm Sing}\,\O$. For any  $\g=\g_i\in \S$ choose  a basepoint  $*_{\g}\in\g\setminus{\rm Sing}\,\O$.

\item[(2)] Let$j=1,\dots ,r$.   For  each $\g\in\S^2\cap \S_j$ we  consider a  simple    path 
$\b_{j,\g}$ from $*_j$ to $*_{\g}$.
For 
   each $\g\in\S^1\cap \S_j$  we consider two   simple paths  $\b_{j,\g}^a$, $\b_{j,\g}^b$
 from $*_j$ to $*_{\g}$ such that $\b_{j,\g}^a(\b_{j,\g}^b)^{-1}$ intersects   $\g$ exactly once and, in the case that $\g$ is an arc, $\b_{j,\g}^a(\b_{j,\g}^b)^{-1}$ bounds a disc which contains just one of the endpoints of $\g$  and no other cone point (thus, $\b_{j,\g}^a(\b_{j,\g}^b)^{-1}$ is freely homotopic to either  $\g^a$ or $\g^b$).  Moreover we choose all these paths   so that  they are disjoint except at their endpoints.
\item[(3)] We choose one of the $*_j$ as basepoint for $\O$, for instance $*=*_1$. We choose  $\b_1$ to be the constant path, and we will choose the paths  $\b_j,\b_{\g} (j=2,\dots, r, \g\in \S)$ going along the paths   that we have already chosen  in (2), as follows.  For $j=2,\dots, r$, let $T_j=\O_1\g_{i_1}\O_{j_2}\g_{i_2}\dots \O_j$ be the path in the tree $\mathcal T$ from $\O_1$ to $\O_j$, given by its sequence of vertices and edges (notice that all the curves $\g$ in $T_j$ are in $\S^2$). The path $\b_j$ is determined from $T_j$, replacing each occurrence of $\O_{k}\g \O_{k'}$ by $\b_{ k,\g }\b_{k',\g }^{-1}$. 
Finally, let $\g\in\S$. If $\g\in \S^1$, then we take $\b_{\g}=\b_j\b_{j,\g}^a$. Otherwise, if $\g\in\S_j\cap \S_{j'}$ for $j\not=j'$, we choose one of the indexes $j,j'$, for instance $j$, and take $\b_{\g}=\b_j\b_{j,\g}$.

\end{itemize}

 \psfrag{*}{$*$}

\psfrag{*j1}{$*_{j_1}$}
\psfrag{*j2}{$*_{j_2}$}

\psfrag{*g}{$*_{\gamma}$}
\psfrag{*j}{$*_{j}$}
\psfrag{bj1}{$\beta_{j_1}$}
\psfrag{bj2}{$\beta_{j_2}$}
\psfrag{bg}{$\beta_{\gamma}$}
\psfrag{bj1g}{$\beta_{j_1,\gamma}$}
\psfrag{bj}{$\beta_{j}$}
\psfrag{bjga}{$\beta_{j,\gamma}^a$}
\psfrag{bjgb}{$\beta_{j,\gamma}^b$}

\psfrag{bj2g}{$\beta_{j_2,\gamma}$}
\psfrag{D}{$\Delta$}
\psfrag{g}{{\bf $ \gamma$} }
\psfrag{d}{$ {\delta}$}

\psfrag{Oj1}{$\bf{\mathcal{O}_{j_1}}$}
\psfrag{Oj2}{$\bf{\mathcal{O}_{j_2}}$}
 \psfrag{D}{$\Delta$}
\psfrag{g}{{\bf $ \gamma$} }
\psfrag{d}{$ {\delta}$}

\psfrag{Oj}{$\bf{\mathcal{O}_{j}}$}

\begin{figure}
\center
\includegraphics[height=6cm,width=15cm]{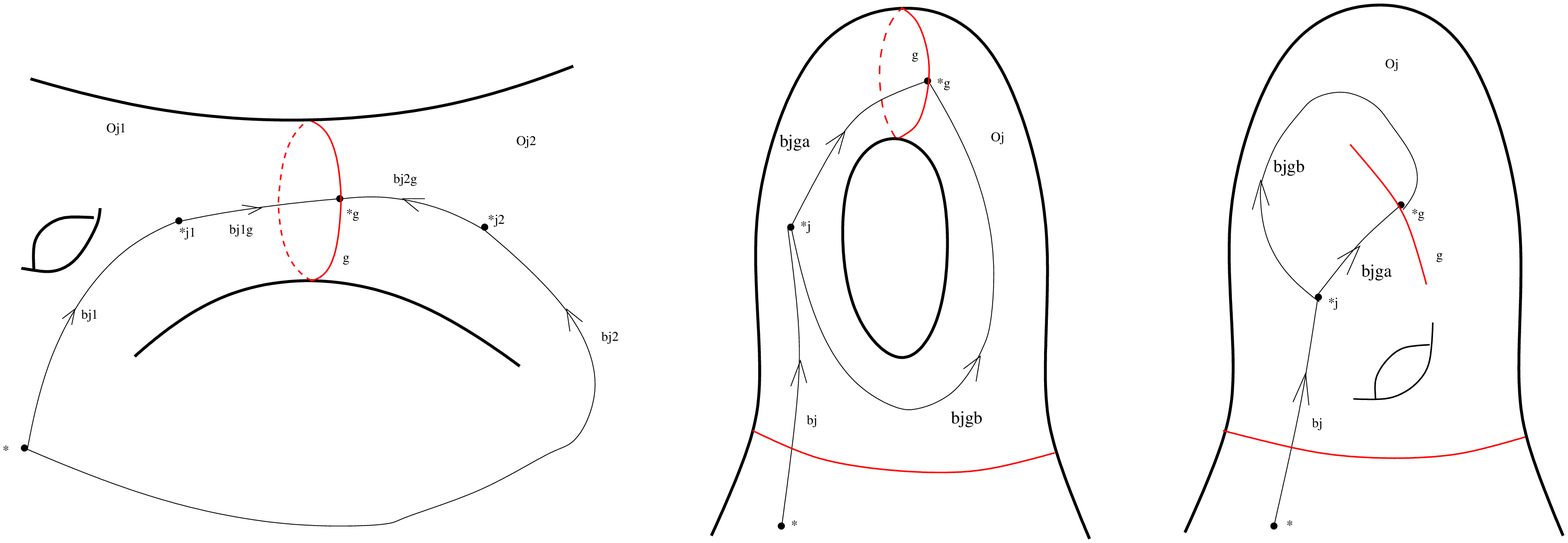}
\caption{}
\label{fig:CaminosBeta}
\end{figure}

\subsection{The polygon $P$.}\label{sec:PolygonP} 
By a {\it closed polygon} we mean a space homeomorphic to a closed Euclidean polygon with a finite number of sides. In particular, we can speak of {\it  sides} (that we will consider  open segments) and {\it  vertices} of the polygon. 
A {\it polygon} will be a closed polygon minus  some subset of its vertices and edges. The {\it boundary } of the polygon is the union of its sides and vertices.
 
In next lemma we will cut the orbifold  $\O$ into  a   polygon with some additional requirements. This will be used to label the points in the covering space $S$ in terms of  $\O$ and $\Phi$. In the lemma, the arcs $\b_j,\b_{\g},\b_{j,\g}$ are thought both as   maps from the unit interval to $\O$ and as the images if these maps.

\begin{lem}\label{lem:PolygonP}
There is   polygon $P$  and a continuous bijection $b\colon P\to \O$ so that 
\begin{itemize}
\item[(a)] $ b^{-1}({\rm Sing}\, \O) $ is contained in the boundary of $P$;  
\item[(b)] for each $j=1,\dots ,r$ and for each $\g\in\S$, the maps $b^{-1}\b_j$ and $b^{-1}\b_{\g}$ are continuous.
\item[(c)] for each $j $ and each $\g\in\S_j\cap \S^2$, the restriction of the map  $b^{-1}\b_{j,\g}  $  to the interval $[0,1)$ is continuous;
\item[(d)] for each $j$ and  $\g \in\S_j\cap \S^1$, the restriction of the maps  $b^{-1} \b_{j,\g}^a, b^{-1} \b_{j,\g}^b   $  to $[0,1)$ are continuous.
\end{itemize}
\end{lem}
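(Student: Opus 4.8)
The goal is to cut the orbifold $\O$ open along a suitable 1-complex so that the result is a (topological) polygon, and to arrange that the arcs $\b_j, \b_\g, \b_{j,\g}$ behave well with respect to this cutting. The natural candidate for the cutting locus is a contractible graph $\Theta\subset\O$ that contains the "skeleton'' of all our chosen paths: namely $\Theta = \bigcup_j \b_j \cup \bigcup_{\g\in\S^2}\b_{j(\g),\g} \cup \bigcup_{\g\in\S^1}(\b^a_{j,\g}\cup\b^b_{j,\g})$, together with enough additional arcs (one from $*_j$ to each cone point of $\O_j$, and one per handle of each $|\O_j|$) to make $\O$ cut along $\Theta$ simply connected. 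By the choices in Section~\ref{sec:ChoicePaths} these arcs are simple and pairwise disjoint except at common endpoints, so $\Theta$ is an embedded finite graph.

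\textbf{Step 1: reduce to cutting a surface along a tree.} First I would pass from $\O$ to the underlying surface $|\O|$, remembering the cone points as marked points on the boundary-to-be. Choose a maximal tree $\Theta'\subseteq\Theta$ (it will in fact be almost all of $\Theta$ once we add the arcs to the cone points; the only cycles come from the handles of the $|\O_j|$, and for each such handle we include exactly one more arc, turning it into the standard $aba^{-1}b^{-1}$ situation). The standard fact (classification of surfaces / existence of a fundamental polygon) is that cutting a closed orientable surface of genus $\tau$ with $n$ marked points along a connected graph carrying all the $\pi_1$ and passing through all marked points yields a closed disc, i.e.\ a closed polygon; here I want the slightly more precise bookkeeping that $b^{-1}(\{\text{cone points}\})$ and $b^{-1}(\S^1\text{-arcs' endpoints})$ land on the vertex set of $P$, which gives~(a).

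\textbf{Step 2: define $b$ and check continuity of the lifts of the paths.} Let $P$ be the closed polygon together with the identification map $q\colon \overline P\to |\O|$ from Step 1, and set $P = \overline P$ minus the preimages of the cone points (so $P$ is a polygon in the sense of Section~\ref{sec:PolygonP}) and $b = q|_P$. Then $b$ is a continuous bijection onto $\O$ and (a) holds by construction. For (b): each $\b_j$ and each $\b_\g$ was built entirely out of arcs lying in $\Theta$, hence in the cutting locus; but a path contained in the cutting locus need not lift continuously unless we are careful about which ``side'' it runs on. The point is that $\b_j$ and $\b_\g$ start at $*=*_1$ and we can orient the cutting so that these particular paths run along a single chosen copy of each arc of $\Theta$ in $\overline P$ — concretely, declare the tree $\Theta'$ to be cut ``last'' and read off $\overline P$ so that $\Theta'$ lifts to a tree embedded in $\overline P$; then $b^{-1}\b_j, b^{-1}\b_\g$ are paths in that embedded tree, hence continuous. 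For (c) and (d): the arcs $\b_{j,\g}$ (resp.\ $\b^a_{j,\g},\b^b_{j,\g}$) have their terminal point $*_\g$ on the curve $\g$, which may or may not lie in the cutting locus; the interior portion $[0,1)$ maps into a region where $b^{-1}$ is single-valued and continuous, which is exactly what is asserted. The asymmetry (only $[0,1)$, not $[0,1]$) reflects that $*_\g\in\g$ can be a point where $b^{-1}$ is two-valued.

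\textbf{Main obstacle.} The genuinely delicate part is not the existence of a fundamental polygon — that is classical — but organizing the cutting graph so that all of $\b_j,\b_\g$ lift continuously to $\overline P$ simultaneously while the $\b_{j,\g}$ lift continuously only on $[0,1)$, and so that the cone points land on vertices. This is a compatibility bookkeeping issue: one must choose the spanning tree $\mathcal T$ of $\mathcal R$, the arcs to the cone points, and the handle-cutting arcs so that the resulting graph $\Theta$, when used to cut, has the $\b_j$-arcs appearing as a subtree (no $\b_j$ crosses a cut transversally). I would handle this by building $\Theta$ inductively over the vertices of $\mathcal T$, at each stage only adding arcs disjoint from the interiors of the previously chosen paths (possible by the general position arrangement already made in Section~\ref{sec:ChoicePaths}), and then invoking that cutting a surface along a connected graph through which a chosen spanning tree of its $\pi_1$ passes produces a disc. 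Everything else is routine point-set verification.
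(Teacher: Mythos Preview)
Your strategy is essentially the opposite of the paper's, and while it is not unreasonable, the execution has genuine gaps.

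\textbf{How the paper does it.} The paper does \emph{not} put the $\b$-paths into the cutting locus. It cuts first along the curves $\g\in\S^1$ and the curves $\g$ in $\mathcal R\setminus\mathcal T$, observes that after these cuts the set $\mathcal B=\bigcup_{j,\g}(\b_{j,\g}\cup\b^a_{j,\g}\cup\b^b_{j,\g}\cup D_\g)$ is contractible and meets the boundary, and then completes the cutting to a polygon by curves chosen to \emph{avoid} $\mathcal B$ entirely. As a result each $\b_j$ is disjoint from the final cutting locus $W$, and each $\b_\g$, $\b_{j,\g}$, $\b^a_{j,\g}$, $\b^b_{j,\g}$ meets $W$ at most at its terminal point $*_\g$. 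Conditions (b)--(d) then follow almost for free once one removes the correct copies of boundary pieces to make $b$ bijective. The whole point of the construction is to make the continuity statements automatic rather than something to be arranged by hand.

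\textbf{The gaps in your version.} You put the $\b$-paths \emph{into} the cutting graph $\Theta$, which forces you to face precisely the difficulty the paper avoids. First, your definition ``$P=\overline P$ minus the preimages of the cone points, $b=q|_P$'' does not yield a bijection: every edge of $\Theta$ has two preimages on $\partial\overline P$ and every degree-$d$ vertex has $d$ preimages, so you must excise roughly half of $\partial\overline P$. The crux is then to choose the retained half so that $\bigcup_j\b_j\cup\bigcup_\g\b_\g$ lands in it as a connected subtree; your ``cut $\Theta'$ last so that it lifts to a tree'' is a slogan, not an argument, since cutting along a tree produces a boundary circle on which each edge appears twice and each vertex of degree $d$ appears $d$ times, and you have not shown how to select coherent representatives. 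Second, two smaller errors: the cycles of $\Theta$ do not come only from handles --- each pair $\b^a_{j,\g},\b^b_{j,\g}$ with $\g\in\S^1$ already forms a loop through $*_j$ and $*_\g$ --- and reducing a genus-$\tau$ surface to a disc needs $2\tau$ independent cycles in the cutting graph, not ``one arc per handle''.
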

\begin{proof}
We will obtain $P$  in several steps, by cutting the orbifold $\O$ along a set $W$  of curves. 
At each step, when we say ``cut $R$ along $Z\subset R$" we mean ``remove $Z$ from $R$ and then do the metric completion". In this way, we obtain a pair $(Q,b)$ where  $Q$ is the  space  $(R\menos Z)^c$, and   $b$ is a surjective map $b\colon Q\to R$ which is the identity in $R\hspace{-0.05cm}-\hspace{-0.05cm}Z$ and extends by continuity to the remaining points of $Q$.

(I) Cut $\O$ along all the curves $\g\in \S^1$ (if any).  We obtain $(Q_1,b_1)$, with    $Q_1$ an orbifold with boundary (maybe empty). Notice that, if $\g\in\S$ is an arc contained in $\bar \O_j$ then   the curve $ (\b_{j,\g}^b)^{-1}\b_{j,\g}^a$ separates $Q_1$ into two components, one of them is topologically a disc, which we denote by $D_{\g}$.

(II) Cut $Q_1$ along 
all the curves $\g$ in $\mathcal R\menos \mathcal T$, where $\mathcal T$ is the spanning tree in the graph $\mathcal R$ defined in Section \ref{sec:Multicurve}. We obtain $(Q_2,b_2)$, with $Q_2$ another orbifold with boundary. 

Consider the set 
 $$ 
\mathcal B=\bigcup_{\substack{j=1,\dots, r\\ \g\in \S}} 
 ( \b_{j,\g}\cup \b_{j,\g}^a\cup \b_{j,\g}^b\cup   D_{\g})=\bigcup_{j=1,\dots,r}\left(\bigcup_{\g\in\S_j}  ( \b_{j,\g}\cup \b_{j,\g}^a\cup \b_{j,\g}^b\cup   D_{\g}) \right)=\bigcup_{j=1,\dots,r}\mathcal B_j,
 $$
 where we have denoted by $\mathcal B_j$ the set into brackets.  
 
 We claim that,  after step (II), the set $\mathcal B$ is contractible.
  Indeed, for each $j$, the set $ \cup_{\g\in\S_j\cap \S^2}\b_{j,\g}$ is contractible, by construction. Actually, it is homeomorphic to a tree. Adding to this set the curves $\b_{j,\g}^a,\b_{j,\g}^b$  for $\g\in\S_j\cap \S^1$ produce loops, but these loops are cut into arcs in step (I).  Thus, each set $\mathcal B_j$ is contractible. Finally, after step (II), the  sets $\mathcal B_j$  are joined among them following  the rules given by the tree $\mathcal T$, i.e., if $\O_j, \O_{j'}$ are connected in the tree, then $\mathcal B_j$ and $\mathcal B_{j'}$ share a single point. It follows that $\mathcal B$ is contractible.

Notice also that  $Q_2\menos \mathcal{B}$ is connected, because $\mathcal{B}$ intersects each boundary component of $Q_2$ in a connected set.

(III) This step is technical: we   cut along some curves, but these curves will not be in the cutting locus $W$. Cut    $Q_2$ along $\mathcal B$ to   obtain $(Q_3,b_3)$, where  $Q_3$   is   homeomorphic to a connected  orbifold with non-empty boundary, and    $b_3\colon Q_3\to Q_2$. 
Notice that:   
\begin{itemize}
\item if $Q_2$ has no boundary, then 
 $b_3^{-1}(\mathcal B)$ is the only boundary component of $Q_3$;
 \item on the other hand, if $Q_2$ has non-empty boundary, then  $b_3^{-1}(\mathcal B)$ is strictly contained in a boundary component of $Q_3$. This is because, in this case,  $\mathcal B$  has at least one point in $\partial Q_2$, since  $\partial Q_2$ corresponds to the cuttings done in steps (I) and (II), and these cuttings are along curves $\g\in \S$, which always contain the points $*_{\g}$.
 \end{itemize}
 In both cases let $Y_1$ the boundary component of $Q_3$ containing $b_3^{-1}(\mathcal B)$.
 
 (IV)
Now we want to cut $Q_3$ into a polygon, with  ${\rm Sing}\,Q_3$ in its boundary,  and avoiding $b_3^{-1}(\mathcal B)$. This can be easily done: first cut along non-separating curves $z_i$ to obtain an orbifold  $Q_4$ of genus 0. Let $Y_1,Y_2,\dots, Y_s$ be the set of boundary components and cone points   of $Q_4$. 
If $Y_1$ was  the only boundary component of $Q_3$, take simple  and disjoint (except at their endpoints) arcs $w_i, i=2,\dots, s$   joining the cone points  $Y_i, Y_{i+1}$. 
In the other case,  take $w_i, i=1,\dots, s$ as before, but only taking care that $w_1$ does not intersect  $b_3^{-1}(\mathcal B)$.

 (V) Let $W\subset P$ be the union of (the images under the $b_i$ of) the curves we have cut along in steps (I), (II) and (IV). Then cutting $\O$ along $W$ we obtain 
 $(\bar P, \bar b)$, where  $\bar P$ is a closed polygon  satisfying  (a). 
 Notice that $\bar b^{-1}$ is well defined and continuous when restricted to $\O\menos W$.

Finally, we will remove some parts of the boundary of $\bar P$ to obtain $(P,b)$ so that $b$ be   a bijection and so that all the conditions (a)-(d) be satisfied. Notice that (a), (c) and (d) will be automatically satisfied, since the arcs $\b_{j,\g},\b_{j,\g}^a, \b_{j,\g}^b$ only intersect $W$ in their final endpoint. Thus, we only need to care about (b). 

Notice that $W\cap \b_j=\emptyset$ for any $j=1,\dots, r$, because $\b_j$ only intersects curves $\g$ in the  
tree $\mathcal T$, and these curves are not intersected by $W$ (as seen in  (III)).  Thus, $b^{-1}\b_j$ will be  continuous, independently of which subset of $\partial \bar P$ we remove.    
On the other hand, take $\g\in \S^1$, and  notice that $W\cap \b_{\g}=\{*_{\g}\}$. Then $\bar b^{-1}(\b_{\g})$ contains two preimages of $*_{\g}$, one of them isolated. Then, we remove from $\partial \bar P$ the preimage of $\g$ containing the isolated preimage of $*_{\g}$.     We proceed in similar way with the  $\g\in \S^2$ such that $\g\subset W$ (if $\g\not\subset W$, then we need do nothing). Finally, we remove parts of the boundary of $\bar P$ in any way, in order to obtain a $(P,b)$ where now $b$ is a bijection. 
\end{proof}

\subsection{Notations concerning the covering and main result.} 
It will be key for Theorem \ref{thm:Combinatotial} a labeling for the points in the covering space $S$.
  \begin{itemize}
  \item[(1)] We fix a point $\tilde *$ in $p^{-1}(*)$. 
  
  \item[(2)] Let $\tilde b\colon P\to S$ be the unique lift of $b$ such that $\tilde b (b^{-1}(*))=\tilde *$, and let $\widetilde P=\tilde b(P)$. If $y\in\O$ we call $\tilde{y}$ the only point in $p^{-1}(y)\cap \widetilde{P}$.
  \item[(3)] If $\a$ is a path in $\O$ with origin in $*$, we denote by $\tilde{\a}$ its unique lift starting at $\tilde{*}$.
  \item[(4)] We have that $S=\cup_{g\in G} g\cdot \widetilde{P}$, and the union is disjoint except for the preimages of the cone points of $\O$ (notice that $e\cdot \widetilde{P}=\widetilde{P}$, where $e$ is the identity element of $G$). In this way, if $y\in \O$ is not a cone point, then $p^{-1}(y)=\{g\cdot \tilde{y}\,\colon\, g\in G\}$.
  
  \end{itemize}

 \begin{teo}\label{thm:Combinatotial}
Let $(S,G,\iota)$ be an action of the group $G$ on $S$, let $p\colon S\to \O$ be the associated branched covering and let  $\Phi\colon \pi_1(\O,*)\to G$ be the associated epimorphism.
\\
Let  $\Sigma=\{\g_1,\dots, \g_k\}$ be a multicurve   in $\O$ which decomposes $\O$ in the suborbifolds $\,\O_1,\dots, \O_r$,
and let  $\mathcal{G}_{\Sigma}$ be the stable graph   determined by $\S$ under $p$.
  We consider the homomorphisms $\Phi_j$ and $\Phi_{\g}$ and follow the notations as   explained above. 
  
   Then: 
 \begin{itemize}

 \item[(i)] (Number of vertices  of $\mathcal{G}_{\S}$.) 
 The number of vertices of  $\mathcal{G}_{\S}$ is equal to
 $$
 V =  
  \frac{|G|}{|{\rm Im}\, \Phi_1|} + \dots, +  \frac{|G|}{|{\rm Im}\, \Phi_r|}
 $$
The vertex of $\G_{\S}$  denoted  by $V_{j,g\Im \Phi_j}$ or by $V_{j,g}$ (for $j=1,\dots, r$, $g\in G$)  is   the component of $p^{-1}(\O_{j })$ which contains the points $g {\rm Im}\Phi_{j }\cdot \widetilde *_{j }$.
The map $V_{j,g}\mapsto (\O_j, g{\rm Im}\Phi_j  )$ gives a bijection between the set of vertices 
   of $\G_{\S}$ and  the set
 $
\{ 
  (\O_j, g{\rm Im}\Phi_j  )
 \; : \; j=1,\dots, r,  \quad   g\in G
 \}.
 $  
  \item[(ii)]  (Number of edges of $\mathcal{G}_{\S}$.)
The number of edges  of  $\mathcal{G}_{\S}$ is equal to
 $$
E =  
  \frac{|G|}{|{\rm Im}\, \Phi_{\g_1}|} + \dots, +  \frac{|G|}{|{\rm Im}\, \Phi_{\g_k}|}.
 $$  
The edge of $\G_{\S}$  denoted by $E_{\g,g\Im\Phi_{\g}}$ or by $E_{\g,g}$ 
  (for $\g\in\S$, $g\in G$)  is   the component of $p^{-1}(\g )$ which contains the points $g {\rm Im}\Phi_{\g}\cdot \widetilde *_{\g }$.
The map $E_{\g,g}\mapsto (\g, g{\rm Im}\Phi_{\g}  )$ gives a bijection between the set of edges  
and the set
 $
\{ 
 \left(\g_i, g{\rm Im} \Phi_{\g_i}  \right)
 \; : \; i=1,\dots, k,  \quad   g\in G
 \}.
 $  

 \item[(iii)] (Degrees of    vertices.) The degree of the vertex $V_{j,g}$ 
is equal to 
 $$
 D_{j}= |\Im \Phi_j| \left( \sum_{\g\in\S_j\cap\S^2}\frac{1}{|\Im\Phi_{\g}|} +2\sum_{\g\in\S_j\cap\S^1}\frac{1}{|\Im\Phi_{\g}|} \right).
  $$
 \item[(iv)]  (Weights of  vertices.) The weight $w^j$
 of the vertex $V_{j,g }$ is equal to   
$$\quad \quad
w^j
=1 
 -\frac{1 }{2}\left(|\Im \Phi_j|\, \chi(\O_j)+ D_j\right).  
$$ 
 
\item[(v)]  (Edges connecting vertices.) Let $E_{\g,g}$ be an edge, $\g\in \S$. 
\begin{itemize}
\item[Case 1.] If $\g\in\S_{j}\cap \S_{j'}$, for $j\not=j'$,  then $E_{\g,g}$ joins $V_{j_1, g\Phi(\b_{\g}\b_{j_1\g}^{-1}\b_{j_1}^{-1})}$ and  $V_{j_1, g\Phi(\b_{\g}\b_{j_2\g}^{-1}\b_{j_2}^{-1})}$ (notice that one of the paths  $\b_{\g}\b_{j_i\g}^{-1}\b_{j_i}^{-1}$, $i=1,2$ is trivial).
\item[Case 2.]  If $\g\in \S^1\cap \S_j$, then 
$E_{\g,g}$ joins  $V_{j, g }$ and  $V_{j, g\Phi(\b_{\g} (\b_{j,\g}^b)^{-1}\b_{j}^{-1})}$
( see Figure \ref{fig:CaminosBeta}).
  
\end{itemize}

 \end{itemize}
 \end{teo}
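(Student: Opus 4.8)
The plan is to use that, by Lemma~\ref{lem:multicurves}, $p^{-1}(\S)$ is an admissible multicurve in $S$, so that $\G_\S$ is \emph{by definition} the stable graph of the stable surface $X=S/p^{-1}(\S)$: its vertices are the connected components of $S\menos p^{-1}(\S)=\bigsqcup_{j=1}^{r}p^{-1}(\O_j)$; its edges are the connected components of $p^{-1}(\S)=\bigsqcup_{i=1}^{k}p^{-1}(\g_i)$ (each of which is, again by Lemma~\ref{lem:multicurves}, a single simple closed curve, hence one node of $X$); the weight of a vertex is the genus of the corresponding part; and the degree of a vertex is the number of half-edges at it, i.e.\ the number of pairs (component of $p^{-1}(\S)$, local side of it) whose adjacent part equals that vertex. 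In this way the whole statement becomes a counting problem whose engine is Lemma~\ref{lem:stabilizers}, together with the orbifold Euler characteristic identity $\chi(S)=|G|\chi(\O)$ and the path-lifting facts collected at the beginning of Section~\ref{sec:covering}; the polygon $P$ of Lemma~\ref{lem:PolygonP} and the resulting labels $\widetilde y\in\widetilde P$ are what keep all the local-to-global identifications unambiguous.

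I would first settle (i) and (ii). Applying Lemma~\ref{lem:stabilizers}(c) with $X=\O_j$, the number of components of $p^{-1}(\O_j)$ equals $|G|/|\Im\Phi_j|$; summing over $j$ gives the formula for $V$. By parts (a) and (b) of that lemma, the component $g\cdot\mathcal C_j$ is exactly the one containing $g\,\Im\Phi_j\cdot\widetilde *_j$, and $g\cdot\mathcal C_j=g'\cdot\mathcal C_j$ if and only if $g'\in g\,\Im\Phi_j$ (since ${\rm Stab}_{\mathcal C_j}=\Im\Phi_j$); this is precisely the claimed bijection $V_{j,g}\mapsto(\O_j,g\,\Im\Phi_j)$. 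The identical argument with $X=\g_i$, using that the $p^{-1}(\g_i)$ are pairwise disjoint and that each of their components is a simple closed curve contributing one edge to $X$, gives (ii).

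For (iii) and (iv) I would work one part at a time. Fix $P=V_{j,g}$; since $G$ permutes the $|G|/|\Im\Phi_j|$ parts over $\O_j$ transitively and $p$ has degree $|G|$, the restriction $p\colon P\to\O_j$ is a branched covering of degree $|\Im\Phi_j|$. Now, because $p$ is a local homeomorphism off the cone points, for $\g\in\S_j$ each component $\delta$ of $p^{-1}(\g)$ is a two-sided simple closed curve whose number of local sides lying over $\O_j$ equals the number of local sides of $\g$ facing $\O_j$, namely $1$ if $\g\in\S^2$ and $2$ if $\g\in\S^1$. Hence the total number of pairs (component of $p^{-1}(\g)$, side over $\O_j$) is $\frac{|G|}{|\Im\Phi_\g|}$ if $\g\in\S^2$ and $2\,\frac{|G|}{|\Im\Phi_\g|}$ if $\g\in\S^1$; since this incidence is $G$-equivariant and $G$ acts transitively on the parts over $\O_j$, each such part receives the $(|G|/|\Im\Phi_j|)^{-1}$-th fraction of these, and summing over $\g\in\S_j$ yields the stated $D_j$. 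Then (iv) is immediate: $P$ is a smooth surface of genus $w^j$ with $D_j$ punctures, so $\chi(P)=2-2w^j-D_j$, while orbifold Riemann--Hurwitz for $P\to\O_j$ gives $\chi(P)=|\Im\Phi_j|\,\chi(\O_j)$; equating the two and solving for $w^j$ gives the displayed formula.

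Finally, (v) is a basepoint-chasing argument. As $E_{\g,g}=g\cdot E_{\g,e}$, where $E_{\g,e}$ is the component of $p^{-1}(\g)$ through $\widetilde *_\g=\widetilde{\b_\g}(1)$, it suffices to identify the parts on the two sides of $E_{\g,e}$ and then translate by $g$. Fixing an index $j'$ with $\g\subset\bar\O_{j'}$ and looking at the side of $\g$ facing $\O_{j'}$, one checks --- using the covering-homeomorphism fact of Section~\ref{sec:covering} applied to the loop $\b_\g\b_{j',\g}^{-1}\b_{j'}^{-1}$ based at $*$ --- that $\widetilde *_\g$ is, on that side, the endpoint of the lift of $\b_{j',\g}$ issuing from the point $\Phi(\b_\g\b_{j',\g}^{-1}\b_{j'}^{-1})\cdot\widetilde *_{j'}$; since that point lies in the component $V_{j',\Phi(\b_\g\b_{j',\g}^{-1}\b_{j'}^{-1})}$ (Lemma~\ref{lem:stabilizers}(b)), $E_{\g,e}$ is adjacent to that part on that side. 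Specializing to the two indices $j_1,j_2$ in Case~1 (with one correction element trivial because $\b_\g=\b_{j_1}\b_{j_1,\g}$ by the choices in Section~\ref{sec:ChoicePaths}) and to the paths $\b_{j,\g}^a$ and $\b_{j,\g}^b$ in Case~2 (with the $a$-side correction trivial since $\b_\g=\b_j\b_{j,\g}^a$), and translating by $g$, gives both cases. I expect (i), (ii), (iv) and (v) to be essentially routine once Lemma~\ref{lem:stabilizers} and the constructions of Sections~\ref{sec:Multicurve}, \ref{sec:ChoicePaths} and \ref{sec:PolygonP} are in place; the main obstacle is (iii), and inside it the arcs $\g\in\S^1$ joining two order-$2$ cone points --- there the local model of $p$ near a preimage of such a cone point is a branched double cover, and one must verify carefully that both local sides of a component of $p^{-1}(\g)$ do lie over the single side of $\g$ in $\O$, which is exactly what makes the factor $2$ in front of $\sum_{\g\in\S_j\cap\S^1}$ correct; this, and keeping the component labelling consistent across the successive cuts of Lemma~\ref{lem:PolygonP}, is where the real work lies.
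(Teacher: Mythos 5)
Your proposal is correct and follows essentially the same route as the paper: Lemma \ref{lem:stabilizers} gives (i) and (ii), Riemann--Hurwitz gives (iv), and (v) is the same basepoint-chasing through the lifts of the $\b$-paths using the polygon labels of Lemma \ref{lem:PolygonP}. The only minor deviation is in (iii), where you count all branch incidences over $\O_j$ globally and divide by the number of parts via $G$-transitivity, whereas the paper applies Lemma \ref{lem:stabilizers}(c) to the boundary circles of tubular neighborhoods of the curves inside the covering $p|_{V_{j,g}}\colon V_{j,g}\to\O_j$ (with $\g^1\simeq\g^a\g^b$ producing the factor $2$ for arcs); both computations agree.
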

 
 






\begin{rem}
  Parts (i) to (iv) of the previous theorem depend only on the number of elements of the subgroups $\Im \Phi_j$ and $\Im \Phi_{\g}$ for all $j=1,\dots, r$ and $\g\in \S$. 
\end{rem}

\begin{proof} 
 (i) The vertices of $\G_{\S}$ are the connected components of $S\menos p^{-1}(\S)$. Each of these connected components is a covering of  one of the suborbifolds $\O_j$, $j=1,\dots, r$. Thus, we can also see the vertices of $\G_{\S}$ as the connected components of $p^{-1}(\O_j)$ for all $j=1,\dots, r$. Then, the result follows from
  Lemma \ref{lem:stabilizers}.

 \medskip 
 (ii) The number $E$ of edges of $\G_{\S}$ is the number of connected components of $p^{-1}(\S)$, which, in turn, is
  $E=\S_{i=1}^{k}n_{\g_i}$, where $n_{\g_i}$   
 is the number of connected components of   $p^{-1}(\g_i)$ (since the $\g_i$ are disjoint, their preimages are also disjoint). As in part (i), the result then follows from Lemma \ref{lem:stabilizers}.

  \medskip
  
  (iii) Let $V_{j,g}$ be a vertex of $\G_{\S}$.
  For each $\g\in\S$ that is contained in $\bar \O_j$, we consider a small closed tubular neighborhood $U_{\g}$ not intersecting any other cone point or  $\g_i$. Thus, if $\g$ is a closed curve,   $U_{\g}$ is homeomorphic to an annulus, while if $\g$ is an arc,  $U_{\g}$ is homeomorphic to a disc. 
  
Let $\g\in \S_j$, i.e., $\g\subset \bar\O_j$. We distinguish three cases. 
\begin{itemize}
\item[1.]   $\g\in  \S^2$, i.e., $\g$ is  closed and contained in  another $\bar \O_{j'}$, for $j'\not=j$.  In this case, only one boundary component of  $U_{\g}$ is contained in $\O_j$. We call this component $\g^1$.
\item[2.]   $\g\in \S^1$ and is a closed curve.   In this case, the two boundary components $\g^1,\g^2$ of $U_{\g}$ are contained in $\O_j$
\item[3.] $\g$ is an arc (thus, $\g\in \S^1$).   In this case, $U_{\g}$ has just one boundary component $\g^1$, which is contained in $\O_j$
\end{itemize}

 
  In this situation, the degree of $V_{j,g}$ is the number of connected components of 
  $$ \bigcup_{\g\in \S_j\cap \S^2 }(p|_{V_{j,g }})^{-1}(\g ^1)\cup 
  \bigcup_{ \substack{\g\in \S_j\cap \S^1\\ \g \hbox{\tiny{ closed curve}} }}\Big((p|_{V_{j,g }})^{-1}(\g ^1) \cup  (p|_{V_{j,g }})^{-1}(\g ^2)\Big) \cup
   \bigcup_{ \substack{\g\in \S_j\cap \S^1\\ \g \hbox{\tiny{ arc}} }}(p|_{V_{j,g }})^{-1}(\g ^1) 
 $$

  The number of components of $(p|_{V_{j,g}})^{-1}(\g^i)$ is computed using Lemma \ref{lem:stabilizers}(c) applied to the covering map $p|_{V_{j,g}}\colon V_{j,g}\to \O_j$ and taking $X=\g^i$. The automorphism group of this covering is the stabilizer of $V_{j,g}$, which, by Lemma \ref{lem:stabilizers}(a), is equal a conjugate of  $\Im \Phi_j$. 
 On the other hand, when $\g$ is a closed curve, then  $\g^i$ is homotopic to $\g$ and therefore $\Im\Phi_X=\Im\Phi_{\g}$. While, if $\g$ is an arc, then $\g^1$ is homotopic to $\g^a\g^b$ (recall the notation from Section \ref{sec:Orbifolds}), and therefore $|\Im\Phi_X|=\frac{1}{2}|\Im\Phi_{\g}|$. Putting all together, we obtain the desired result.

  \medskip

 (iv) The weight $w^j$
 of the vertex $V_{j,g }$ is determined by the regular branched covering 
  $p|_{V_{j,g}}\colon V_{j,g}\to \O_j$. Explicitly, if  
  the orbifold $\O_j$ has  signature $(\tau,c;m_1,\dots, m_{n_j})$,  then  
 $p|_{V_{j,g}} $  has $ |\Im \Phi_j|$ sheets and $n_j$ branched points  or orders $m_1,\dots, m_{n_j}$. By the Riemann-Hurwitz formula we have that
 $
 \chi(V_{j,g}) =|\Im \Phi_j|\,\chi(\O_j)
 .$
 Since by part (iii) we know the number of boundary components of $V_{j,g}$, the relation $\chi(V_{j,g})=2-2w^j-c$ between the Euler characteristic and the genus of a surface gives the desired result.

 \medskip
 
(v) The proof of this part is based in the following two observations. 
\begin{itemize}
\item[(a)] Let $\tilde \a$ be a path  in $S$ with $p\tilde \a(0)=x, p\tilde \a(1)=y  $. If the map  $b^{-1}  p  \tilde\a $ is continuous, and $\tilde \a(0)=g\cdot \tilde x$, then $\tilde\a(1)=g\cdot \tilde y$. 
\item[(b)] Let $\g\in\S$, $\g\subset\bar\O_j$.   Consider   $E_{\g,g}$,  which contains the point $g\cdot \tilde*_{\g}$. Suppose there is  a path $\tilde \a$ from $g'\cdot \tilde *_j$ to $g\cdot \tilde*_{\g}$ such that the restriction of the map $b^{-1}p\tilde \a  $   to the interval $[0,1)$ is continuous. Then $E_{\g,g}$
is in the boundary of $V_{j,g'} $.
\end{itemize}
If $b^{-1}  p  \tilde\a $ is continuous, then it is a path in $P$ whose lift to $S$ is $\tilde \a$. 
Then,  by the notations before the theorem, we have (a). For (b), 
since $b^{-1}p\tilde \a([0,1)) $ is connected and $\tilde \a(0)=g'\cdot \tilde *_j$, by (a) we have that   all the points $\tilde \a(t), t\in[0,1)$ are also labeled as $g'\cdot \tilde y$, where  for $y\in\{ p\tilde\a(t) \, :\, 0\leq t<1\}$.   Thus, all the points $\tilde \a(t), 0\leq t<1$ are in 
$V_{j,g'}$ and, since these points converge to $g\cdot \tilde *_{\g}$, the result follows.

 \medskip
 
Case 1:  $\g\in \S^2$, i.e.,  $\g\subset \bar \O_{j_1}\cap\bar \O_{j_2}$. Then, the edge $E_{\g,g}$ is in the boundary of a component  which projects onto $\O_{j_1}$ and another which projects onto $\O_{j_2}$. We need to know the precise labels of these components, and this can be done using the previous observations.
Indeed,
consider in $\O$ the loops  $\Delta_i=\b_{\g}\b_{j_i,\g}^{-1}\b_{j_i}^{-1}$ $i=1,2$ (notice that one of them is trivial, by the definition of $\b_{\g}$). Let $g_i=\Phi(\Delta_i)$, so that $\tilde \Delta_i(1) =g_i\cdot \tilde *$. Since $b^{-1}\b_{\g} $ is continuous (by Lemma \ref{lem:PolygonP}(b)) and $\tilde \Delta_i(0)=\tilde{*}$, by  (a) above we have that 
  $\tilde{\Delta_i}$ contains the point $\tilde*_{\g}$.
 Also, since the   $b^{-1}\b_{j_i}$ is continuous (by Lemma \ref{lem:PolygonP}(b)) and $\b_{j_i}$ joins $*$ and $*_{j_i}$,  then the lift of $\b_{j_i}$ contained in $\tilde \Delta_i$ joins the points $g_i\cdot \tilde *$ and $g_i\cdot \tilde *_{j_i}$. Thus, the  lift of $\b_{j_i,\g}$ contained in $\tilde \Delta_i$ goes from $g_i\cdot \tilde *_{j_i}$ to  $\tilde *_{\g}$. Since $(b^{-1}\b_{j_i,\g})|_{[0,1)} $ is continuous (by Lemma \ref{lem:PolygonP}(c)), then by (b) above, $E_{\g,e}$ is in the boundary of $V_{j_i,g_i}$. In terminology of the graph $\G_{\S}$, the edge $E_{\g,e}$ joins the vertices $V_{j_1,g_1}$,   $V_{j_2,g_2}$.

Now, for a general $g\in G$, $g(E_{\g,e})$ joins $g(V_{j_1,g_1})$ and  $g(V_{j_2,g_2})$, i.e., $ E_{\g,g}$ joins $V_{j_1,gg_1}$ and  $V_{j_2,gg_2}$, as stated.

 \medskip
 
Case 2:   $\g\in \S^1$, assume $\g\subset\bar\O_j$. 
 We proceed in a similar way as before.
 Consider the loops   $\Delta_1=\b_{\g}(\b_{j,\g}^a)^{-1}\b_{j}^{-1}$ (which is trivial, by the definition of $\b_{\g}$) and $\Delta_2=\b_{\g}(\b_{j,\g}^b)^{-1}\b_{j}^{-1}$. Let $g_i=\Phi(\Delta_i), i=1,2$ ($g_1$ is the identity). Then $\tilde \Delta_i$ starts at $\tilde *$ and ends at $g_i\cdot\tilde{*}$ and contains one preimage of $*_{\g}$ and one preimage of $*_j$. Since $b^{-1}\b_{\g} $ is continuous, the first preimage is $\tilde *_{\g}$. And since $b^{-1}\b_j$ is continuous, then the second preimage is $g_i\cdot \tilde *_j$. Then, $g_i\cdot \tilde *_j$  is joined to $\tilde *_{\g}$    through a lift of the path   $\b_{j,\g}^a $, for $i=1$, or the path $\b_{j,\g}^b $, for $i=2$. Since $(b^{-1}\b_{j,\g}^a)|_{[0,1)} $ and  $(b^{-1}\b_{j,\g}^b )|_{[0,1)}$ are continuous 
  (by Lemma \ref{lem:PolygonP}(c)(d)),   by (b) above we have  that   
  $E_{\g,e}$ bounds the components $V_{j,g_1}$ and $V_{j,g_2}$. For general $g\in G$,  $E_{\g,g}$ bounds the components $V_{j,gg_1}$ and $V_{j,gg_2}$, as is claimed in the statement.
  \end{proof}

 Detailed examples of application of this theorem can be seen in Section \ref{sec:Pyramids}.

\section{Examples: Pyramid families}\label{sec:Pyramids}

\subsection{Pyramidal action.}
In this section   we consider the 2-dimensional equisymmetric stratum of   pyramidal hyperbolic surfaces $\mathcal P_n\subset \mathcal B_n$, which we define below. This stratum was also studied in \cite{victor} in order to  to determine the Riemann period matrices of the  jacobians of its elements. Here, we work out the characterization of 
the   strata of $\widehat{\M_n}$   intersected by $\widehat{\mathcal P_n}$. This is done  by mainly using   Theorem 5.1.

Let $G=D_n$ be the dihedral group of order $2n$
with presentation 
$$
D_n=\langle \rho, \sigma \;: \; \rho^n=\sigma^2=(\sigma\rho)^2=1 \rangle.
$$ 
 We will call {\it rotations} of the dihedral group to the elements of the subgroup  $\langle \rho\rangle$  and {\it symmetries} to the remaining elements. We recall that a  rotation and a  symmetry are never conjugate in $D_n$. Also recall that if $n$ is odd, all the symmetries are conjugate, while if $n$ is even there are two different conjugate  classes of symmetries.
 
 Let $S$ be a surface of genus $n\geq 3$. We consider the action of $D_n$ on $S$ given  in the Figure \ref{fig:Piramides}. That is, we imagine $S$ as the surface surrounding the edges of the projection of a regular $n$-pyramid onto its base. Then the action of $\rho$ is given by the restriction of the Euclidean rotation of order $n$  with axis the axis of the pyramid. And the action of $\sigma$ is given by a rotation of order 2 whose axis is a symmetry axis of the $n$-agonal base of the pyramid. 
 
 The quotient orbifold $\O=S/D_n$ has signature $(0;2,2,2,2,n)$. Let $P_1, \dots, P_4$ be the  cone points of order 2 and $P_5$ the cone point of order $n$, and let $x_i$ be a loop surrounding $P_i$ (see Figure \ref{fig:Piramides}). Then the fundamental group of this orbifold has presentation 
 $$
 \pi_1(\O)=\langle x_1,x_2,x_3,x_4,x_5 \;: \;  x_1^2=x_2^2=x_3^2=x_4^2=x_5^n=x_1x_2x_3x_4x_5=1 \rangle.
 $$

  An epimorphism which determines this action is $\Phi\colon \pi_1(\O)\to D_n$  defined as 
  $$\Phi(x_1)=\sigma, \quad \Phi(x_2)=\Phi(x_3)=\Phi(x_4)=\rho\sigma, \quad \Phi(x_5)=\rho. $$

\psfrag{P1}{$P_1$}
\psfrag{P2}{$P_2$}
\psfrag{P3}{$P_3$}
\psfrag{P4}{$P_4$}
\psfrag{P5}{$P_5$}

\psfrag{e}{$e$}
\psfrag{s}{$\sigma$}
\psfrag{r}{$\rho$}
\psfrag{r2}{$\rho^2$}
\psfrag{r3}{$\rho^3$}
\psfrag{rs}{$\rho\sigma$}
\psfrag{r2s}{$\rho^2\sigma$}
\psfrag{r3s}{$\rho^3\sigma$}

\psfrag{2pn}{$2\pi n$}
\psfrag{*}{$*$}
\psfrag{*g}{$*_{\gamma}$}
\psfrag{g}{$\gamma$}
\psfrag{*}{$*$}
 \psfrag{*g}{$*_{\gamma}$}
  \psfrag{bg}{$\beta_{\gamma}$}
  
\psfrag{O1}{$\mathcal{O}$}
 
 \psfrag{x1}{$x_1$}
  \psfrag{x2}{$x_2$}
   \psfrag{x3}{$x_3$}
    \psfrag{x4}{$x_4$}
     \psfrag{x5}{$x_5$}
 
\begin{figure} 
\center
\includegraphics[height=8cm,width=15cm]{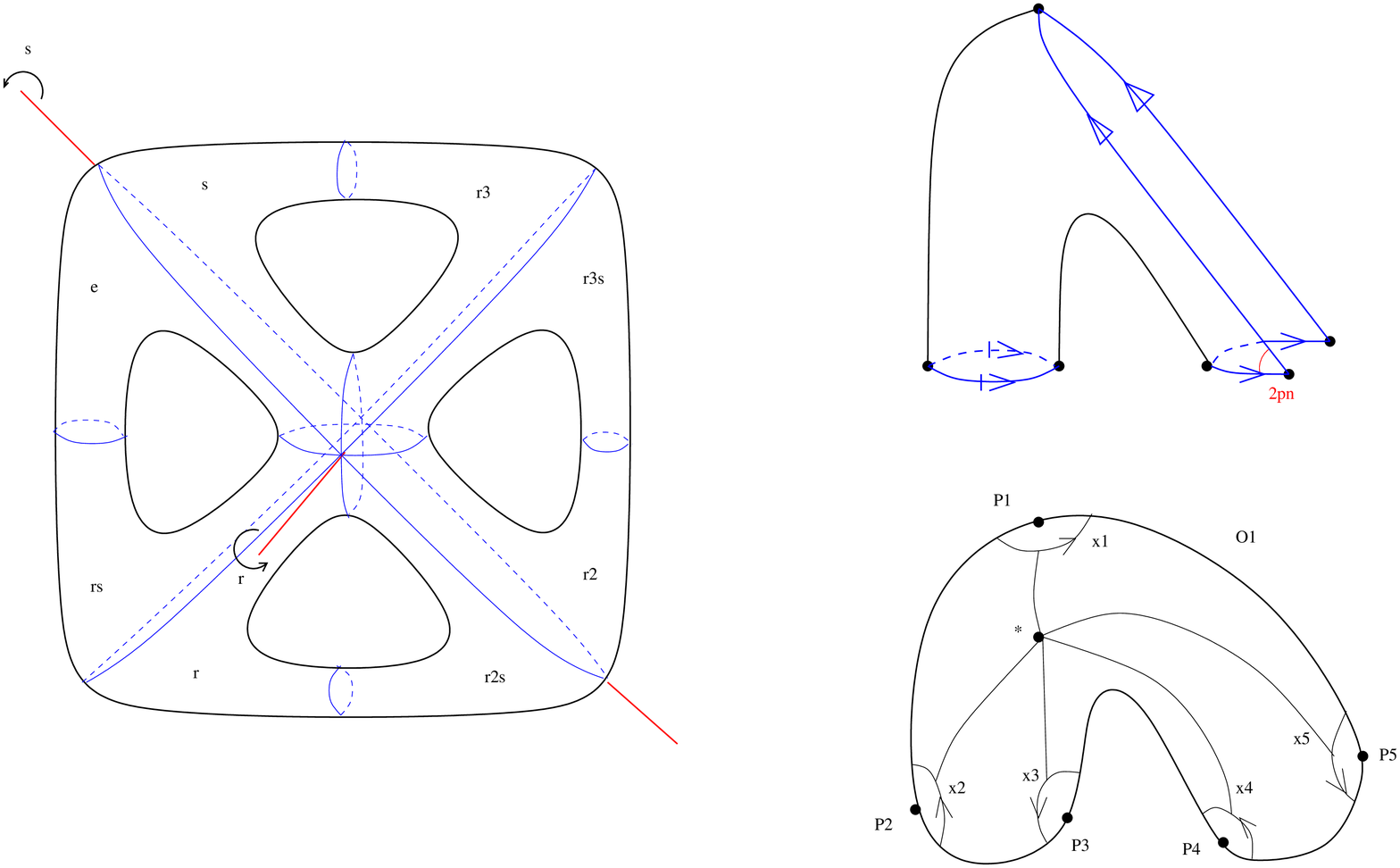}
\caption{Pyramids}
\label{fig:Piramides}
\end{figure}

This action will called the {\it pyramidal} topological action. 
In our notation, the  equisymmetric locus determined by this   action is
$\mathcal  M_n(D_n,\O,\Phi)$. In \cite{victor}, for any $n \geq  3$, a Fuchsian uniformization for the family $\mathcal P_n$ is given with another epimorphisms $\theta$. Nevertheless
a calculation proves that $\theta$ is in the same topological equivalence class of $\Phi$. Therefore the
2-dimensional family $\mathcal P_n$ coincides with our stratum $\mathcal  M_n(D_n,\O,\Phi)$. For brevity, in what follows, we will use the notation  $\mathcal P_n$.

\noindent{\bf Notation.} Extending the notation for the dihedral groups, we will use $D_1$ to denote the group generated by one reflection, which is    isomorphic to $\Z_2$, and $D_2$ for the group generated by reflections in two  orthogonal lines, which is isomorphic to $\Z_2\times \Z_2$.

\begin{rem}
 There is only one topological equivalence class of actions of $D_3$ on a hyperbolic
surface $S$ of genus $n= 3$ (see \cite{Broughton-Classifying}), and this action is topologically equivalent to the pyramidal action.
There are two different topological equivalence classes of actions of $D_4$ on a Riemann surface of genus $n= 4$ (see \cite{kimura}). It can be verified by a calculation that one of them is topologically
equivalent to the pyramidal action. 
\end{rem}

\subsection{Multicurves in $\O$ and first examples.}
We will consider any (admissible) multicurve $\S$ in $\O$ and, using Theorem \ref{thm:Combinatotial}, we will find the stable graph $\G_{\S}$ determined by $\S$ under $p$. 
In the orbifold $\O$, with signature $(0;2,2,2,2,n)$, it is easy to see that  there are only four types of admissible multicurves, up  to homeomorphism of $\O$ taking one to the other: (1) $\S=\{\g\}$ with $\g$ an arc; (2) $\S=\{\g_1,\g_2\}$ with both $\g_1,\g_2$  arcs; (3) $\S=\{\g\}$ with $\g$ a closed curve which separates three cone points of order 2 from the other cone points; and (4) $\S=\{\g_1,\g_2\}$ with  $\g_1$ an arc and $\g_2$ a closed curve as in (3). 

We start by illustrating in detail how Theorem \ref{thm:Combinatotial} is used in two concrete examples of multicurves of $\O$. 
  
  \subsection*{Example 1.}\label{sec:example1}
Consider the multicurve $\Sigma=\{\g\}$, where $\g$ is the arc joining the cone points $P_3,P_4$ shown  in the top-left figure of  Figure \ref{fig:Ejemplos1-2-3}. The complement $\O\menos \S$ consists  only of one suborbifold $\O_1$.  
We choose the basepoints $*_1=*$ and $*_{\g}\in \g$ and the path $\b_{1,\g}$  as  shown that  figure. 
  
 The suborbifold $\O_1$ is an (open) disc with three cone points of orders $2,2,n$.  Its fundamental group   is the subgroup of $\pi_1(\O,*)$ generated by $x_1,x_2,x_5$. Then, $\Im \Phi_1=\langle \sigma,\rho\rangle=G$,  and by Theorem \ref{thm:Combinatotial}(i), the number of vertices of $\G_{\S}$ is 1.

  On the other hand, recalling the notations of Section \ref{sec:Orbifolds} and of Section \ref{sec:InducedHomo}, we have that 
  the fundamental group of $\g$ (as a 1-dimensional orbifold) is generated by the loops $\g^a,\g^b$  and $i_*(\g^a),i_*(\g^b)$ are homotopic to $x_3,x_4$.  Thus, $ \Im\Phi_{\g}=\langle \rho\sigma\rangle $, and $|\Im\Phi_{\g}|=2$. Therefore, by 
   Theorem \ref{thm:Combinatotial}(ii), the number of edges of $\G_{\S}$ is $ n$. 
   
   Theorem \ref{thm:Combinatotial}(iii) shows that the degree of the only vertex $V_{1,e}$ is equal to $2n(2\frac{1}{2})=2n$ (of course since there is only one vertex, its degree must be twice the number of edges).
   
  Finally, by part (iv) of that theorem, and taking into account that $\chi(\O_1)=-1+\frac{1}{n}$, the weight  of the vertex is 
  $$1-\frac{1}{2}\left(2n (-1+\frac{1}{n})+ 2n\right)=0.
  $$
Thus, $\G_{\S}$ is the stable graph consisting on one vertex with weight $0$ and $n$ loops (see Figure \ref{fig:graphs} (1)).

\psfrag{*}{$*$}

\psfrag{*j}{$*_{j}$}

\psfrag{P1}{$P_1$}
\psfrag{P2}{$P_2$}
\psfrag{P3}{$P_3$}
\psfrag{P4}{$P_4$}
\psfrag{P5}{$P_5$}

\psfrag{e}{$e$}
\psfrag{s}{$\sigma$}
\psfrag{r}{$\rho$}
\psfrag{r2}{$\rho^2$}
\psfrag{r3}{$\rho^3$}
\psfrag{rs}{$\rho\sigma$}
\psfrag{r2s}{$\rho^2\sigma$}
\psfrag{r3s}{$\rho^3\sigma$}

\psfrag{2pn}{$2\pi n$}
\psfrag{*}{$*$}
\psfrag{*g}{$*_{\gamma}$}
\psfrag{g}{$\gamma$}
 
\psfrag{O1}{$\mathcal{O}_1$}
 
 \psfrag{x1}{$x_1$}
  \psfrag{x2}{$x_2$}
   \psfrag{x3}{$x_3$}
    \psfrag{x4}{$x_4$}
     \psfrag{x5}{$x_5$}
 \psfrag{bg}{$\beta_{\gamma}$}

\begin{figure}
\center
\includegraphics[height=10cm,width=17cm]{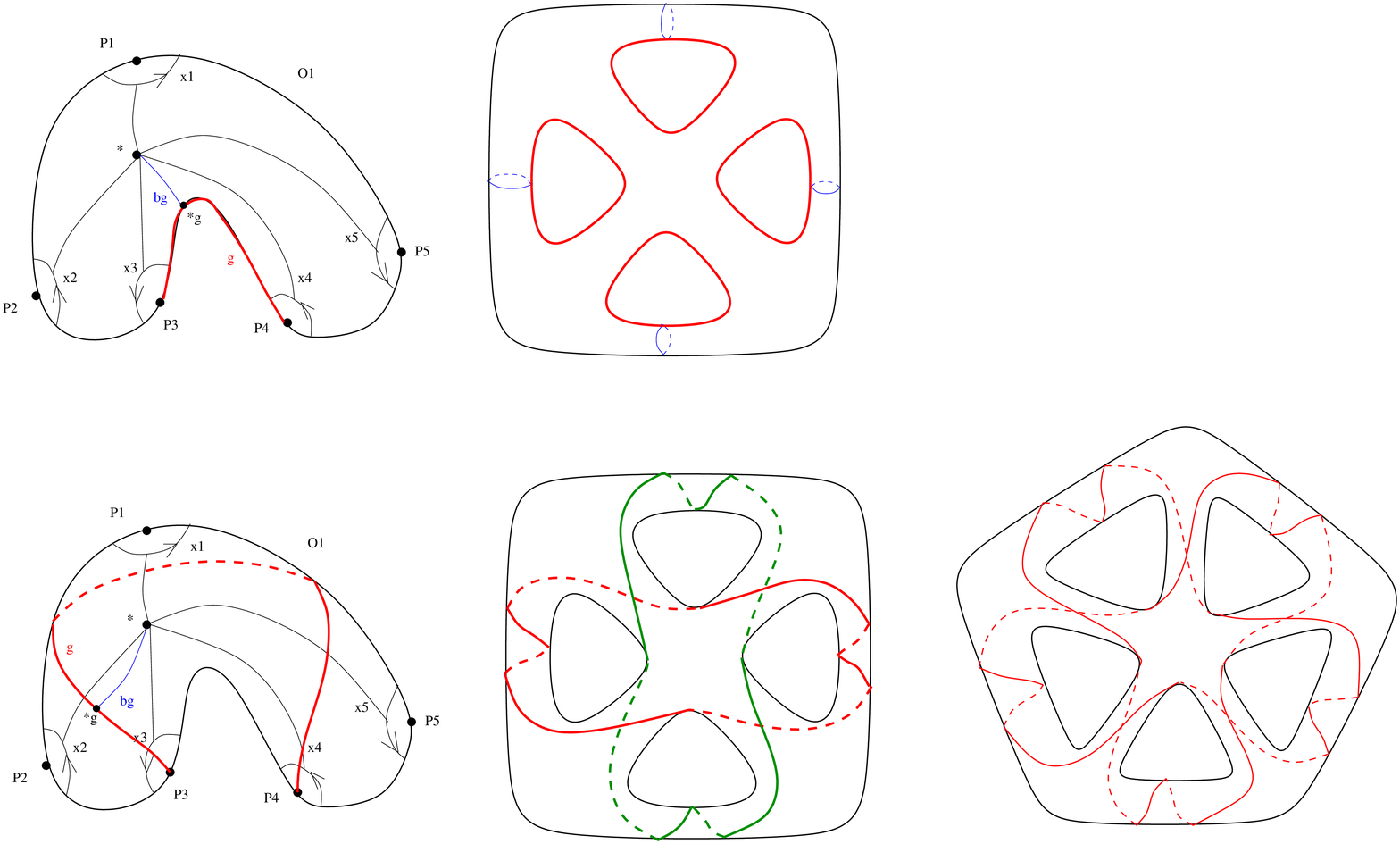}
 \caption{Examples 1 and 2}
 \label{fig:Ejemplos1-2-3}
 \end{figure}

  \subsection*{Example 2.}\label{sec:example2} 
  Consider the multicurve $\Sigma=\{\g\}$, where $\g$ is the arc joining the cone points $P_3,P_4$ as in the bottom-left figure  of  Figure \ref{fig:Ejemplos1-2-3}. As before, the complement $\O\menos \S$ consists  only of one suborbifold $\O_1$, which is a disc with three cone points of orders $2,2,n$. The 
  fundamental group of $\O_1$ is now  generated by the loops
 $ x_1, x_3^{-1}x_2x_3, x_4x_5x_4^{-1}  $. Then, $\Im\Phi_1$ is the subgroup of $G$ generated by $ \sigma, \rho\sigma, \rho\sigma\rho^2\sigma$, which is again the whole group $G$. 
 
 On the other hand, the generators $\g^a,\g^b$ of  $\pi_1(\g,*_{\g})$ are mapped by $i_*$ to  $x_3, x_1^{-1}x_4x_1$. Therefore 
 $$\Im\Phi_{\g}=\langle \rho\sigma, \sigma\rho\rangle =
 \langle \rho\sigma, \rho^2\rangle.
 $$
Notice that this group is again the whole group $G$ if $n$ is odd, but is a subgroup of index 2 of $G$ if $n$ is even. 

According to that, and applying Theorem \ref{thm:Combinatotial}, we have the following.
\begin{description}
\item[$n$ odd] the graph  $\G_{\S}$ has $\frac{2n}{2n} =1$ vertex, $\frac{2n}{2n} =1$ edge, and the vertex has degree $2n\frac{2}{2n}=2$ and weight $1-\frac{1}{2}\left(2n (-1+\frac{1}{n}) + 2\right) =n-1$.
\item[$n$ even]  the graph  $\G_{\S}$ has $\frac{2n}{2n} =1$ vertex, $\frac{2n}{n} =2$ edges, and the vertex has degree $2n\frac{2}{n}=4$ and weight $1-\frac{1}{2}\left(2n(-1+\frac{1}{n})+4\right) =n-2$.
\end{description}
In Figure \ref{fig:Ejemplos1-2-3}   we can see pictures of the preimage of $\S$ in example 1 and example 2 for $n=4$ and for $n=5$. By pinching these curves, we obtain  
 the stable surfaces corresponding to the stable graphs $\G_{\S}$.

\subsection{Multicurves $\S$ consisting in one arc.}
Theorem \ref{thm:Piramides1arco} below will show  all the stable graphs determined by $\S$ under $p$ when $\S$ is a multicurve consisting only of one arc $\g$.  
Examples 1 and 2 above are two cases of this theorem. Next lemma will be useful in that theorem. 

\begin{lem}\label{lem:ImPhi_arco}
Let   $\g$ be a simple arc in $\O$  joining two cone points of order 2
and let $\g^a,\g^b$ be the generators of $\pi_1(\g,*_{\g})$ (recall notation in Section \ref{sec:Orbifolds}). Then $\Phi_{\g}(\g^a\g^b)=\rho^k$ for some $k=1,\dots, n$ and $\Im\Phi_{\g}$ is a dihedral subgroup of $D_n$ isomorphic to $D_m$, with $m=\frac{n}{(n,k)}$, where $(n,k)$ denotes the greatest common divisor of $n,k$.
 
 Conversely, given any $m$ dividing $n$, there is an arc $\g$ so that $\Im \Phi_{\g}$ is isomorphic to $D_m$.
\end{lem}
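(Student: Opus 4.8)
The plan is to reduce everything to a short computation in $D_n$, using only the specific epimorphism $\Phi$. First I would note that, by the construction of $\Phi_{\g}=\Phi\circ i_*$, the elements $i_*(\g^a)$ and $i_*(\g^b)$ are conjugates in $\pi_1(\O\setminus{\rm Sing}\,\O)$ of small loops around the two order-$2$ endpoints of $\g$. Each such loop is conjugate to some $x_i$ with $i\in\{1,2,3,4\}$ (and $x_i^2=1$), and $\Phi(x_i)\in\{\sigma,\rho\sigma\}$; hence $\Phi_{\g}(\g^a)$ and $\Phi_{\g}(\g^b)$ are conjugates in $D_n$ of $\sigma$ or $\rho\sigma$, so they are reflections of $D_n$. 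Writing $\Phi_{\g}(\g^a)=\rho^s\sigma$ and $\Phi_{\g}(\g^b)=\rho^t\sigma$ and using the relation $\sigma\rho=\rho^{-1}\sigma$, we get
$$\Phi_{\g}(\g^a\g^b)=\rho^s\sigma\rho^t\sigma=\rho^{s-t},$$
so $\Phi_{\g}(\g^a\g^b)=\rho^k$ with $k\equiv s-t\pmod n$, normalized to $k\in\{1,\dots,n\}$.

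Next I would identify $\Im\Phi_{\g}=\langle\rho^s\sigma,\rho^t\sigma\rangle$. Since $(\rho^s\sigma)(\rho^t\sigma)=\rho^k$, this group equals $\langle\rho^k,\rho^t\sigma\rangle$. The rotation subgroup $\langle\rho^k\rangle\subset\langle\rho\rangle$ has order $m:=n/(n,k)$; conjugation by the reflection $\rho^t\sigma$ sends $\rho^k$ to $\rho^{-k}$, so $\langle\rho^k\rangle$ is normalized by $\rho^t\sigma$ and $\langle\rho^k,\rho^t\sigma\rangle=\langle\rho^k\rangle\cup\rho^t\sigma\langle\rho^k\rangle$ has order $2m$. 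Thus $\Im\Phi_{\g}$ is the dihedral group generated by a rotation subgroup of order $m$ and a reflection, that is $\Im\Phi_{\g}\cong D_m$ (with the conventions $D_1\cong\Z_2$, $D_2\cong\Z_2\times\Z_2$ fixed above). This proves the first assertion.

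For the converse, given $m\mid n$ I would set $k=n/m$, so that $n/(n,k)=m$, and produce an arc $\g$ with $\Phi_{\g}(\g^a\g^b)=\rho^k$; by the first part such an arc then satisfies $\Im\Phi_{\g}\cong D_m$. The key flexibility is this: if one modifies $\g$, keeping its endpoints and staying in the complement of the other cone points, by letting it wind once more around the order-$n$ cone point $P_5$, then one of $i_*(\g^a),i_*(\g^b)$ gets conjugated by a loop around $P_5$, hence its $\Phi$-image gets conjugated by a conjugate of $\rho=\Phi(x_5)$, which shifts the exponent $s$ or $t$ by $\pm 2$. So, varying the winding from a fixed starting arc, the attainable $k$ run over the residue class of the initial $k$ modulo $2$ in $\Z/n\Z$. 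Concretely: from an arc joining $P_3$ and $P_4$ (where the small loops around the two endpoints have the same $\Phi$-image $\rho\sigma$) one obtains every even $k$; when $n$ is even, from an arc joining $P_1$ and $P_2$, whose images $\sigma$ and $\rho\sigma$ lie in the two distinct conjugacy classes of reflections of $D_n$, one obtains every odd $k$; and when $n$ is odd a single arc of the first type already realizes every $k$, since $2$ is invertible modulo $n$. In all cases $k=n/m$ is attained, so $D_m$ occurs.

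I expect the converse to be the only genuine obstacle: one has to make precise that ``winding $\g$ around $P_5$'' conjugates the relevant loop by exactly a loop around $P_5$ (and hence its $\Phi$-image by a conjugate of $\rho$), and then do the parity bookkeeping to be sure that every residue class --- in particular the one containing $k=n/m$ --- is actually realized. The forward direction is the elementary $D_n$ computation above and should present no difficulty.
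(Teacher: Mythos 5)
Your proof is correct. The forward direction is exactly the paper's argument: both of you observe that $\Phi_{\g}(\g^a)$ and $\Phi_{\g}(\g^b)$ are conjugates of $\sigma$ or $\rho\sigma$, hence reflections, so their product is a rotation $\rho^k$ and the group they generate is $\langle \rho^k,\rho^t\sigma\rangle\cong D_{n/(n,k)}$. For the converse the strategy is the same (explicit arcs, with a winding parameter used to sweep through the possible values of $k$), but the construction differs: the paper winds the arc around a pair of order-$2$ cone points $P_1,P_4$, so each wrap conjugates one endpoint loop by $x_1x_4$, whose image $\sigma\cdot\rho\sigma=\rho^{-1}$ again shifts the reflection exponent by $2$; the parity gap is then closed by letting the arc end at $P_4$ (both endpoint images odd reflections, $k$ even) versus at $P_1$ (images $\sigma$ and $\rho\sigma$, $k$ odd), plus one degenerate picture for $D_1$. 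You wind around the order-$n$ point $P_5$ instead and close the parity gap with the choice of endpoints $P_3,P_4$ versus $P_1,P_2$; the algebraic effect per wrap (conjugation of a reflection by $\rho^{\pm1}$, shifting the exponent by $\pm2$) is identical. The step you flag --- that inserting a wind around $P_5$ on the segment from $*_{\g}$ to one endpoint conjugates exactly that one generator's image by a conjugate of $\rho$, while leaving the other and $\b_{\g}$ untouched --- is true and is the analogue of the explicit word computations $\Phi\bigl((x_1x_4)^k x_1x_3x_1^{-1}(x_1x_4)^{-k}\bigr)$ that the paper carries out with figures; your version trades those figures for the parity bookkeeping, which you have done correctly.
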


\begin{proof} 
 Assuming $\g$ joins the cone points $P_{i_1},P_{i_2}$, $i_1,i_2=1,\dots, 4$,  then the loops $\b_{\g}\g^a\b_{\g}^{-1}$, $ \b_{\g}\g^b\b_{\g}^{-1}$ are conjugate to $x_{i_1}$ and $x_{i_2}$. Since $\Phi(x_{i_1}) , \Phi(x_{i_2})\in \{\sigma, \rho \sigma\}$,  we have that $\Phi_{\g}(\g^a)$, $ \Phi_{\g}(\g^b)$ are both conjugate to $\sigma$ or $\rho\sigma$; thus, they are symmetries of $D_n$ and their product is a rotation, say $\rho^k$, which has order $m=\frac{n}{(n,k)}$.   Hence, 
$$
\Im\Phi_{\g}=\langle\Phi_{\g}(\g^a), \Phi_{\g}(\g^b)\rangle =\langle \Phi_{\g}(\g^a), \Phi_{\g}(\g^a\g^b ) \rangle \cong D_m. 
$$ 

For the converse, we consider the following 
  two families of examples of $\S=\{\g\}$, where  $\g$ is shown on the bottom of  Figure \ref{fig:Piramides1arco}. On the top of that figure, we show another view of the orbifold $\O_1$. 
\begin{description}
\item[Family  on the bottom left] The arc $\g$ starts at $P_3$,   wraps $k$ times  around $P_1,P_4$ and ends at $P_4$, with $k\geq 0$. The fundamental group of the orbifold $\g$ is generated by the two loops $\g^a,\g^b$. Then
\begin{eqnarray*}
\Phi_{\g}(\g^a)&=&\Phi(\b_{\g}\g^a\b_{\g}^{-1})= \Phi(x_4)
=\rho\sigma
\\
\Phi_{\g}(\g^b)&=&\Phi(\b_{\g}\g^b\b_{\g}^{-1})=
\Phi\left(( x_1 x_4)^k x_1 x_3 x_1^{-1} ( x_1 x_4 )^{-k}\right)
\\ &=&
(\sigma\rho\sigma)^k\sigma \rho\sigma\sigma(\sigma\rho\sigma)^{-k}=\sigma\rho^{k+1}\sigma\rho^{-k}\sigma = \sigma\rho^{2k+1}.
\end{eqnarray*}

\psfrag{*}{$*$}
 \psfrag{*g}{$*_{\gamma}$}
  \psfrag{bg}{$\beta_{\gamma}$}
  
\psfrag{O1}{$\mathcal{O}$}

 \begin{figure}
\center 
\includegraphics[height=8cm,width=10cm]{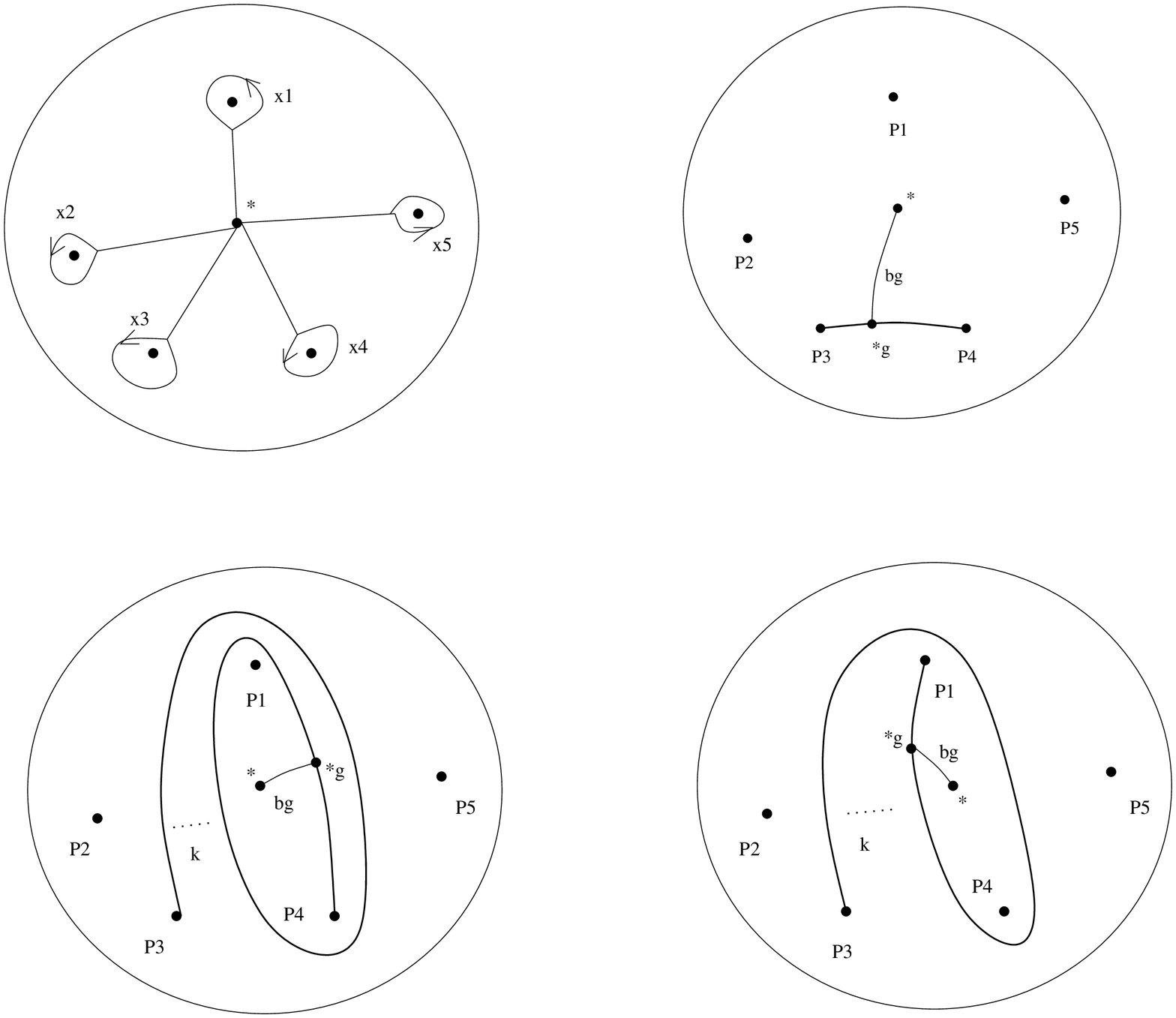}
\caption{}
\label{fig:Piramides1arco}
 \end{figure}

In this way, we have
 $$
 \Im\Phi_{\g} =\langle \rho\sigma, \sigma\rho^{2k+1}\rangle =\langle \rho\sigma, \rho^{2(k+1)}\rangle.
 $$

\item[Case in the top right] Notice that the previous family does not include the case on this figure, so we consider it apart. In this case, we have
\begin{eqnarray*}
\Phi_{\g}(\g^a)&=&\Phi(\b_{\g}\g^a\b_{\g}^{-1})= \Phi(x_3)=\rho\sigma
\\
\Phi_{\g}(\g^b) &=&\Phi(\b_{\g}\g^b\b_{\g}^{-1})=
\Phi(x_4^{-1}) = \rho\sigma,
\end{eqnarray*}
and thus 
 $$
 \Im\Phi_{\g} =\langle \rho\sigma, \rho\sigma \rangle =\{ \rho\sigma,1 \}\cong D_1  .
 $$
 
\item[Family on the bottom right] The arc $\g$ starts at $P_3$,   wraps $k\geq 0$ times  around $P_1,P_4$ and ends at $P_1$. As before, $\pi_1(\g,*_{\g})=\langle\g^a,\g^b\rangle$, and in this case we have 
 \begin{eqnarray*}
\Phi_{\g}(\g^a)&=&\Phi(\b_{\g}\g^a\b_{\g}^{-1})= \Phi(x_1)= \sigma
\\
\Phi_{\g}(\g^b)&=&\Phi(\b_{\g}\g^b\b_{\g}^{-1})=
\Phi\left((x_4 x_1  )^k x_3 (x_4 x_1  )^{-k}\right)
\\ &=&
(\rho\sigma\sigma)^k\rho\sigma  (\rho\sigma\sigma)^{-k}= \rho^{k }\rho\sigma \rho^{-k}  =\rho^{2k+1} \sigma .
\end{eqnarray*}
In this way, we have
 $$
 \Im\Phi_{\g} =\langle  \sigma, \rho^{2k+1}\sigma\rangle =\langle  \sigma, \rho^{2k+1}\rangle.
 $$
\end{description}

The three cases above show that for any $d=1,\dots, n$, there is $\g$ so that $\Im\Phi_{\g} $  is isomorphic to the dihedral group $D_{\frac{n}{(n,d)}}$. Now, given $m>0$ dividing $n$, just take $d=\frac{n}{m}$ and we obtain the result. 
\end{proof}

\begin{teo}\label{thm:Piramides1arco}
The multicurve $\S$ consists of exactly one arc if and only if  there is number   $m\geq 1$ dividing   $n$ such that the  stable graph $\G_{\S}$ has one vertex and $n/m$ edges (thus, the degree of the vertex is $2n/m$ and its weight is $n-n/m$, see Figure \ref{fig:graphs}(1)). 
\end{teo}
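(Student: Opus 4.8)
The plan is to combine the classification of admissible multicurves of $\O$ into the four types listed above with Theorem~\ref{thm:Combinatotial} and Lemma~\ref{lem:ImPhi_arco}; the central quantity is the number of vertices of $\G_{\S}$, which by Theorem~\ref{thm:Combinatotial}(i) equals $\sum_{j=1}^{r}|G|/|\Im\Phi_j|$, where $\O_1,\dots,\O_r$ are the components of $\O\menos\S$. I will show that this number is $1$ precisely for multicurves of type (1), and then simply read off the rest of the data from Theorem~\ref{thm:Combinatotial}; this gives both implications of the statement.

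First I would rule out the multicurves containing a closed curve (types (3) and (4)). A simple closed curve separates $|\O|\cong S^2$, so if $\S$ contains one then $r\ge 2$; since every summand $|G|/|\Im\Phi_j|$ is at least $1$, the graph $\G_{\S}$ then has at least two vertices. Next I would handle the multicurve with two arcs (type (2)). Cutting $S^2$ along a disjoint union of arcs does not disconnect it, so $r=1$; moreover two disjoint admissible arcs use all four order-$2$ cone points, so the single component $\O_1$ is an annulus whose only cone point is $P_5$, and its orbifold fundamental group is generated by the core curve $c$ together with a loop $x_5$ around $P_5$, subject to $x_5^n=1$. Now $c$ is freely homotopic in $\O$ to a loop encircling exactly the two endpoints of one of the arcs, so $\Phi(c)$ is conjugate to a product $\Phi(x_{i})\Phi(x_{i'})$ of two of the symmetries $\sigma,\rho\sigma$, hence a rotation, while $\Phi(x_5)$ is conjugate to $\rho$. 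Thus $\Im\Phi_1\subseteq\langle\rho\rangle$, so $|\Im\Phi_1|\le n$ and $\G_{\S}$ has at least $|G|/n=2$ vertices. Consequently a single vertex forces $\S$ to be of type (1).

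It then remains to describe $\G_{\S}$ when $\S=\{\g\}$ is one arc. Here $\O\menos\S=\O_1$ is a disc carrying $P_5$ and the two order-$2$ cone points not on $\g$, so $\Im\Phi_1$ contains a conjugate of $\rho$ (hence all of $\langle\rho\rangle$) together with a conjugate of one of $\sigma,\rho\sigma$; therefore $\Im\Phi_1=D_n$ and $\G_{\S}$ has exactly one vertex. By Lemma~\ref{lem:ImPhi_arco}, $\Im\Phi_{\g}\cong D_m$ for some $m\mid n$, so $|\Im\Phi_{\g}|=2m$ and Theorem~\ref{thm:Combinatotial}(ii) gives $|G|/|\Im\Phi_{\g}|=n/m$ edges. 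Finally $\chi(\O_1)=-1+\tfrac1n$, the arc lies in $\S^1\cap\S_1$, and Theorem~\ref{thm:Combinatotial}(iii)--(iv) give degree $|\Im\Phi_1|\cdot 2\cdot\tfrac1{2m}=2n/m$ and weight $1-\tfrac12\bigl(2n(-1+\tfrac1n)+\tfrac{2n}{m}\bigr)=n-\tfrac nm$, as claimed in the statement and in Figure~\ref{fig:graphs}(1). Conversely, if $\G_{\S}$ has a single vertex then by the previous paragraph $\S$ cannot be of type (2), (3) or (4), so it is of type (1).

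The step I expect to be the main obstacle is the type (2) case: identifying the core curve of the annular complement $\O_1$ with a loop around the two endpoints of one arc, so that its $\Phi$-image is a rotation, and being careful that the orbifold fundamental group of an annulus with a single cone point is generated by that core together with the cone loop, so that $\Im\Phi_1$ is generated by the images of only these two elements. Everything else is a direct application of Theorem~\ref{thm:Combinatotial} and Lemma~\ref{lem:ImPhi_arco}, together with the elementary facts that a product of two reflections in $D_n$ is a rotation and that the conjugates of $\rho$ generate $\langle\rho\rangle$.
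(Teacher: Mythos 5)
Your proof is correct and follows essentially the same route as the paper's: compute $\Im\Phi_1=D_n$ for the one-arc complement to get a single vertex, invoke Lemma~\ref{lem:ImPhi_arco} for the edge count, and for the converse observe that one vertex forces $r=1$ and then rule out the two-arc case. The only difference is that you rule out two arcs by an inline computation ($\Im\Phi_1\subseteq\langle\rho\rangle$, hence at least two vertices), whereas the paper defers exactly that computation to Theorem~\ref{thm:Piramides2arcos}; your version is self-contained but substantively identical.
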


\psfrag{1}{$1$}
\psfrag{0}{$0$}
\psfrag{n/m}{$\frac{n}{m}$}
\psfrag{n-n/m}{$n-\frac{n}{m}$}
\psfrag{d}{$\vdots$}
\psfrag{w}{$w$}
\psfrag{(1)}{(1) $\S=$ one arc}
\psfrag{(2)}{(2) $\S=$ two arcs} 
\psfrag{(3)}{(3) $\S=$ one closed curve}
\psfrag{(4)}{(4) $\S=$ one arc, one closed curve}
 
 \begin{figure}
\center 
\includegraphics[height=9cm,width=14cm]{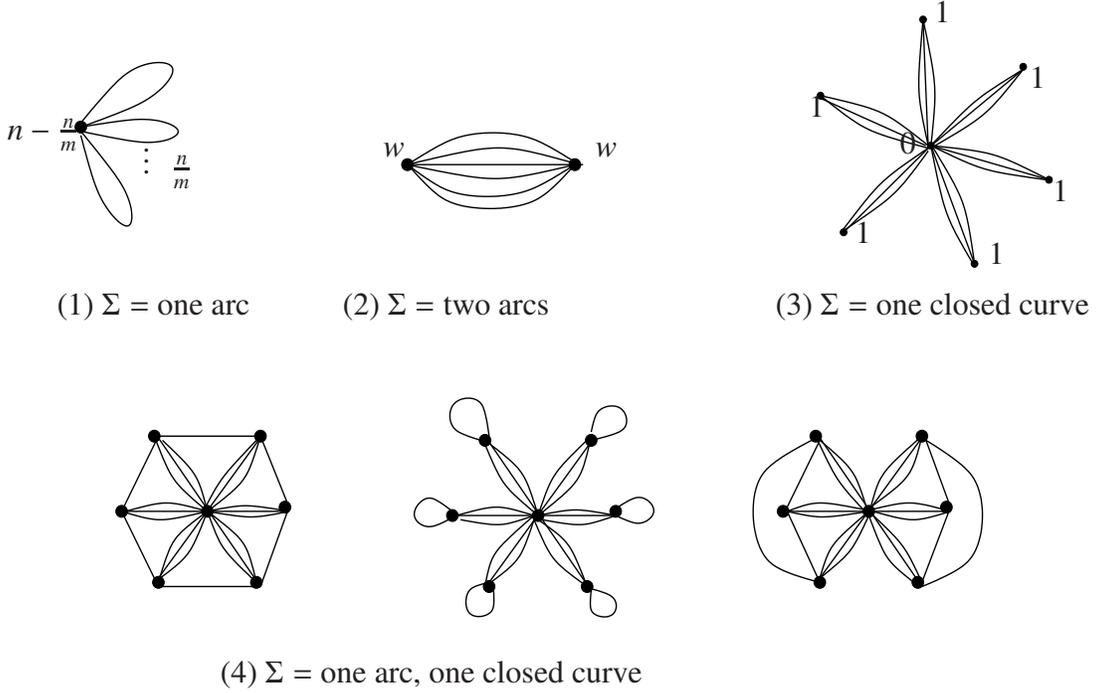}
\caption{Types of graphs appearing in Theorems \ref{thm:Piramides1arco}, \ref{thm:Piramides2arcos}, \ref{thm:Piramides1curva}, \ref{thm:Piramides1arc1curve}. No number assigned to a vertex means weight equal zero.}
\label{fig:graphs}
 \end{figure}

\begin{proof} Let $\S=\{\g\}$ where $\g$ is an arc. 
As in   examples 1, 2 above, the complement of $\S$ is just one orbifold $\O_1$, which is a disc with three cone points of orders 2, 2, and $n$. The fundamental group of $\O_1$ is generated by a a conjugate of $x_5$ and conjugates of $x_i,x_j$, where $x_i,x_j$ are the loops surrounding the two cone points of order 2 in $\O_1$. Thus, $\Im\Phi_1$ is generated by $y_1,y_2,y_3$, where $y_1$  a conjugate of $\Phi(x_5)$ and $y_2,y_3$ are  conjugates of $\Phi(x_i),\Phi(x_j)$. 
Now, $\Phi(x_5)=\rho$, which has order $n$, and thus $y_1$ has also  order $n$. On the other hand, $\Phi(x_i),\Phi(x_j)$ are equal to one of  $\sigma, \rho\sigma $, which are symmetries of the dihedral group $D_n$. Therefore, $y_2,y_3$ are also symmetries of $D_n$. In conclusion, $\Im\Phi_1$ is generated by an element of order $n$ and a symmetry, and thus, it is the whole group $D_n$. Applying Theorem \ref{thm:Combinatotial}(i), we have that $\G_{\S}$ has   one vertex.  

On the other hand, by Lemma \ref{lem:ImPhi_arco}, $\Im\Phi_{\g}$ is isomorphic to some $D_m$. Then, by Theorem \ref{thm:Combinatotial}(ii), we have that the number of edges is equal to $\frac{2n}{2m}$, which proves the desired result.

\medskip

Conversely, suppose  $\G_{\S}$ is a graph as stated. Since it has only one vertex, then $\O\menos\S$ consists just of one suborbifold. Then, it is either $\S=\{\g\}$ or $\S=\{\g_1,\g_2\}$.  Theorem \ref{thm:Piramides2arcos} below shows that the second possibility is impossible since it  produces a graph with  two vertices. Now, given $m>0$ dividing $n$, by Lemma \ref{lem:ImPhi_arco}, there is $\g$ so that $\Im\Phi_{\g}$ is isomorphic to $D_m$. 
Taking $\S=\{\g\}$, and using Theorem \ref{thm:Combinatotial},    the graph $\G_{\S}$ has $\frac{2n}{2d}$ edges, and the result follows. 
\end{proof}

\subsection{Multicurves $\S$ consisting in two arcs.}

 \begin{teo}\label{thm:Piramides2arcos}
(a) Suppose $\S=\{\g_1,\g_2\}$ where  $\g_1,\g_2$ are both arcs. Then $\G_{\S}$ is the stable graph consisting on two vertices of the same weight and $E$ edges joining one vertex  to the other, where $E=(n,k)+(n,k+1)$ for some $k=1,\dots, n$ (see Figure \ref{fig:graphs}(2)). 

(b) Moreover,  if $\G$ is a stable graph of genus $n$  with two vertices of equal weight and $(n,k)+(n,k+1)$ edges joining the two vertices (for some $k=1,\dots, n$), then there is a multicurve $\S=\{\g_1,\g_2\}$ where the $\g_i$ are arcs, so that $\G=\G_{\Sigma}$.
 \end{teo}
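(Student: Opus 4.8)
The plan is to apply Theorem~\ref{thm:Combinatotial} and Lemma~\ref{lem:ImPhi_arco} to a two-arc multicurve, in the spirit of Theorem~\ref{thm:Piramides1arco}. For part (a) I would first analyze $\O\menos\S$. Since the arcs of an admissible multicurve join cone points of order $2$ and are pairwise disjoint, $\g_1$ and $\g_2$ match the four order-$2$ cone points $P_1,P_2,P_3,P_4$ into two pairs; moreover two disjoint arcs do not separate the sphere $|\O|$, so $\O\menos\S$ is a single suborbifold $\O_1$ --- a twice-slit sphere whose only cone point is $P_5$, i.e.\ an annulus with one interior cone point of order $n$, with $\pi_1(\O_1)$ generated by a loop $\delta$ around $P_5$ and the two loops $b_1,b_2$ bounding regular neighbourhoods of $\g_1,\g_2$, subject to $b_1b_2\delta=1$. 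Now $\Phi(\delta)$ is a conjugate of $\Phi(x_5)=\rho$, hence a power of $\rho$, and each $\Phi(b_i)$ is conjugate to $\Phi(x_a)\Phi(x_b)$ for the endpoints $P_a,P_b$ of $\g_i$, a product of two symmetries and hence again a power of $\rho$. Therefore $\Im\Phi_1=\langle\rho\rangle$, of order $n$ and index $2$ in $D_n$, so by Theorem~\ref{thm:Combinatotial}(i) the graph $\G_\S$ has $V=2n/n=2$ vertices; by parts (iii)--(iv) their degrees and weights depend only on the suborbifold $\O_1$, so the two vertices have a common weight $w$, which by the genus formula $2w+E-1=n$ equals $(n+1-E)/2$.

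For the edges, both arcs lie in $\S^1$ (since $r=1$), so by Theorem~\ref{thm:Combinatotial}(v), Case~2, each edge $E_{\g,g}$ joins $V_{1,g}$ to $V_{1,g\tau}$ with $\tau$ a conjugate of $\Phi_\g(\g^a)$ or $\Phi_\g(\g^b)$ --- a symmetry of $D_n$, hence not in $\langle\rho\rangle=\Im\Phi_1$; so $g$ and $g\tau$ lie in distinct cosets and every edge joins the two different vertices, with no loops. By Theorem~\ref{thm:Combinatotial}(ii) and Lemma~\ref{lem:ImPhi_arco}, $E=2n/|\Im\Phi_{\g_1}|+2n/|\Im\Phi_{\g_2}|=(n,k_1)+(n,k_2)$ where $\Phi_{\g_i}(\g_i^a\g_i^b)=\rho^{k_i}$. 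Finally, the relation $b_1b_2\delta=1$, read in $\pi_1(\O\menos{\rm Sing}\,\O)$, becomes an identity among conjugates of the pushoffs of $\g_1,\g_2$ and of $x_5$; applying $\Phi$ and using that $\rho^m$ is conjugate only to $\rho^{\pm m}$ gives $\pm k_1\pm k_2\equiv\pm 1\pmod n$. Replacing each $k_i$ by $\pm k_i$ (which leaves $(n,k_i)$ unchanged) we may assume $k_1+k_2\equiv-1\pmod n$, whence $(n,k_1)+(n,k_2)=(n,k_1)+(n,k_1+1)$, i.e.\ the claimed form with $k=k_1$. One also checks the resulting graph really is a stable graph of genus $n$: $(n,k)$ and $(n,k+1)$ are coprime divisors of $n$, so their product divides $n$, giving $E\le n+1$ with the parity that makes $w=(n+1-E)/2$ a nonnegative integer.

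For part (b): a stable graph of the stated kind is determined by $n$ and $E=(n,k)+(n,k+1)$, so by part (a) it suffices to exhibit, for each $k$, disjoint admissible arcs $\g_1,\g_2$ partitioning $\{P_1,\dots,P_4\}$ with $\{(n,k_1),(n,k_2)\}=\{(n,k),(n,k+1)\}$ --- then part (a) gives the required graph, the relation $k_1+k_2\equiv\pm1$ forcing the divisor pair to be consecutive. Exactly one of the arcs contains $P_1$ and is therefore ``mixed'' (its two sides map to conjugates of $\sigma$ and of $\rho\sigma$), giving an odd $k_i$, while the other joins two of $P_2,P_3,P_4$ and gives an even $k_i$; since exactly one of $k,k+1$ is odd, assign the odd value to the mixed arc and the even one to the other. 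These images are then realized by winding the arcs, as in the three families of Lemma~\ref{lem:ImPhi_arco}, drawn so as to stay disjoint (one arc inside a small disc missing the windings of the other, or the two windings nested around $P_5$); a direct computation in $D_n$ with the presentation of $\pi_1(\O)$ yields $k_1\equiv k$, $k_2\equiv-1-k$, hence $E=(n,k)+(n,k+1)$. I expect the main obstacle to be precisely this last step of part (b) --- arranging the two winding arcs to be simultaneously disjoint while controlling both of their $\Phi$-images; in part (a) the only subtle points are the topological description of $\O\menos\S$ and the derivation of the relation $\pm k_1\pm k_2\equiv\pm1\pmod n$ from the way the arc-neighbourhoods co-bound a neighbourhood of $P_5$.
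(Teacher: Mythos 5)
Your proposal is correct and follows essentially the same route as the paper: identify $\O\menos\S$ as an annulus with one cone point of order $n$, deduce $\Im\Phi_1=\langle\rho\rangle$ (two vertices), apply Lemma \ref{lem:ImPhi_arco} and Theorem \ref{thm:Combinatotial}(ii),(v) for the edge count and the fact that every edge joins the two distinct cosets, and derive the "consecutive exponents" relation from the boundary relation of the annulus (your $k_1+k_2\equiv-1$ versus the paper's $k_2=k_1\pm1$ is just an orientation convention and is immaterial since only the gcd's matter). Part (b) is realized in both cases by explicit winding arcs, and your sketch is at the same level of detail as the paper's figure-based construction.
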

 \begin{proof}
(a) Assume $\g_1$ joins the cone points $P_{i_1},P_{i_2}$ and  $\g_2$ joins the cone points $P_{i_3},P_{i_4}$, where  $(i_1,i_2,i_3, i_4)$ is a permutation of $(1,2,3,4)$.   

The complement $\O\menos \S$ is an orbifold $\O_1$ which is an annulus with a cone point of order $n$. 
The fundamental group $\pi_1(\O_1,*)$ is generated by $z_1 x_5 z_1^{-1} $ and $ z_2 x_{i_1}z_2^{-1} z_3x_{i_2}z_3^{-1}$ for some $z_1,z_2,z_3\in\pi_1(\O,*)$. Then, $\Im\Phi_1$ is generated by a conjugate of $\Phi(x_5)$, which is an element of order $n$, and by  $\Phi( z_2 x_{i_1}z_2^{-1})\Phi( z_3x_{i_2}z_3^{-1} )$, the product of two symmetries, that is a rotation. Thus, $\Im\Phi_1=\langle\rho\rangle$,   $| \Im\Phi_1|=n$ and  the number of vertices of $\G_{\S}$ is $2n/n=2$. 

On the other hand, by Lemma \ref{lem:ImPhi_arco},  $\Im\Phi_{\g_1},\Im\Phi_{\g_2}$ are both dihedral subgroups of $D_n$, say 
$$
\Im\Phi_{\g_1}= \langle \Phi_{\g_1}(\g_1^a), \Phi_{\g_1}(\g_1^a\g_1^b)\rangle \cong D_{m_1},\quad 
\Im\Phi_{\g_2}= \langle \Phi_{\g_2}(\g_2^a), \Phi_{\g_2}(\g_2^a\g_2^b)\rangle \cong D_{m_2},
$$ 
for some $m_1,m_2$ dividing $n$.
Now, calling $\g'_i=\b_{\g_i}\g_i^a\g_i^b\b_{\g_i}^{-1}, i=1,2$, and   looking at the fundamental group  of $\O_1$, we have the equality   $\g_2'= \g_1'(z_1 x_5z_1^{-1})$,    up to the orientation of $\g_1', \g_2'$. Thus,
$$
\Phi(\g_2')= \Phi(\g_1')\Phi(z_1 x_5z_1^{-1})=\Phi(\g_1')\,\rho^{\pm1}
$$
where we have used that $\Phi(x_5)=\rho$ and any conjugate of $\rho$ is equal to $\rho^{\pm 1}$. Now, we know that   $\Phi(\g_1'), \Phi(\g_2')$ are rotations of $D_n$ (since they are product of two symmetries), say $\Phi(\g_1')=\rho^k$, for $k$ any integer modulo $n$. Thus, $\Phi(\g_2')=\rho^{k\pm 1}$. Since $\Im\Phi_{\g_i}\cong D_{m_i}$, we have that the order of $\g_i'$ is $m_i$, i.e., $\frac{n}{(n,k)}=m_1$ and  $\frac{n}{(n,k\pm 1)}=m_2$. 
Then, by Theorem \ref{thm:Combinatotial}(ii), we have that the number of edges of $\G_{\S}$ is equal to $(n,k)+(n,k\pm 1)$ and the result follows.

The two vertices have the same weight and the same degree because both vertices cover the same suborbifold $\O_1$.

It is left to show that each edge joins the two vertices, for which, we will use Theorem \ref{thm:Combinatotial}(v). By part (i) of that theorem and since $\Im\Phi_1=\langle\rho\rangle$,  the two vertices $V_{1,e},V_{1,\sigma}$ of $\G_{\S}$  are in correspondence with the cosets $\langle \rho\rangle, \sigma\langle\rho\rangle$. Let $E_{\g_i,g}$ be an  edge of $\G_{\S}$, which is in correspondence with the coset $g\Im\Phi_{\g_i}$. Since $\Phi_{\g_i}$ is a dihedral subgroup, it contains rotations and symmetries of $D_n$, and so does any coset $g\Im\Phi_{\g_i}$. So we can assume that $g$ is a rotation. Now, 
with the notations of Section \ref{sec:Multicurve}, we 
know that $\b_{\g_i}(\b_{1,\g_i}^b)^{-1}\b_1^{-1}$ is freely homotopic to either $\g_i^a$ or $\g_i^b$, and thus
  $\Phi(\b_{\g_i}(\b_{1,\g_i}^b)^{-1}\b_1^{-1})$ is a symmetry of $D_n$.
 Thus, by the theorem, the edge  $E_{\g_i,g}$ joins the vertices $V_{1,e}$ and $V_{1,\sigma}$

\psfrag{*g1}{$*_{\g_1}$}
\psfrag{*g2}{$*_{\g_2}$}
\psfrag{g1}{$\g_1$}
\psfrag{t}{$t$}
\psfrag{g2}{$\g_2$}
\psfrag{1=rho}{$\Phi(\g_1)=\rho$}
\psfrag{2=rho2}{$\Phi(\g_2)=\rho^2$} 
\psfrag{1=1}{$\Phi(\g_1)=1$}
\psfrag{2=rho}{$\Phi(\g_2)=\rho$}
\psfrag{t=0}{$t=0$}
\psfrag{1,rho}{$\Phi(\g_1)=1, \Phi(\g_2)=\rho  $}

 \begin{figure}
\center 
\includegraphics[height=8cm,width=14cm]{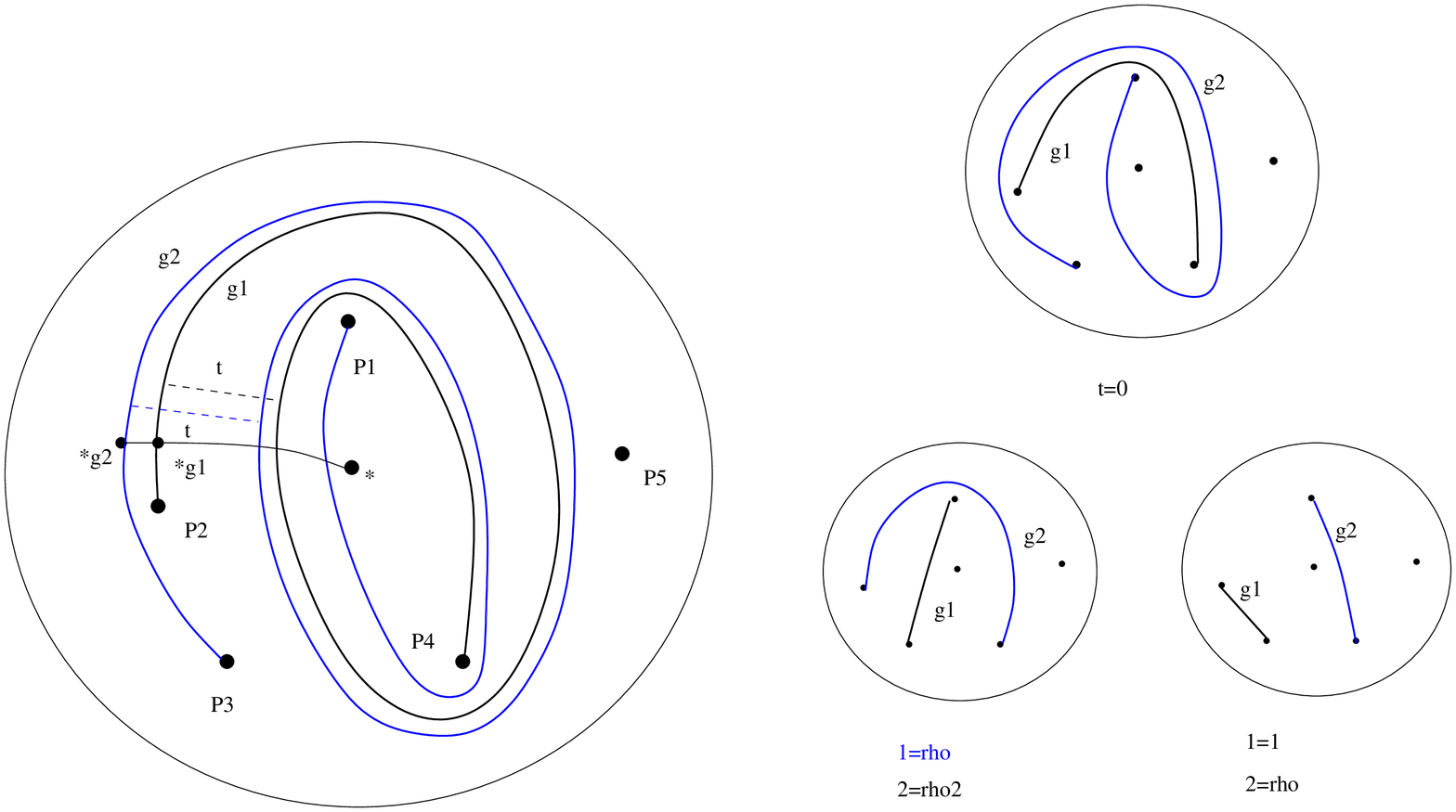}
\caption{}
\label{fig:Piramides2arcos}
 \end{figure}

\medskip
(b) We    take    $\g_1, \g_2$ as in  Figure \ref{fig:Piramides2arcos}, where both arcs wrap $t\geq 0$ times around the points $P_1,P_4$. Then we can compute that  $\Phi(\g_1')=\rho^{2t +2}$ and  $\Phi(\g_2')=\rho^{2t +3}$  so that $2t+2,2t+3= k,k+1$ for all $k\geq 2$. Since $(n,k)=(n,k+an), a>0$, then the above is enough to prove the remaining cases $k=1,2$. Nevertheless, Figure \ref{fig:Piramides2arcos} shows  also alternative arcs to obtain these cases. 
 \end{proof}
 
 \noindent{\bf Examples.}
 \begin{enumerate}
 \item For each $n$, there is always a multicurve $\S$ so that $\G_{\S}$ has two vertices with weight 0 and $n+1$ edges joining both of them. (Just take $k=n$ in the above theorem.)
 \item If $n=6$, there are multicurves $\S$ such that $\G_{\S}$ has two vertices with the same weight and either 3,5 or 7 edges.
 \end{enumerate}

  \subsection{Multicurves $\S$ consisting in one closed curve.}
  \begin{teo}\label{thm:Piramides1curva}
  \begin{itemize}
  \item[(a)] Suppose $\S=\{\g\}$ with $\g$ a simple closed curve. 
 Then, there is a number $m\geq 1$ dividing $n$ so that the graph $\G_{\S}$ has $\frac{n}{m}+1$ vertices, one with weight 0 and degree $n$ and the others with weight 1 and degree $m$, and $n$ edges, each of them  joins the vertex of weight 0 with a different vertex (see Figure \ref{fig:graphs}(3)). 
 \item[(b)] Conversely, for any $m\geq 1$ dividing $n$, there is multicurve $\S=\{\g\}$ with $\g$ a closed curve, such that its associated graph $\G_{\S}$ is the graph described above. 
  \end{itemize}

  \end{teo}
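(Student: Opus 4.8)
The plan is to follow the pattern of the two worked examples above and of Theorem~\ref{thm:Piramides1arco}: first determine how a closed admissible curve $\g$ cuts $\O$, then compute the images $\Im\Phi_j$ and $\Im\Phi_{\g}$, and finally read the stable graph off Theorem~\ref{thm:Combinatotial}. For the classification of $\g$, note that $|\O|$ is a sphere, so $\g$ separates the five cone points into two groups, and conditions (ii)--(iv) in the definition of admissible multicurve forbid a group that is empty, a single cone point, or two cone points of order $2$; a short case check then shows the only possibility is that $\g$ separates three of the order-$2$ cone points, say $P_j,P_k,P_l$, from the remaining order-$2$ cone point $P_i$ together with the order-$n$ cone point $P_5$. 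Thus $\O\setminus\S=\O_1\sqcup\O_2$, where $\O_1$ is a disc with three cone points of order $2$ (hence $\chi(\O_1)=-\tfrac12$) and $\O_2$ is a disc with one cone point of order $2$ and one of order $n$ (hence $\chi(\O_2)=\tfrac1n-\tfrac12$), both of negative Euler characteristic as required.

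Next I would compute the three relevant images exactly as in Examples 1 and 2. The group $\Im\Phi_2$ is generated by a conjugate of $\Phi(x_5)=\rho$, a conjugate of $\Phi(x_i)\in\{\sigma,\rho\sigma\}$, and the image of the boundary $\g$; since the conjugacy class of $\rho$ in $D_n$ is $\{\rho,\rho^{-1}\}$ and conjugates of $\sigma,\rho\sigma$ are symmetries, $\Im\Phi_2$ contains $\langle\rho\rangle$ and a symmetry, so $\Im\Phi_2=D_n$ and there is a single vertex over $\O_2$. The boundary $\g$, read in $\pi_1(\O_2)$, is a product of a conjugate of $x_5$ and a conjugate of $x_i$, so $\Phi_{\g}(\g)$ is $\rho^{\pm1}$ times a symmetry, again a symmetry; hence $|\Im\Phi_{\g}|=2$ and, by Theorem~\ref{thm:Combinatotial}(ii), $\G_{\S}$ has $2n/2=n$ edges. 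Finally $\Im\Phi_1$ is generated by conjugates of $\Phi(x_j),\Phi(x_k),\Phi(x_l)$ (and the boundary), each of which is a symmetry of $D_n$; a subgroup of $D_n$ generated by symmetries is dihedral, so $\Im\Phi_1\cong D_m$ for some $m\mid n$ (allowing $D_1$ in the paper's notation), giving $|G|/|\Im\Phi_1|=n/m$ vertices over $\O_1$. Substituting into Theorem~\ref{thm:Combinatotial}(iii)--(iv): the vertex over $\O_2$ has degree $2n/|\Im\Phi_{\g}|=n$ and weight $1-\tfrac12\bigl(2n(\tfrac1n-\tfrac12)+n\bigr)=0$, while each vertex over $\O_1$ has degree $2m/|\Im\Phi_{\g}|=m$ and weight $1-\tfrac12\bigl(2m(-\tfrac12)+m\bigr)=1$. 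Since $\g\in\S^2$, Theorem~\ref{thm:Combinatotial}(v), Case~1, forces every edge to join the unique vertex over $\O_2$ to a vertex over $\O_1$, and together with the degree count this yields exactly the graph of Figure~\ref{fig:graphs}(3) (each weight-$1$ vertex joined to the weight-$0$ vertex by $m$ edges); the genus check $\sum g_i+e-v+1=(n/m)+n-(n/m+1)+1=n$ confirms consistency.

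For part (b), given a divisor $m$ of $n$ I would exhibit an explicit simple closed curve $\g$ with $\Im\Phi_1\cong D_m$, in the spirit of the families used in the proof of Lemma~\ref{lem:ImPhi_arco}: take $\g$ to bound a disc containing $P_5$ and one of the order-$2$ cone points, letting it wind a controlled number $k$ of times around a chosen subset of the cone points, so that the conjugates of the relevant $x_i$ generating $\pi_1(\O_1)$ have $\Phi$-images of the form $\rho\sigma$ and $\rho^{2k+1}\sigma$; then $\Im\Phi_1=\langle\rho\sigma,\rho^{2k}\rangle\cong D_{n/(n,2k)}$, and a second family (and, if parity forces it, a third, e.g.\ obtained by pairing $P_5$ with a different cone point) realizes the divisors not obtained this way, exactly as the three families in Lemma~\ref{lem:ImPhi_arco} jointly cover all of $D_1,\dots,D_n$. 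I expect this converse — producing an honest \emph{simple} closed curve for every divisor and carrying out the winding computations, with the usual care about the two conjugacy classes of symmetries when $n$ is even — to be the main obstacle, while part (a) is a fairly direct application of Theorem~\ref{thm:Combinatotial} once the shape of $\g$ is settled.
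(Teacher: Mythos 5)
Your proposal is correct and follows essentially the same route as the paper: identify the two-disc decomposition, compute $\Im\Phi$ of the two pieces and of $\g$ (getting $D_n$, $D_m$, and a group of order $2$ respectively), and read the graph off Theorem \ref{thm:Combinatotial}, with the converse obtained by winding a curve around $P_5$ and a chosen order-$2$ point. For part (b) the paper carries out exactly the two-family construction you sketch (obtaining $\langle\rho\sigma,\rho^{2t}\rangle$ and $\langle\rho\sigma,\rho^{2t+1}\rangle$, with $t=\tfrac{n}{2m}$ or $t=\tfrac12(\tfrac{n}{m}-1)$ according to the parity of $n/m$), so the step you flag as the main obstacle is resolved just as you anticipate.
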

  \begin{proof} (a)  The curve $\g$ separates $\O$ into two discs, i.e., $\O\menos \g=\O_1\cup \O_2$, where   $\O_1$   contains  the cone point of order $n$ and a cone point of order 2 and $\O_2$ contains the remaining  three cone points of order 2. We choose basepoints and paths as explained in Section \ref{sec:ChoicePaths}, i.e., choose $*=*_1\in \O_1$, $*_2\in \O_2$ and $*_{\g}\in \g$,  we take  $\b_1$ to be the constant path, we choose $\b_{1,\g}$ and $\b_{2,\g}$, and we  take $\b_{\g}=\b_{1,\g}$ and $\b_2=\b_{1,\g}\b_{2,\g}^{-1}$.

  Since $\O_1$ contains the cone point of order $n$,  $\Im\Phi_1$  is generated by an element of order $n$ and by a symmetry of $D_n$. Thus, $\Im\Phi_1=D_n$ and $|\Im\Phi_1|=2n$. 
On the other hand,  $\Im\Phi_2$ is generated by three symmetries of $D_n$, and therefore is isomorphic to a dihedral group, say $D_m$ for some $m$ dividing $n$, so that $|\Im\Phi_2|=2m$. 
Then, by Theorem\ref{thm:Combinatotial}(i), $\G_{\S}$ has  $\frac{2n}{2n}+\frac{2n}{2m} =1+\frac{n}{m}$ vertices. 

To compute the number of edges, we see that $\Im\Phi_{\g}$ is generated by the product of an element of order $n$ and a symmetry, and this product is always a symmetry. Thus $|\Im\Phi_{\g}|=2$ and, by Theorem \ref{thm:Combinatotial}(ii), $\G_{\S}$ has $\frac{2n}{2}=n$ edges. 

Using  parts (iii) and (iv) of that theorem, we can easily compute the degree and weight of the vertices. Indeed, the vertex $V_{1,e}$ has degree $D_1= 2n\frac{1}{2}=n$ and weight $w^1=1- \frac{1}{2}(2n(\frac{1}{2}-\frac{n-1}{n})+n)= 0$. Notice that all the other vertices $V_{2,g},g\in D_n$ have the same degree and weight, $D_2=2m\frac{1}{2}=m$ and $w^2=1- \frac{1}{2}(2m(-\frac{1}{2})+m)=1$.

Finally, we use Theorem \ref{thm:Combinatotial}(v) to see how the  edges connect vertices. 
Notice that both  loops $\b_{\g}\b_{1,\g}^{-1}\b_1^{-1}$ and $\b_{\g}\b_{2,\g}^{-1}\b_2^{-1}$ are homotopically trivial, thus, 
the edge $E_{\g,g\Im \Phi_{\g}}$ joins the vertex $V_{1,g\Im\Phi_1} $ to the vertex $V_{2,g\Im\Phi_2}$. Since $\Im\Phi_1=D_n$, all the vertices $V_{1,g\Im\Phi_1}$ are equal to $V_{1,e}$, and since 
$\Im\Phi_{\g}$ is conjugate to a subgroup of   $\Im\Phi_2$, then there are $\frac{|\Im\Phi_2|}{|\Im\Phi_{\g}|}=\frac{2m}{2}$ edges coming into any vertex $V_2,g$.  This proves the result.

   
 \psfrag{t}{$t$} 
  \psfrag{t=0}{$t=0$}
   \psfrag{t=1}{$t=1$}
 \begin{figure}
\center 
\includegraphics[height=10cm,width=14cm]{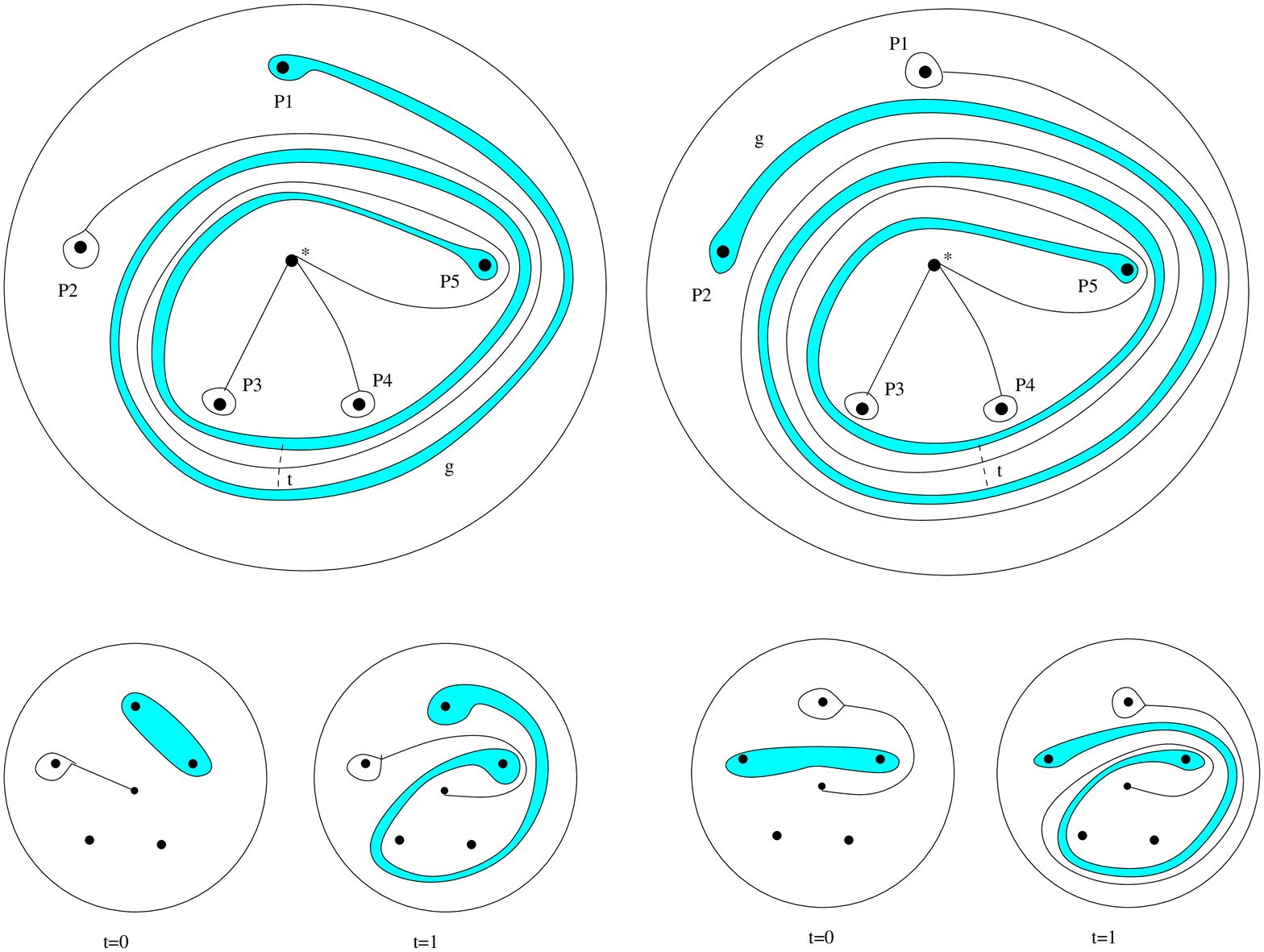}
\caption{}
\label{fig:Piramides1curva}
 \end{figure}
   
(b)   Given $m\geq 1$ dividing $n$, we need to find a closed curve $\g$ so that, with the notations of the first part of the proof, $\Im\Phi_2$ be isomorphic to the dihedral group $D_m$. Take the curve $\g$ as shown in Figure \ref{fig:Piramides1curva}. We think of  $\g$ as bounding  a disc neighborhood of an arc that joins  $P_5$ to another cone point and  wrapping $t\geq 0$ times around another two cone points. 

On the left  part of the figure,  $\Im\Phi_2$ is generated by the three words $\rho\sigma, \rho\sigma$ and  $ \rho^t(\rho\sigma)\rho^{-t} = \rho^{2t+1}\sigma$, i.e., 
$$
\Im\Phi_2=\langle \rho\sigma , \rho^{2t }\rangle.
$$
On the right part of the figure, $\Im\Phi_2$ is generated by the three words $\rho\sigma, \rho\sigma$ and  $ \rho^{t+1}( \sigma)\rho^{-t-1} = \rho^{2t+2 }\sigma$, i.e., 
$$
\Im\Phi_2=\langle \rho\sigma , \rho^{2t+1}\rangle.
$$
The cases $t=0,t=1$ are shown on the bottom of the figure. In conclusion, by taking different values of $t$ we obtain all possible dihedral subgroups of $D_n$, so   the result is proved.
Explicitly, suppose $\frac{n}{m}$ is even. Since the order of $\rho^{2t }$ is $\frac{n}{(n,2t)}$, then this order is equal to $m$ if and only if $(n,2t)=\frac{n}{m}$, and we can take  $t=\frac{n}{2m}$ and use the figure on the left. 
Similarly, if $\frac{n}{m}$ is odd, then we can take 
   $t=\frac{1}{2}\left(\frac{n}{m}-1\right)$   and use the figure on the right.
\end{proof}


  \psfrag{m=1}{$m=1$}
   \psfrag{m=2}{$m=2$}
   \psfrag{m=3}{$m=3$}
   \psfrag{m=6}{$m=6$}
   \psfrag{0}{$0$}
   \psfrag{1}{$1$}
 
 \subsection{Multicurves $\S$ consisting in an arc and a closed curve.}
 \begin{teo}\label{thm:Piramides1arc1curve} 
\begin{itemize}
\item[(a)] Let $\S=\{\g_1,\g_2\}$, where $\g_1$ is  an arc  and $\g_2$ is a  closed curve. Then there exists $m\geq 1$ dividing $n$ so that the graph $\G_{\S}$ has:
 \begin{itemize}
 \item[i)] $\frac{n}{m}$ vertices $V_0 ,V_1,\dots, V_{n/m}$ all of them with weight 0, $V_0$   with degree $n$ and the others with degree $m+2$;
 \item[ii)] $n+\frac{n}{m}$ edges;
 \item[iii)] for each vertex $V_i, i>0$  there are $m$ edges joining it to $V_0$;
 \item[iv)] the remaining edges join the vertices $V_i, i>0$ in cycles of length $d$ for some $d$ dividing  $  n/m$  (see Figure \ref{fig:graphs}(4)).
\end{itemize}  
\item[(b)] For any $m $ dividing $n$ and:
\begin{itemize}
\item[i)]  $d=\frac{n}{m}$, or
\item[ii)] $d=1$,  or
\item[iii)] $n/m$ is even and  $d=2$
\end{itemize}
  there is a multicurve $\S$ such that $\G_{\S}$ is the graph described in part (a).  
\end{itemize}

 \end{teo}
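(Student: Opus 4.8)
The plan is to argue exactly as in the proofs of Theorems \ref{thm:Piramides2arcos} and \ref{thm:Piramides1curva}: first describe how a multicurve $\S=\{\g_1,\g_2\}$ of the stated type cuts $\O$, then extract all the data of $\G_\S$ from Theorem \ref{thm:Combinatotial}. For the decomposition: as in Theorem \ref{thm:Piramides1curva}, the only admissible closed curve separates $\O$ into a disc $\O_1$ carrying the order-$n$ cone point $P_5$ and one cone point of order $2$, and a disc carrying the remaining three cone points of order $2$. Since the arc $\g_1$ joins two cone points of order $2$ it cannot lie in $\O_1$, so it lies in the other disc and joins two of its three order-$2$ points; cutting that disc along $\g_1$ leaves a connected suborbifold $\O_2$ that is an annulus with one cone point of order $2$, one boundary component coming from $\g_2$ and the other being the boundary of a disc neighbourhood of $\g_1$. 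Thus $\O\setminus\S=\O_1\sqcup\O_2$ with $\chi(\O_1)=-\tfrac12+\tfrac1n$ and $\chi(\O_2)=-\tfrac12$, and $\g_2\in\S^2$, $\g_1\in\S^1$.

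Next, as in those two theorems, I would compute the image groups. We have $\Im\Phi_1=D_n$ because $\O_1$ carries $P_5$; $\Im\Phi_2$ is generated by a rotation and a symmetry of $D_n$, hence is a dihedral group $D_m$ for some $m\mid n$, the order of the rotation being that of $\Phi(\g_1^a\g_1^b)$; $\Im\Phi_{\g_1}\cong D_m$ with the same $m$ by Lemma \ref{lem:ImPhi_arco}; and $\Im\Phi_{\g_2}\cong\Z_2$, since $\Phi(\g_2)$ is the product of the order-$n$ rotation and a symmetry and so is itself a symmetry. Theorem \ref{thm:Combinatotial}(i),(ii) then yield $\tfrac nm+1$ vertices, namely the unique vertex $V_0$ over $\O_1$ and the vertices $V_1,\dots,V_{n/m}$ over $\O_2$, and $n+\tfrac nm$ edges, of which $n$ lie over $\g_2$ and $\tfrac nm$ over $\g_1$; parts (iii),(iv) give that $V_0$ has degree $n$ and weight $0$, while each $V_i$ has degree $2m\bigl(\tfrac12+\tfrac2{2m}\bigr)=m+2$ and weight $0$. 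Since $\g_2\in\S^2$ with $\g_2\subset\bar\O_1\cap\bar\O_2$, Theorem \ref{thm:Combinatotial}(v) Case~1 shows each $\g_2$-edge joins $V_0$ to one of the $V_i$, and since $\Im\Phi_{\g_2}$ is conjugate into $\Im\Phi_2$ there are $|\Im\Phi_2|/|\Im\Phi_{\g_2}|=m$ of them at each $V_i$, exactly as in Theorem \ref{thm:Piramides1curva}. This establishes (i), (ii) and (iii).

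For (iv): since $\g_1\in\S^1$, Theorem \ref{thm:Combinatotial}(v) Case~2 shows every $\g_1$-edge is incident only to vertices over $\O_2$, and as $\deg V_i=m+2$ with $m$ of those ends used by $\g_2$-edges, exactly two ends of $\g_1$-edges meet each $V_i$; hence the subgraph of $\G_\S$ on $\{V_1,\dots,V_{n/m}\}$ formed by the $\g_1$-edges is $2$-regular, i.e.\ a disjoint union of cycles. The group $D_n$ acts on $\G_\S$, permuting the $\g_1$-edges and acting transitively on $\{V_1,\dots,V_{n/m}\}$ (the $n/m$ components of $p^{-1}(\O_2)$), so it carries cycles to cycles; thus all the cycles have one common length $d$, and $d\mid\tfrac nm$ since they partition the $n/m$ vertices. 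For part (b) I would exhibit, for each $m\mid n$, three explicit multicurves --- with $\g_1$ an arc wrapping $t$ times around two cone points, as in the families in the proof of Lemma \ref{lem:ImPhi_arco}, together with a suitable closed curve $\g_2$ --- and check by Theorem \ref{thm:Combinatotial} that a suitable choice of $t$ forces $\Im\Phi_{\g_1}=\Im\Phi_2$ (all $\g_1$-edges then being loops, $d=1$), another forces a single $(n/m)$-cycle ($d=n/m$), and, when $n/m$ is even, an intermediate choice gives $d=2$.

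The counting of vertices, edges, degrees and weights and the $\g_2$-edge pattern are routine, repeating almost verbatim the arguments of Theorems \ref{thm:Piramides2arcos} and \ref{thm:Piramides1curva}. The genuinely delicate point is (iv): the clean way to the cycle structure and to $d\mid n/m$ is the $D_n$-equivariance argument above, rather than directly analysing the assignment $g\,\Im\Phi_2\mapsto g\,\Phi\bigl(\b_{\g_1}(\b^b_{j,\g_1})^{-1}\b_j^{-1}\bigr)\,\Im\Phi_2$ coming from Theorem \ref{thm:Combinatotial}(v), which is awkward precisely because that element need not normalise $\Im\Phi_2$. In part (b) one also has to verify that these explicit families realise exactly the three listed values of $d$.
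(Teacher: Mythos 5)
Your proposal is correct, and for most of the theorem it follows the paper's own route: the same decomposition of $\O\setminus\S$ into a disc containing the cone point of order $n$ and an annulus with one cone point of order $2$, the same computation of $\Im\Phi_1,\Im\Phi_2,\Im\Phi_{\g_1},\Im\Phi_{\g_2}$ (with the two image groups attached to the arc and to the annulus both isomorphic to $D_m$ and sharing the rotation subgroup $C_m$, though not necessarily equal), and the same application of Theorem \ref{thm:Combinatotial}(i)--(iv) for the counts, degrees and weights, and of the Theorem \ref{thm:Piramides1curva} argument for the $\g_2$-edges. Where you genuinely diverge is item (iv). The paper sets $S=\Phi(\b_{\g_1}(\b_{1,\g_1}^b)^{-1}\b_1^{-1})$, $R=Ss$ with $s$ the image of the loop around the cone point of the annulus, observes that $E_{\g_1,g}$ joins $V_{1,g}$ to $V_{1,gS}=V_{1,gR}$, and traces the cycles $V_{1,g},V_{1,gR},\dots$ explicitly, identifying $d$ as the order of $R\cdot C_m$ in $C_n/C_m$ (the normalisation worry you raise does not actually arise, because one only multiplies on the right by $R$ modulo $\Im\Phi_1$, which contains $s$). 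You instead note that the $\g_1$-edges form a $2$-regular subgraph on the vertices over the annulus and that $D_n$ acts on $\G_{\S}$ transitively on those vertices, so the components are cycles of one common length dividing $n/m$. Your argument is clean and valid, and it buys the structural statement (iv) with less bookkeeping; what it does not buy is the explicit formula for $d$, which is precisely what the paper uses to carry out part (b) quickly (choosing $t$ so that $R\cdot C_m$ has order $n/m$, $1$, or $2$ in $C_n/C_m$). So in executing your part (b) you would in effect have to rederive the paper's coset computation, or trace the cycles by hand in each family; your sketch of part (b) is otherwise in line with the paper's explicit families of arcs and closed curves.
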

  \begin{proof}   
 (a)  In this situation, we have $\O\menos\S=\O_1\cup\O_2$, where $\O_1$ is an annulus with a cone point of order 2, and $\O_2$ is a disc with a cone point of order 2 and a cone point of order $n$. Then we have: 
\begin{itemize}
\item  $\Im\Phi_2$ is the whole group, by the same argument as in Theorem \ref{thm:Piramides1curva}. Thus,  $|\Im\Phi_2|=2n$.
\item   Also as in Theorem \ref{thm:Piramides1curva}, $\Im\Phi_{\g_2}$ is generated by a symmetry, so that $|\Im\Phi_{\g_2}|=2$.
\item   $\Im\Phi_{\g_1}=\langle \Phi_{\g_1}(\g_1^a), \Phi_{\g_1}(\g_1^a\g_1^b) \rangle$ is, by Lemma \ref{lem:ImPhi_arco}, isomorphic to $D_m$ for some $m\geq 1$ dividing $n$. Thus, $|\Im\Phi_{\g_1}|=2m$.  Notice that $\b_{\g_1}(\b_{1,\g_1}^b)^{-1}\b_1^{-1}$ is homotopic to either $\b_{\g_1}\g_1^a\b_{\g_1}^{-1}$ or $\b_{\g_1}\g_1^b\b_{\g_1}^{-1}$, so that 
 $\Phi(\b_{\g_1}(\b_{1,\g_1}^b)^{-1}\b_1^{-1})\in \Im\Phi_{\g_1}$   is a symmetry, that we denote  by $S$.
\item    Finally, $\pi_1(\O_1,*_1)$ is generated by the curve $\g'_1=\b_{1,\g_1}\g_1^a\g_1^b(\b_{1,\g_1})^{-1}$, which  surrounds the arc $\g_1$,
 and by  a loop $z$ surrounding the cone point of $\O_1$.   Notice that $\Phi_1(\g_1')=\Phi_{\g_1}(\g_1^a\g_1^b)$ and that  $\Phi_1(z)=\Phi(z)$ is a symmetry, that we denote by $s$.
Then,  $\Im\Phi_{ 1}=\langle  \Phi_{\g_1}(\g_1^a\g_1^b), s\rangle$ 
  is also isomorphic to $D_m$. Notice that $\Im\Phi_{\g_1}$ and $\Im\Phi_1$ are not necessarily equal, but share the subgroup $C_m$  of index 2 containing all their  rotations.  
 
\end{itemize} 
 
 \noindent  For later purposes, we define $R=Ss$, and let $d$ be the minimum number such that $R^d\in \Im\Phi_1$. If we call $C_n$ to the subgroup of rotations of $D_n$ and $C_m$ to the subgroup of rotations of $\Im\Phi_1$, we have that $d$ is the order of the coset $R\cdot C_m$ in $C_n/C_m$. In particular, $d$ divides $n/m$.
 
 \medskip
 
 With the above information and using Theorem \ref{thm:Combinatotial}, we have that $\G_{\S}$ has $1+\frac{n}{m}$ vertices, one of them, $V_{2,e}$, projecting over $\O_2$, and the remaining $V_{1,g}$ projecting over $\O_1$. The graph $\G_{\S}$ has   $n+\frac{n}{m}$ edges, $n$ of them $E_{\g_2,g}$ projecting over $\g_2$ and the remaining $E_{\g_1,g}$ projecting over $\g_1$. The degree of the vertex $V_{2,e}$ is equal to $D_2= 2n\frac{1}{2}=n$, and its weight is $w^2=1-\frac{1}{2}(2n(\frac{1}{2}-\frac{n-1}{n})+n)=0$. The degree of the vertices $V_{1,g}$ is equal to $D_1=2m(\frac{1}{2}+2\frac{1}{2m})=m+2$, and their weight is $w^1=1-\frac{1}{2}(2m(-\frac{1}{2})+m+2)=0$.

  \psfrag{D}{$\Delta$}
  \psfrag{*}{$*$}
    \psfrag{*2}{$*_2$}
    \psfrag{*g1}{$*_{\g_1}$}
    \psfrag{*g2}{$*_{\g_2}$}
  \psfrag{O1}{$\O_1$}
   \psfrag{O2}{$\O_2$}
   \psfrag{2}{$2$}
    \psfrag{n}{$n$}
   \psfrag{b2g1}{$\b_{2,\g_1}$}
   \psfrag{b2g2}{$\b_{2,\g_2}$}
   \psfrag{g1}{$ \g_1$}
   \psfrag{g2}{$ \g_2$}
   \psfrag{t}{$ t$}

 
 It is left to show how the edges connect the vertices, for which we will use Theorem \ref{thm:Combinatotial}(v). We choose the basepoints and paths as explained in Section \ref{sec:ChoicePaths}.
 Using the same argument as in the proof of Theorem \ref{thm:Piramides1curva}, we obtain that for each vertex $V_{1,g}$ there are $m$ edges projecting over $\g_2$ joining it with    $V_{2,e}$. 
 
 On the other hand, since  $\g_1$    is in the closure of just  $\O_1$, the   edges $E_{\g_1,g}$ join vertices $V_{1,g}$ among them. 
   The edge $E_{\g_1,e}$ joins the vertices $V_{1,e}$ and $V_{1,S}$, and this last vertex is equal to $V_{1,R}$ because   $S\Im\Phi_1=Ss\Im\Phi_1$. Then we have that: the edge $E_{\g_1, R}$ joins the vertices $V_{1,R}$ and $V_{1,R^2}$, the edge $E_{\g_1, R^2}$ joins the vertices $V_{1,R^2}$ and $V_{1,R^3}$, etc. Clearly, $V_{1,R^d}=V_{1,e}$. We claim that all the  vertices $V_{1,R^k}$, for $k=1, \dots, d$ are different. Indeed, let $k,k'=1,\dots, d$. If $V_{1,R^k}=V_{1,R^{k'}}$ then $R^{k-k'}\in \Im\Phi_1$, which contradicts the definition of $d$. 
   This shows that there is a cycle of  $d$  vertices $V_{1,R^k}$ joined by edges.  Moreover, each vertex $V_{1,R^k}$  is joined to $V_{2,e}$  with $m$ edges and it is also joined to $V_{1,R^{k-1}}$, $V_{1,R^{k+1}}$. Since we already know that its degree is $m+2$, then this vertex is not joined to any other vertex. 
 \hfill  \break
    Now, suppose $V_{1, g}, g\in D_n$ is a vertex different from all the previous ones. Then the edge $E_{\g_1, gR^k}$ joins the vertices $V_{1,gR^k}$ and $V_{1,gR^{k+1}}$ for all $k=1,\dots, d$. 
Clearly, these  vertices are all different, because $V_{1,R^k}$ are. They are also different from the vertices $V_{1,R^k}$, otherwise some vertex would have degree greater than $m+2$. 
\hfill \break
In conclusion,  there are $(n/m)/d$ cycles  of length $d$ of  vertices projecting over $\O_1$. So the result is proved.  
 
\medskip
 \begin{figure}
\center 
\includegraphics[height=18cm,width=14cm]{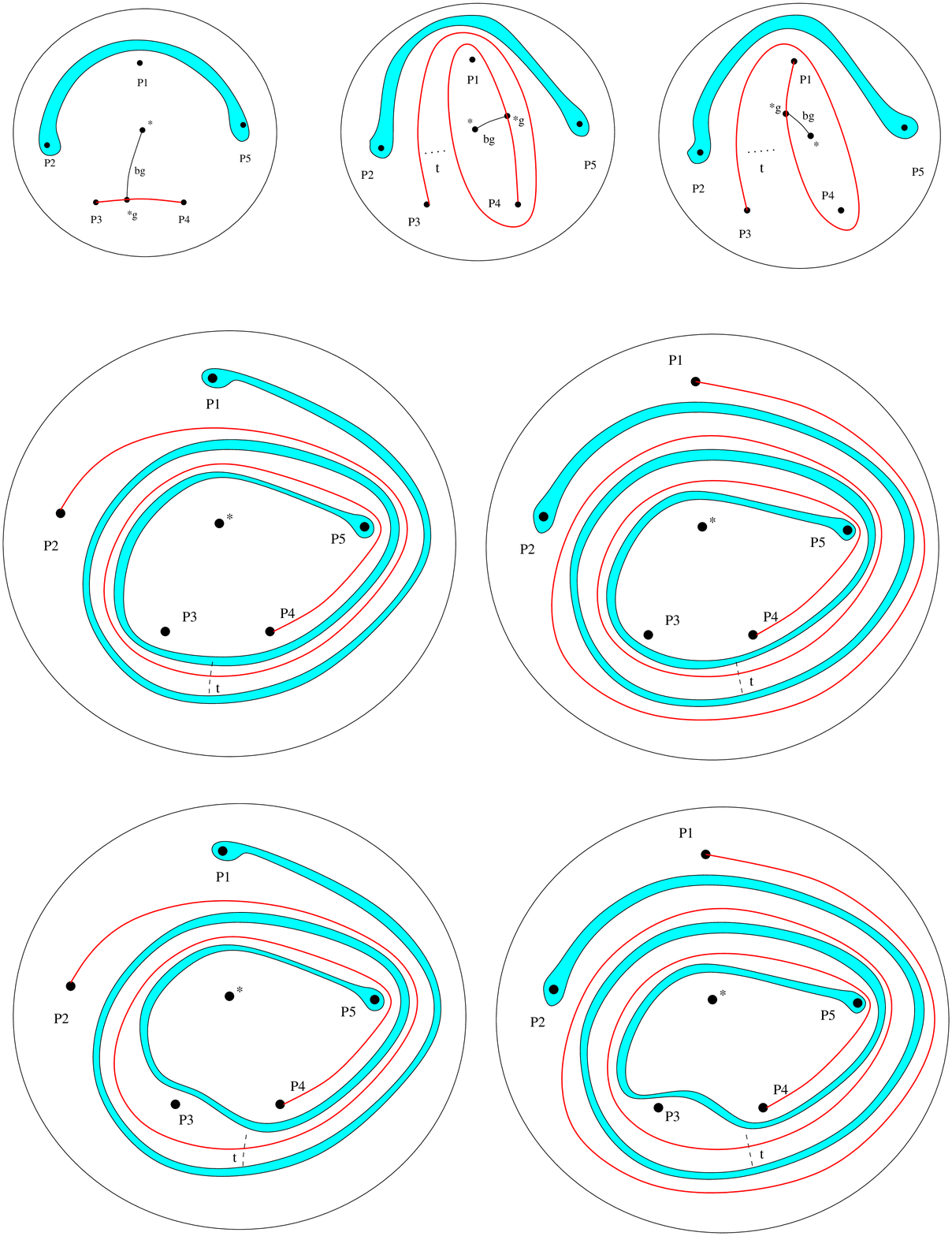}
\caption{Examples for Theorem \ref{thm:Piramides1arc1curve}}
\label{fig:Piramides1curva1arco-II}
 \end{figure}

(b)  To prove this part, given $m$ and $d$, we will find appropriate examples of multicurves $\S=\{\g_1,\g_2\}$  such that $\Im\Phi_{\g_1}$ be isomorphic to $D_m$ and, with the notations introduced in the proof of part (a), so that  $d$ be order of the coset $R\cdot C_m$ in $C_n/C_m$.

i) We consider first  the curves $\g_1,\g_2$ on the top of  Figure \ref{fig:Piramides1curva1arco-II}, where $\g_1$ is   the curve in  one of the three cases of Figure \ref{fig:Piramides1arco}. In Lemma \ref{lem:ImPhi_arco} we showed that $\g_1$ can be chosen so that $\Im\Phi_{\g_1}$ is isomorphic to $D_m$ for any $m$ dividing $n$. Now, just notice that, in the three figures, we can take $S=\rho\sigma$ and $s=\sigma$ or $S=\sigma$, $s=\rho\sigma$.  Thus, $R=Ss=\rho^{\pm 1}$ and the order of $R\cdot C_m$ in   $C_n/C_m$ is  $d=\frac{n/m}{(\frac{n}{m},1)}=\frac{n}{m}$.  Hence,   all the  vertices $V_{1,g}$ are connected by edges forming one cycle.

 \medskip

 ii) Next we consider  the curves $\g_1,\g_2$ on the middle of  Figure \ref{fig:Piramides1curva1arco-II}, where $\g_2$ is the curve $\g$ in  the examples in the proof of Theorem \ref{thm:Piramides1curva}(b).
On the left, we  have $S=\Phi( {\g_1}^a)= \rho\sigma$,  $\Phi( {\g_1}^b)=\rho^t\rho\sigma\rho^{-t}$ and $s=\Phi(z)=\rho\sigma$. So we have:
$$
\Im\Phi_{\g_1}=\langle  \rho\sigma, \rho^t(\rho\sigma)\rho^{-t} \rangle 
=\langle  \rho\sigma,  \rho^{2t} \rangle 
,\quad
\Im\Phi_1=\langle \rho\sigma, \rho^{2t}
 \rangle
$$
Similarly, on the right part of the figure, we have $S=\Phi( {\g_1}^a)= \rho\sigma$,  $\Phi( {\g_1}^b)=\rho^{t+1}\sigma\rho^{-t-1}$ and $s=\Phi(z)=\rho\sigma$.
$$
\Im\Phi_{\g_1}=\langle  \rho\sigma, \rho^{t+1} \sigma \rho^{-t-1} \rangle 
=\langle  \rho\sigma,  \rho^{2t+1} \rangle 
,\quad
\Im\Phi_1=\langle \rho\sigma, \rho^{2t+1}
 \rangle.
$$
Thus, in  both cases we have  $R=e$ and $d=1$.   Since, as varying $t$,  $\Im\Phi_{\g_1}$ is any possible dihedral subgroups of $D_n$, we have that the graph $\G_{\S}$ has $\frac{n}{m}$ vertices with one loop each.

\medskip

iii) Finally we consider  the curves on the bottom of Figure \ref{fig:Piramides1curva1arco-II}. In this case,
$\Im\Phi_{\g_1}$ is as in case i), but now $s= \rho(\rho\sigma)\rho^{-1}=\rho^{3}\sigma$. Thus, $R=Ss= (\rho\sigma)( \rho^3\sigma)=\rho^{-2}$. On the other hand, we have that 
$\Im\Phi_1=\langle  \rho^3\sigma, \rho^{2t}
 \rangle$ for the left-sided figure and $\Im\Phi_1=\langle \rho^3\sigma, \rho^{2t+1}\rangle$ for the left-sided figure.
Thus, varying $t$ we obtain that $\Im\Phi_1 \cong D_m$ for all $m$ dividing $n$. 
Then, the order $d$  of $R$ in   $C_n/C_m$ is 
$$ d=\frac{\frac{n}{m}}{(\frac{n}{m},2)}
=
\left\{ 
\begin{matrix}
n/m & {\rm if} & n/m & \hbox{is odd}
\\
\frac{1}{2}(n/m) & {\rm if} & n/m & \hbox{is even}.
\end{matrix} 
\right.
$$
Hence, if $n/m$ is odd, the vertices $V_{1,g}$ are as in the case ii). While, if $n/m$ is even, the vertices $V_{1,g}$ are joined by edges forming two cycles of length $\frac{1}{2}(n/m)$.
\end{proof}

 \bigskip
 
 
 The previous theorems permit to describe exactly which strata of $\partial \widehat{\M_n}$ are in the limit of $\mathcal P_n$ when $n$ is prime and when $n=4$, whose only nontrivial divisor is 2.
 
\begin{cor}
\label{cor:n=CasiPrimo}
If $n=4$ or if  $n=p $   with $p$ a prime number, then the strata of $\partial \widehat{\mathcal{P}_n}\cap \widehat{\M_n}$ are exactly those described in Theorems \ref{thm:Piramides1arco}, \ref{thm:Piramides2arcos},  \ref{thm:Piramides1curva}, \ref{thm:Piramides1arc1curve}. See Figure \ref{fig:Grafos-n3n4} for the stable graphs describing these strata for the cases $n=3$, $n=4$. 
\end{cor}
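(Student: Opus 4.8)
The plan is to deduce the corollary by combining Theorem~\ref{thm:Existence}(b) with the four structural results of this section. Recall that Theorem~\ref{thm:Existence}(b) produces a surjection $\Psi$ from the set of equivalence classes of admissible multicurves in $\O$ onto the set of strata of $\partial\mathcal{P}_n$, and that $\Psi([\S])$ is precisely the stratum $\mathcal{E}(\G_{\S})\cap\partial\mathcal{P}_n$ carrying the stable graph $\G_{\S}$ determined by $\S$ under $p$. Hence it suffices to enumerate, up to equivalence, the multicurves in the orbifold $\O$ of signature $(0;2,2,2,2,n)$ and to identify which graphs $\G_{\S}$ occur. As already recorded at the beginning of the section, every admissible multicurve in $\O$ is, up to a homeomorphism of $\O$, of one of four types: one arc, two arcs, one closed curve, or one arc together with one closed curve. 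These are exactly the four cases treated in Theorems~\ref{thm:Piramides1arco}, \ref{thm:Piramides2arcos}, \ref{thm:Piramides1curva} and \ref{thm:Piramides1arc1curve}, so the inclusion ``every stratum of $\partial\mathcal{P}_n$ is among those described in these theorems'' is immediate from the surjectivity of $\Psi$.

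For the reverse inclusion I would invoke the converse statements (part (b)) of the same four theorems: each exhibits, for the relevant graph, an actual multicurve $\S$ realizing it, and by the construction of $\Psi$ in Theorem~\ref{thm:Existence}(b) the corresponding stratum $\mathcal{E}(\G_{\S})\cap\partial\mathcal{P}_n$ is non-empty. The only point where the hypothesis ``$n=p$ prime or $n=4$'' enters is here: in Theorem~\ref{thm:Piramides1arc1curve}, part (a) produces, for each divisor $m$ of $n$, a graph depending on a further integer $d$ which is only known to divide $n/m$, whereas part (b) realizes the values $d=n/m$, $d=1$, and --- when $n/m$ is even --- $d=2$. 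So I would finish with the short arithmetic check: for $n=p$ prime one has $n/m\in\{1,p\}$, hence $d\in\{1,p\}$; for $n=4$ one has $n/m\in\{1,2,4\}$, hence $d\in\{1,2,4\}$ (using that $n/m=2$ and $n/m=4$ are even); in all these cases the values of $d$ allowed by part (a) coincide with those realized in part (b). For the other three theorems, parts (a) and (b) already list the same families of graphs (parametrized by a divisor $m$ of $n$, respectively by $k=1,\dots,n$), so no further check is needed. The resulting finite explicit lists of stable graphs for $n=3$ and $n=4$ are the ones displayed in Figure~\ref{fig:Grafos-n3n4}.

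The main obstacle --- and the reason the statement is restricted --- is precisely this parameter $d$ in the arc-plus-closed-curve case: for a general $n$ it is not established that every divisor of $n/m$ is attained, so without the hypothesis the description would remain incomplete. Everything else is bookkeeping: the classification of admissible multicurves in $\O$ into four types, the translation via $\Psi$ between equivalence classes of multicurves and boundary strata, and the observation that surjectivity of $\Psi$ upgrades ``these graphs are realized'' to ``these strata occur''.
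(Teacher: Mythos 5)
Your proposal is correct and follows essentially the same route the paper intends: the paper leaves the proof implicit, remarking only that the relevant point is the divisor structure of $n$ (``whose only nontrivial divisor is $2$''), and your reduction via the surjection $\Psi$, the four-type classification of multicurves, and the arithmetic check that every divisor $d$ of $n/m$ is realized in Theorem~\ref{thm:Piramides1arc1curve}(b) when $n$ is prime or $n=4$ is exactly the intended argument. You also correctly isolate the parameter $d$ in the arc-plus-closed-curve case as the sole reason for the restriction on $n$.
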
 

\psfrag{1}{1}
\psfrag{2}{2}
\psfrag{0}{0}
\psfrag{3}{3}
\psfrag{n=3}{$n=3$}
\psfrag{n=4}{$n=4$}
\psfrag{1arc}{1 arc}
\psfrag{2arcs}{2 arcs}
\psfrag{1arc1curve}{1 arc, 1 c. curve}
\psfrag{1curve}{1 c. curve}
\begin{figure}
\includegraphics[height=8cm,width=15cm]{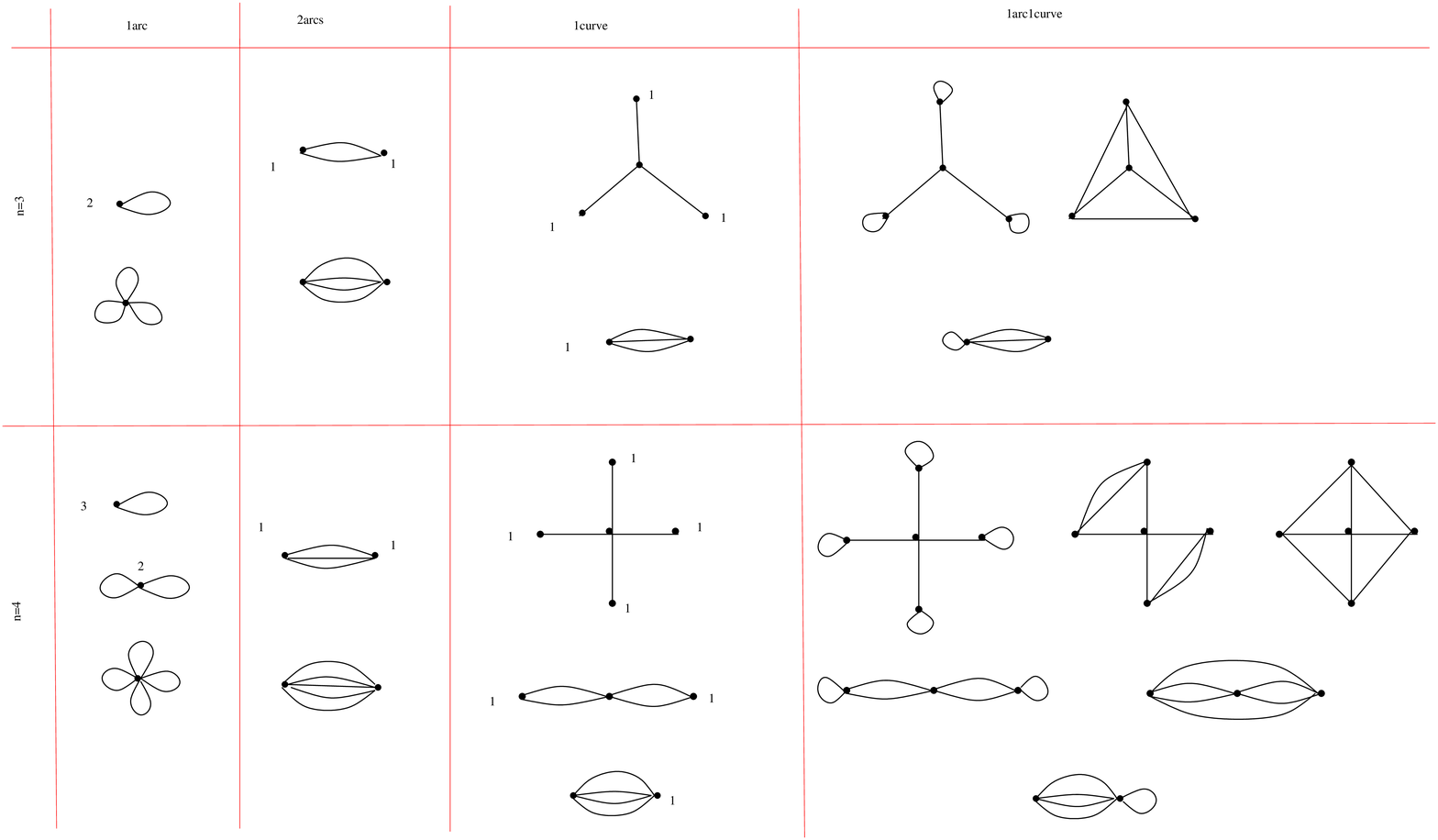} 
 \label{fig:Grafos-n3n4}
 \caption{Stable graphs corresponding to the strata in the boundary of $\mathcal{P}_n$ for $n=3,4$. No number assigned to a vertex means weight equal zero.}
\end{figure}


\end{document}